\tikzset{->-/.style={decoration={
  markings,
  mark=at position .5 with {\arrow{>}}},postaction={decorate}}}
\theoremstyle{definition}
\newtheorem{theorem}{Theorem}[section]
\newtheorem{corollary}[theorem]{Corollary}
\newtheorem{lemma}[theorem]{Lemma}
\newtheorem{proposition}[theorem]{Proposition}
\theoremstyle{remark}
\newtheorem{remark}[theorem]{Remark}
\newcommand{\ee}{\mathrm{e}}
\newcommand{\ii}{\mathrm{i}}
\newcommand{\rmT}{\scalebox{0.5}{$\mathrm T$}}
\numberwithin{equation}{section}
\title[$q$-deformation of random partitions]{$q$-deformation of random partitions,\\ determinantal structure,\\ and Riemann--Hilbert problem} 
\author[Taro Kimura]{Taro Kimura}
\address{Université Bourgogne Europe, CNRS, IMB UMR 5584, Dijon, France}
\begin{document}

\begin{abstract}
We study $q$-deformation of probability measures on partitions, i.e., $q$-deformed random partitions.
We in particular consider the $q$-Plancherel measure and show a determinantal formula for the correlation function using a $q$-deformation of the discrete Bessel kernel.
We also investigate Riemann-Hilbert problems associated with the corresponding orthogonal polynomials and obtain $q$-Painlevé equations from the $q$-difference Lax formalism.
\end{abstract}

\maketitle

\tableofcontents

\section{Introduction and summary}

A partition of a non-negative integer is a ubiquitous object appearing in various combinatorial aspects of mathematics and physics.
Our main interest is a probability distribution of partitions, i.e., random partitions, and its large scale behavior.
The Plancherel measure is a primary example of the probability measure on the set of partitions of size $n$ through the identification of each partition as the irreducible representation of the symmetric group $\mathfrak{S}_n$,
\begin{align}
    \mu_\text{P}[\lambda] = \frac{(\dim \lambda)^2}{n!} \, , \qquad \lambda \vdash n \, ,
\end{align}
where we denote by $\dim \lambda$ the dimension of the irreducible representation of $\mathfrak{S}_n$ labeled by $\lambda$.
We are also interested in the Poissonized Plancherel measure on the set of partitions of arbitrary size, which is a one-parameter extension given by
\begin{align}
    \mu_\text{PP}[\lambda;\eta] = \ee^{-\eta^2} \eta^{2|\lambda|} \left( \frac{\dim \lambda}{|\lambda|!} \right)^2 = \ee^{-\eta^2} \eta^{2|\lambda|} \prod_{(i,j) \in \lambda} \frac{1}{h(i,j)^2} \, ,
\end{align}
where $h(i,j)$ is the hook length associated with the position $(i,j)$ and we denote the size of $\lambda$ by $|\lambda|$.
In this case, there is no restriction on the size of partitions.
For such a probability measure, we are typically interested in the large scale behavior and its universality.
The result presented by Logan and Shepp~\cite{Logan:1977}, and Vershik and Kerov~\cite{Vershik:1977} about the so-called limit shape of random partitions is the pioneering work in this direction.
Moreover, Baik, Deift, and Johansson showed in their seminal work~\cite{Baik:1999JAMS} that the large scale behavior of random permutations is described by the Tracy--Widom distribution of Gaussian Unitary Ensemble (GUE)~\cite{Tracy:1992rf}, which establishes the universality of random partitions in the large scale limit.
Since then, there have been numerous works on stochastic and probabilistic aspects of random partitions in various contexts.

In this paper, we study $q$-deformation of the Poissonized Plancherel measure on the set of partitions $\mathsf{P}$, that we call the $q$-Plancherel measure.
Let $q, \xi \in [0,1)$.
We define two types of the $q$-Plancherel measure,
\begin{subequations}
    \begin{align}
        \text{Squared type:} \quad & \mu_{q\text{PP}_S}[\lambda;\xi,q] = M(\xi;q)^{-1} (\xi^2 q)^{|\lambda|} q^{2b(\lambda)} \prod_{(i,j) \in \lambda} \frac{1}{(1 - q^{h(i,j)})^2} \, , \\
        \text{Mixed type:} \quad & \mu_{q\text{PP}_M}[\lambda;\xi,q] = \exp\left( \frac{\xi^2}{q^{\frac{1}{2}} - q^{-\frac{1}{2}}} \right) \xi^{2|\lambda|} q^{b(\lambda)} \prod_{(i,j) \in \lambda} \frac{1}{h(i,j)(1 - q^{h(i,j)})} \, ,
    \end{align}
\end{subequations}
for $\lambda \in \mathsf{P}$, where $b(\lambda)$ is defined in \eqref{eq:b-partition}, and $M(\xi;q)$ is the modified MacMahon function~\eqref{eq:MacMahon_fn}.
See \S\ref{sec:qP_measure} for more details.
The squared type was introduced by Fulman~\cite{Fulman1997,Fulman2002}, and the mixed type was introduced by Strahov~\cite{Strahov2008}. 

We first point out that both measures are in fact specialization of the multi-variable probability measure on partitions, a.k.a., the Schur measure, introduced by Okounkov~\cite{Okounkov:2001SM} (Proposition~\ref{prop:Schur_to_qPlancherel}).
This identification allows us to show that, in contrast to other $q$-deformation of random partitions, e.g., Macdonald measure, $q$-Whittaker measure~\cite{Borodin:2014PTRF}, the $q$-Plancherel measure of both types are discrete determinantal point processes (Corollary~\ref{cor:dDDP}): All the correlation functions are given by a determinant of the correlation kernel.
In particular, for the squared type measure, the correlation kernel is given as follows.
\begin{theorem}[Theorem~\ref{thm:q-Bessel_kernel}]
    The correlation kernel of the squared type $q$-Plancherel measure is given by the $q$-Bessel kernel,
    \begin{align}
        K_{q\text{B}}(r,s) = \xi \frac{J_{r+\frac{1}{2}}J_{s-\frac{1}{2}} - J_{r-\frac{1}{2}}J_{s+\frac{1}{2}}}{q^{\frac{1}{2}(r-s)} - q^{-\frac{1}{2}(r-s)}} \, , \qquad r,s \in \mathbb{Z} + \frac{1}{2} \, ,
    \end{align}
    where $J_n = J^{(3)}_n(2\xi;q)$ is the Hahn--Exton $q$-Bessel function.
\end{theorem}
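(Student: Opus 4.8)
\emph{Proof outline.} The plan is to transport the determinantal structure from the Schur measure and then run a $q$-deformed Christoffel--Darboux telescoping. By Proposition~\ref{prop:Schur_to_qPlancherel} the squared-type measure $\mu_{q\mathrm{PP}_S}[\,\cdot\,;\xi,q]$ is the Schur measure with both specializations equal to the geometric sequence $\mathbf x=\mathbf y=(\xi q^{1/2},\xi q^{3/2},\xi q^{5/2},\dots)$, since $s_\lambda(\xi q^{1/2},\xi q^{3/2},\dots)=(\xi q^{1/2})^{|\lambda|}q^{b(\lambda)}\prod_{(i,j)\in\lambda}(1-q^{h(i,j)})^{-1}$ and the partition function $\prod_{i,j\ge1}(1-\xi^2q^{\,i+j-1})^{-1}$ is the modified MacMahon function $M(\xi;q)$. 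By Corollary~\ref{cor:dDDP} the configuration $\mathfrak S(\lambda)=\{\lambda_i-i+\tfrac12\}\subset\mathbb Z+\tfrac12$ is determinantal, and Okounkov's contour-integral formula~\cite{Okounkov:2001SM} for the Schur kernel specializes, for this choice, to
\begin{align*}
 K_{q\mathrm B}(r,s)=\frac{1}{(2\pi\ii)^2}\oint\oint\frac{\Phi(z)}{\Phi(w)}\,\frac{1}{\sqrt{zw}\,(z-w)}\,\frac{w^{s}}{z^{r}}\,\mathrm{d}z\,\mathrm{d}w,\qquad \Phi(z)=\prod_{i\ge1}\frac{1-\xi q^{\,i-1/2}z^{-1}}{1-\xi q^{\,i-1/2}z}=\frac{(\xi q^{1/2}z^{-1};q)_\infty}{(\xi q^{1/2}z;q)_\infty},
\end{align*}
the contours being circles with $\xi q^{1/2}<|w|<|z|<\xi^{-1}q^{1/2}$, where $\Phi$ is holomorphic and non-vanishing; note that $r,s\in\mathbb Z+\tfrac12$ makes all exponents of $z,w$ integral.

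Next I would reduce the double integral to a bilinear sum: expanding $(z-w)^{-1}=\sum_{m\ge0}w^mz^{-m-1}$ and integrating term by term gives $K_{q\mathrm B}(r,s)=\sum_{m\ge0}\phi_{r+1/2+m}\,\phi_{s+1/2+m}$, where $\Phi(z)=\sum_{n\in\mathbb Z}\phi_nz^n$ on the annulus (using $\Phi(z^{-1})=\Phi(z)^{-1}$). The coefficients come from multiplying the two Euler expansions of a $q$-exponential, $1/(a;q)_\infty=\sum_{k\ge0}a^k/(q;q)_k$ and $(a;q)_\infty=\sum_{k\ge0}(-1)^kq^{\binom k2}a^k/(q;q)_k$; collecting the coefficient of $z^n$ for $n\ge0$ yields
\begin{align*}
 \phi_n=q^{n/2}\,\xi^{\,n}\sum_{a\ge0}\frac{(-1)^a\,q^{\binom{a+1}{2}}\,\xi^{2a}}{(q;q)_a\,(q;q)_{a+n}}=q^{n/2}\,J^{(3)}_n(2\xi;q),
\end{align*}
the Hahn--Exton $q$-Bessel function (the value for negative index being fixed by the convention $(q;q)_{a+n}^{-1}=0$ for $a+n<0$, i.e.\ the reflection relation of $J^{(3)}$). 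This is the $q$-analogue of $\ee^{\eta(z-z^{-1})}=\sum_n J_n(2\eta)z^n$ underlying the classical discrete Bessel kernel, and writing $J_n:=q^{-n/2}\phi_n=J^{(3)}_n(2\xi;q)$ one gets $K_{q\mathrm B}(r,s)=q^{(r+s+1)/2}\sum_{m\ge0}q^m J_{r+1/2+m}J_{s+1/2+m}$.

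The remaining step is the collapse of this sum. The $q$-shift of $\Phi$ is $\Phi(qz)=(1-\xi q^{-1/2}z^{-1})(1-\xi q^{1/2}z)\,\Phi(z)$, and comparing coefficients of $z^n$ gives $\xi q^{-1/2}\phi_{n+1}+\xi q^{1/2}\phi_{n-1}=(1+\xi^2-q^n)\phi_n$; in terms of $J_n=q^{-n/2}\phi_n$ this becomes the \emph{symmetric} three-term recurrence $\xi(J_{n+1}+J_{n-1})=(1+\xi^2-q^n)J_n$, the $q$-analogue of $\sqrt\theta\,(J_{n+1}+J_{n-1})=nJ_n$. Then, imitating the telescoping behind the classical discrete Bessel kernel and writing $\mu=r+\tfrac12$, $\nu=s+\tfrac12$,
\begin{align*}
 (q^{\nu}-q^{\mu})\sum_{m\ge0}q^m J_{\mu+m}J_{\nu+m}
 &=\sum_{m\ge0}\big[(1+\xi^2-q^{\mu+m})-(1+\xi^2-q^{\nu+m})\big]\,J_{\mu+m}J_{\nu+m}\\
 &=\xi\sum_{m\ge0}\big(E_{m-1}-E_m\big)=\xi\,E_{-1}=\xi\big(J_{\mu-1}J_{\nu}-J_{\mu}J_{\nu-1}\big),
\end{align*}
with $E_m:=J_{\mu+m}J_{\nu+m+1}-J_{\mu+m+1}J_{\nu+m}$, after substituting the recurrence in each factor and using $J_n\to0$. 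Dividing by $q^{\nu}-q^{\mu}=-q^{(\mu+\nu)/2}\big(q^{(\mu-\nu)/2}-q^{-(\mu-\nu)/2}\big)$ and multiplying by the prefactor $q^{(\mu+\nu)/2}=q^{(r+s+1)/2}$ carried by $K_{q\mathrm B}$ leaves exactly $\xi\,(J_{r+1/2}J_{s-1/2}-J_{r-1/2}J_{s+1/2})/(q^{(r-s)/2}-q^{-(r-s)/2})$; the value on the diagonal $r=s$ is the corresponding limit (a $q$-derivative of $\Phi$).

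I expect the main difficulty to lie in bookkeeping rather than in any single estimate: pinning down the exact gauge and index conventions in Okounkov's Schur kernel (in particular the $1/\sqrt{zw}$ versus $\sqrt{zw}$ choice and the half-integer indexing) so that one lands precisely on the stated formula; tracking all normalization constants in $\phi_n=q^{n/2}J^{(3)}_n(2\xi;q)$; and---the genuinely subtle point---recognizing that one must pass to the rescaled functions $J_n=q^{-n/2}\phi_n$ \emph{before} telescoping, because only the rescaled recurrence is symmetric in $J_{n\pm1}$, and that the denominator that appears is the $q$-number $q^{(r-s)/2}-q^{-(r-s)/2}$, which forces the repackaging $q^{\mu}-q^{\nu}=q^{(\mu+\nu)/2}(q^{(\mu-\nu)/2}-q^{-(\mu-\nu)/2})$ and the cancellation of the symmetric prefactor. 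An alternative that bypasses the recurrence is to deform the double contour integral directly using the same $q$-difference equation for $\Phi$, extracting the rank-two ``integrable kernel'' form $(f(r)g(s)-g(r)f(s))/(q\text{-distance})$ in one stroke; this route also connects naturally with the Riemann--Hilbert problem and $q$-difference Lax formalism developed later in the paper.
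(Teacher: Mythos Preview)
Your proposal is correct and follows essentially the same approach as the paper's proof: identify the specialized Schur generating function $\Phi(z)=(\xi q^{1/2}z^{-1};q)_\infty/(\xi q^{1/2}z;q)_\infty$ with the generating function of the Hahn--Exton $q$-Bessel functions (so $\phi_n=q^{n/2}J_n$), write the kernel as the series $q^{(r+s+1)/2}\sum_{m\ge0}q^mJ_{r+1/2+m}J_{s+1/2+m}$, and collapse it via the symmetric three-term recurrence $\xi(J_{n+1}+J_{n-1})=(1+\xi^2-q^n)J_n$ and a telescoping sum. The only cosmetic differences are that you derive the recurrence from the $q$-shift $\Phi(qz)=(1-\xi q^{-1/2}z^{-1})(1-\xi q^{1/2}z)\Phi(z)$ whereas the paper quotes it from the appendix, and you index the sum over $m\ge0$ whereas the paper writes $k\in\mathbb{Z}'_{>0}$; your bookkeeping of the $q^{(\mu+\nu)/2}$ prefactor and the sign in $q^\nu-q^\mu$ is accurate and lands on the stated formula.
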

See \S\ref{sec:q-Bessel} for the details of the Hahn--Exton $q$-Bessel function.
This is a natural $q$-deformation of the well-known discrete Bessel kernel introduced by Borodin, Okounkov, and Olshanski~\cite{Brodin:2000} and Johansson~\cite{Johansson2001}.
We verify that the $q$-Bessel kernel is reduced to the discrete Bessel kernel by putting $\xi = (1 - q)\eta$ and taking the limit $q \to 1$. 
Similar to the discrete Bessel kernel, we can take the scaling limit of the $q$-Bessel kernel to obtain the universal kernels, i.e., the sine kernel and the Airy kernel.
We determine the scaling limit to realize these universal kernels.
From this analysis, we also obtain the limit shape for the $q$-Plancherel measure (Fig.~\ref{fig:prof_fn}).

We then study the gap probability for the $q$-Plancherel measure.
Since the $q$-Plancherel measure is determinantal, the gap probability is given by a discrete version of the Fredholm determinant.
As shown by Borodin and Okounkov~\cite{Borodin2000}, the discrete Fredholm determinant associated with the specific interval is presented by the Toeplitz determinant.
For the squared type measure, we have the following Toeplitz determinant formula.
\begin{proposition}[Propositions~\ref{prop:gap_prob_unitary_matrix}, \ref{prop:Toeplitz_Z}]
    The gap probability with respect to the squared type $q$-Plancherel measure is given by the Toeplitz determinant,
    \begin{align}
        \mathbb{P}_{q\text{PP}_S}[\ell(\lambda) \le N] = \frac{1}{M(\xi;q)} \det_{0 \le i, j \le N - 1} I_{-i + j}
        \, , \quad 
        \mathbb{P}_{q\text{PP}_S}[\lambda_1 \le N] = \frac{1}{M(\xi;q)} \det_{0 \le i, j \le N - 1} \check{I}_{-i + j} \, ,
    \end{align}
    where $I_n = I_n^{(1)}(2 \xi q^{\frac{1}{2}};q)$ and $\check{I}_n = q^{\frac{n^2}{2}} I_n^{(2)}(2\xi;q)$ are the modified $q$-Bessel functions, and $M(\xi;q)$ is the modified MacMahon function.
\end{proposition}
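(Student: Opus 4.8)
The plan is to exploit the identification of the squared type $q$-Plancherel measure with a Schur measure (Proposition~\ref{prop:Schur_to_qPlancherel}): $\mu_{q\text{PP}_S}[\lambda;\xi,q] = M(\xi;q)^{-1}\,s_\lambda(\rho)^2$, where $\rho$ is the principal specialisation $\rho_i=\xi q^{i-\frac12}$ ($i\ge 1$) and $M(\xi;q)=\sum_\lambda s_\lambda(\rho)^2=\prod_{i,j\ge 1}(1-\xi^2 q^{i+j-1})^{-1}$ by the Cauchy identity. Under this identification the two events are $\{\ell(\lambda)\le N\}$ and $\{\lambda_1\le N\}$, so the statement is equivalent to the two shape-restricted evaluations
\[
\sum_{\ell(\lambda)\le N}s_\lambda(\rho)^2=\det_{0\le i,j\le N-1}I_{-i+j}\,,\qquad \sum_{\lambda_1\le N}s_\lambda(\rho)^2=\det_{0\le i,j\le N-1}\check{I}_{-i+j}\,.
\]

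For the first identity I would expand $s_\lambda$ with $\ell(\lambda)\le N$ by the Jacobi--Trudi formula $s_\lambda=\det(h_{\lambda_i-i+j})_{1\le i,j\le N}$, pass to the strictly decreasing indices $m_i=\lambda_i-i+N\ge 0$, and apply the Cauchy--Binet identity in its summation-over-subsets form to the product of two such determinants. This collapses the sum to the Toeplitz determinant $\det_{0\le i,j\le N-1}\big(\sum_{k\ge 0}h_k(\rho)h_{k+i-j}(\rho)\big)$, whose symbol is $\phi(z)=H_\rho(z)H_\rho(z^{-1})$ with $H_\rho(z)=\prod_{i\ge 1}(1-\rho_i z)^{-1}$. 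The second identity is the same computation with the dual Jacobi--Trudi formula in terms of $e_k$, giving the symbol $E_\rho(z)E_\rho(z^{-1})$, $E_\rho(z)=\prod_{i\ge 1}(1+\rho_i z)$. Equivalently one can write this Toeplitz determinant as a $U(N)$ integral of $\prod_i H_\rho(u_i)\overline{H_\rho(u_i)}$ via the Cauchy identity and the Heine--Szeg\H{o} identity --- this is Proposition~\ref{prop:gap_prob_unitary_matrix} --- or route through the determinantal structure of the process together with the Borodin--Okounkov reduction of the discrete Fredholm determinant on a half-line to a Toeplitz determinant, which is the form advertised in the introduction.

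Third I would identify the Fourier coefficients. The $q$-binomial theorem gives $H_\rho(z)=\sum_k (\xi q^{1/2})^k z^k/(q;q)_k$ and $E_\rho(z)=\sum_k \xi^k q^{k^2/2}z^k/(q;q)_k$, hence $h_k(\rho)=\xi^k q^{k/2}/(q;q)_k$ and $e_k(\rho)=\xi^k q^{k^2/2}/(q;q)_k$. Substituting, $[z^n]\phi = \sum_{k\ge 0}h_k(\rho)h_{k+n}(\rho)=\xi^{|n|}q^{|n|/2}\sum_{k\ge 0}\xi^{2k}q^{k}/\big((q;q)_k(q;q)_{k+|n|}\big)$, which matches term by term the series defining $I^{(1)}_n(2\xi q^{\frac12};q)=I_n$; likewise $[z^n]\big(E_\rho(z)E_\rho(z^{-1})\big)=\sum_{k\ge0}e_k(\rho)e_{k+n}(\rho)=q^{n^2/2}I^{(2)}_n(2\xi;q)=\check{I}_n$. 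The manifest $z\leftrightarrow z^{-1}$ symmetry of both symbols gives $I_{-n}=I_n$ and $\check{I}_{-n}=\check{I}_n$, so the entries may be written as $I_{-i+j}$ and $\check{I}_{-i+j}$. Dividing by $M(\xi;q)$ yields the proposition; this coefficient identification is the content of Proposition~\ref{prop:Toeplitz_Z}.

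The only real difficulty is bookkeeping: reconciling the various $q$-Bessel conventions (Jackson's $J^{(1)},J^{(2)}$ versus the Hahn--Exton $J^{(3)}$ of Theorem~\ref{thm:q-Bessel_kernel}, and their modified counterparts), keeping track of the two different arguments $2\xi q^{\frac12}$ and $2\xi$ and of the Gaussian factor $q^{n^2/2}$ in $\check{I}_n$ that originates from $e_k(\rho)$, and making sure that the dictionary ``$\ell(\lambda)\le N\leftrightarrow h$, $\lambda_1\le N\leftrightarrow e$'' is not accidentally transposed. All rearrangements of series are legitimate since $q,\xi\in[0,1)$ makes every series geometrically convergent.
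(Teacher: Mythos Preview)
Your proposal is correct. The route differs slightly from the paper's, and it is worth recording the contrast. The paper first realises the restricted sum as a $U(N)$ integral against the weight $\mathbb{I}(z)=(\xi q^{1/2}z,\xi q^{1/2}z^{-1};q)_\infty^{-1}$ (respectively $\check{\mathbb{I}}$) by expanding $\prod_i \mathbb{I}(z_i)$ with the Cauchy identity and invoking the orthonormality~\eqref{eq:Schur_orthonormal} of Schur polynomials on $\mathbb{T}_N$; only afterwards does it convert the unitary matrix integral to a Toeplitz determinant via Andr\'eief's identity and read off the entries from the generating functions~\eqref{eq:Bessel_gen_fn_mod}. You instead go straight from $\sum_{\ell(\lambda)\le N}s_\lambda(\rho)^2$ to a Toeplitz determinant by Jacobi--Trudi followed by Cauchy--Binet, bypassing the $U(N)$ integral entirely, and then match the entries by explicit evaluation of $h_k(\rho)$ and $e_k(\rho)$ via the $q$-binomial theorem. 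Your approach is marginally more elementary (no Haar-measure orthogonality needed) and makes the origin of the two symbols $H_\rho(z)H_\rho(z^{-1})$ versus $E_\rho(z)E_\rho(z^{-1})$ --- and hence the asymmetry $\lambda_1\leftrightarrow\ell(\lambda)$ --- entirely transparent; the paper's route has the advantage of landing on the unitary-matrix formulation that is later reused for the orthogonal-polynomial analysis in~\S\ref{sec:OPUC}. Your series identifications of $I_n$ and $\check I_n$ are correct and agree with the hypergeometric forms in~\S\ref{sec:q-Bessel}.
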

See \S\ref{sec:q-functions} for the details of the modified MacMahon function and \S\ref{sec:q-Bessel} for the modified $q$-Bessel function.
We remark that, for the $q$-Plancherel measure, the measure is not invariant under the transposition.
Hence, we have two different expressions for the gap probability with respect to $\lambda_1$ and $\ell(\lambda) = \lambda_1^{\rmT}$.

In order to derive the Toeplitz determinant formula, we use the unitary matrix integral and the orthogonal polynomials.
The inner product for the orthogonal polynomials appearing in this context is defined on the integral on the unit circle, which shows an analogous structure to the $q$-orthogonal polynomials as we will see in \S\ref{sec:q-orthogonal_polynomial} (see, e.g., \cite{Ismail2005,Koekoek2010}).
We study the Riemann--Hilbert problem associated with these orthogonal polynomials and also the corresponding linear $q$-difference system based on the Lax formalism:
For a matrix-valued function $\Psi$ constructed from the orthogonal polynomials, we have
\begin{align}
    \Psi_{n+1}(z) = U_n(z) \Psi_n(z) \, , \qquad \Psi_n(qz) = T_n(z) \Psi_n(z) \, .
\end{align}
We call $(U, T)$ the Lax pair associated with the linear $q$-difference system.
Since we have $\Psi_{n+1}(qz) = U_n(qz) T_n(z) \Psi_n(z) = T_{n+1}(z) U_n(z) \Psi_n(z)$, and $\Psi$ is invertible on the complex plane except for a certain set, we have the so-called compatibility condition,
\begin{align}
    U_n(qz) T_n(z) = T_{n+1}(z) U_n(z) \, ,
\end{align}
from which we deduce the following.
\begin{theorem}[Theorems~\ref{thm:qPV1}, \ref{thm:qPV2}]
    For the monic orthogonal polynomials associated with the $q$-Plancherel measure, $\{\pi_n\}_{n \in \mathbb{Z}_{\ge 0}}$ and $\{\check{\pi}_n\}_{n \in \mathbb{Z}_{\ge 0}}$, set $\mathsf{x}_n = \xi^{\frac{1}{2}} q^{\frac{n}{2}} \pi_n(0)$ and $\mathsf{y}_n = (-\xi)^{\frac{1}{2}} q^{-\frac{n}{2}} \check{\pi}_n(0)$.
    The following recurrence relations hold,
    \begin{subequations}
    \begin{align}
        \left( \mathsf{x}_n \mathsf{x}_{n+1} - 1 \right) \left( \mathsf{x}_{n-1} \mathsf{x}_{n} - 1 \right)  & = \frac{(\mathsf{x}_n^2 - \xi)(\mathsf{x}_n^2 - \xi^{-1})}{(1 - \xi^{-1} q^{-n} \mathsf{x}_n^2)} \, , \\
        \left( \mathsf{y}_n \mathsf{y}_{n+1} - 1 \right) \left( \mathsf{y}_{n-1} \mathsf{y}_{n} - 1 \right)  & = \frac{(\mathsf{y}_n^2 + \xi)(\mathsf{y}_n^2 + \xi^{-1})}{(1 + \xi^{-1} q^{-n} \mathsf{y}_n^2)} \, ,
    \end{align}        
    \end{subequations}
    where $\mathsf{x}_0=\xi^{\frac{1}{2}}$ and $\mathsf{y}_0=(-\xi)^{\frac{1}{2}}$.
\end{theorem}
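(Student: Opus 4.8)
The plan is to read the two recurrence relations directly off the compatibility condition $U_n(qz)\,T_n(z) = T_{n+1}(z)\,U_n(z)$ of the Lax pair introduced above, treating both sides as rational $2\times 2$ matrix functions of $z$ and matching their singular parts. First I would write $\Psi_n(z)$ explicitly in the Fokas--Its--Kitaev form built from the monic orthogonal polynomial $\pi_n$, its reciprocal polynomial $\pi_n^\ast$, the associated second-kind (Cauchy transform) functions, and the squared norms $h_n=\langle\pi_n,\pi_n\rangle$. The relation $\Psi_{n+1}(z)=U_n(z)\Psi_n(z)$ then follows from the Szeg\H{o}-type recurrence for $\{\pi_n\}$, with $U_n(z)$ linear in $z$ and with entries expressed through $\pi_n(0)$ (the Verblunsky coefficient, up to sign) and the ratio $h_n/h_{n-1}$. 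The relation $\Psi_n(qz)=T_n(z)\Psi_n(z)$ follows from the first-order $q$-difference ($q$-Pearson) equation $w(qz)=\rho(z)\,w(z)$ satisfied by the unit-circle weight whose Fourier coefficients are the modified $q$-Bessel functions $I_n$; $T_n(z)$ is then forced to be rational in $z$ with poles dictated by $\rho$, and its coefficients are pinned down by matching the $z\to\infty$ and $z\to0$ asymptotics of $\Psi_n(qz)\Psi_n(z)^{-1}$, which again involve only $\pi_n(0)$ and $h_n$.

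With $U_n$ and $T_n$ in hand I would expand the compatibility identity in $z$: equating the residues at the finitely many poles of $\rho$, together with the behavior at $z=0$ and $z=\infty$, produces a closed, finite system of nonlinear first-order $q$-difference equations in $n$ for the pair $(\pi_n(0),h_n)$. Using the standard relation $h_n/h_{n-1}=1-\pi_n(0)^2$ (in the present real, rescaled normalization) to eliminate the norm variables leaves a single second-order $q$-difference equation for $\pi_n(0)$ alone. Substituting $\mathsf{x}_n=\xi^{\frac12}q^{\frac n2}\pi_n(0)$ absorbs the explicit $q^n$- and $\xi$-dependent prefactors, and after collecting terms the equation should take exactly the asserted factorized form; the initial value $\mathsf{x}_0=\xi^{\frac12}$ is immediate from $\pi_0\equiv1$. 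The second recurrence, for $\mathsf{y}_n=(-\xi)^{\frac12}q^{-\frac n2}\check\pi_n(0)$, is obtained by repeating the argument for the polynomials $\{\check\pi_n\}$ attached to the other weight — the one with Fourier coefficients $\check I_n=q^{n^2/2}I_n^{(2)}(2\xi;q)$ — whose $q$-Pearson equation is that of the first weight under $\xi\to-\xi$ together with a Gaussian $q^{n^2/2}$ twist; the twist turns $q^{n/2}$ into $q^{-n/2}$ in the normalization, the substitution $\xi\to-\xi$ flips all the signs in the statement, and $\mathsf{y}_0=(-\xi)^{\frac12}$.

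I expect the principal difficulty to lie in the $q$-difference part rather than the recurrence part. Identifying the correct $q$-Pearson equation for the unit-circle weight assembled from the $q$-Bessel generating function, and then controlling how the Cauchy transform behaves under $z\mapsto qz$ — it does not simply rescale, so one must track the integration contour and the spurious poles introduced by $\rho(z)$ — is what makes $T_n(z)$ come out with the right rational structure; an error here would corrupt every downstream step. The second delicate point is the elimination: the raw compatibility relations form an overdetermined-looking set, and one must choose the right combinations of entries to cancel $h_n$ cleanly and land on the compact two-factor product form rather than an unilluminating equivalent. I would guard both steps by checking that the $q\to1$ limit of the resulting equation reproduces the known discrete Painlev\'e~II recurrence for the Verblunsky coefficients of the Poissonized Plancherel weight, and by matching the first few values of $\mathsf{x}_n$ against $\pi_n(0)$ computed directly from the Toeplitz determinant $\det I_{-i+j}$.
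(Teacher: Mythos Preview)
Your plan is essentially the paper's own route: build $\Psi_n$ in Fokas--Its--Kitaev form, read off $U_n$ from the Szeg\H{o} recurrence and $T_n$ from the $q$-Pearson ratio $\mathbb{I}(qz)/\mathbb{I}(z)$, then extract the recurrence from the compatibility condition and eliminate the norms via $1-x_n^2=\kappa_{n-1}^2/\kappa_n^2$; the second case is indeed obtained by $\xi\mapsto-\xi$.

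One technical point to flag: the leading asymptotics at $z\to0$ and $z\to\infty$ fix only $T_{n;0}$ and $T_{n;2}$, not the middle coefficient $T_{n;1}$, so your sentence that the asymptotics ``pin down'' $T_n$ is slightly optimistic. The paper closes this by exploiting the symmetry $\mathbb{I}(z)=\mathbb{I}(z^{-1})$, which yields an inversion relation $T_n(z)^{-1}=q^{-n}K_nT_n((qz)^{-1})K_n$ and fixes the $(2,2)$ entry of $T_{n;1}$ to be $-q^{1/2}\xi^{-1}$; the remaining entries then fall out of the compatibility equations. You can also recover this from compatibility alone (the system is overdetermined), but without the inversion shortcut the bookkeeping is noticeably heavier. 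For the $\check{\mathbb{I}}$ case, the flip $q^{n/2}\to q^{-n/2}$ in the normalization comes not from the Gaussian twist on the Fourier coefficients but from the fact that $\check{\mathbb{I}}=\mathbb{I}(\cdot;-\xi)^{-1}$ enters $\Phi_n$ inverted, so the roles of the $(1,1)$ and $(2,2)$ corners of $T_{n;2}$ swap.
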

These equations are special cases of the $q$-Painlevé V equation ($q$-P$_\text{V}$).%
\footnote{
Historically, $q$-P$_\text{V}$ had been initially introduced as the discrete Painlevé V (d-P$_\text{V}$)
in~\cite{Ramani:1991zz}.
Afterwards, an appropriate version of d-P$_\text{V}$ was then found, and the equation found in \cite{Ramani:1991zz} is now understood as $q$-P$_\text{V}$. 
See, e.g.,~\cite[\S4]{Grammaticos2004} for details.
}
We have a shifted Toeplitz determinant formula for $\mathsf{x}_n$ and $\mathsf{y}_n$,
\begin{align}
        \mathsf{x}_n = (-q^{\frac{1}{2}})^n \xi^{\frac{1}{2}} \frac{Z_n^{(1)}}{Z_n^{(0)}} \, , \qquad
        \mathsf{y}_n = (-q^{-\frac{1}{2}})^n (-\xi)^{\frac{1}{2}} \frac{\check{Z}_n^{(1)}}{\check{Z}_n^{(0)}} \, ,
\end{align}
where
\begin{align}
    Z_n^{(k)} = \det_{0 \le i, j \le n-1} I_{-i+j-k} \, , \qquad
    \check{Z}_n^{(k)} = \det_{0 \le i, j \le n-1} \check{I}_{-i+j-k} \, .
\end{align}
We remark that a similar determinant solution using the $q$-Bessel function to the $q$-Painlevé III equation ($q$-P$_\text{III}$) was obtained in~\cite{Kajiwara:1995JMP}.
They considered a Casorati type determinant with respect to the $q$-shift of the parameter $\xi$ instead of the index $n$ as we used.
See also a Toeplitz determinant formula for the symmetric form of the $q$-Painlevé IV equation ($q$-P$_\text{IV}$)~\cite{Kajiwara2000}.

Another remark is that the $q$-P$_\text{V}$ was obtained also from other kinds of $q$-orthogonal polynomials for the $q$-Freund weight by Boelen--Smet--Van Assche~\cite{Boelen2010}, for the semi-classical (little) $q$-Laguerre weight by Filipuk--Smet~\cite{Filipuk2015} and Boelen--Van Assche~\cite{Boelen2013}, and for the $q$-Hahn weight by Knizel~\cite{Knizel2016}.
See also a monograph on the connection between the Painlevé equations and orthogonal polynomials~\cite{VanAssche2017}.

We verify that the $q$-Painlevé equation above is reduced to the discrete Painlevé II equation (d-P$_\text{II}$) in the limit $q \to 1$ under the parametrization $\xi = (1 - q)\eta$ as before.
This behavior is consistent with the well-known result for the relation between the ordinary Plancherel measure to the discrete Painlevé equation by Baik~\cite{Baik2003}, Borodin~\cite{Borodin:2003Duke}, and Adler and van Moerbeke~\cite{Adler:2003CMP}.

\subsubsection*{Notations}

We denote by $\mathsf{P}$ a set of partitions,  $\lambda = (\lambda_1 \ge \lambda_2 \ge \cdots \ge \lambda_\ell > 0)$, where $\ell$ is the length of $\lambda$ denoted by $\ell(\lambda)$.
We denote by $\mathsf{P}_N$ the set of partitions whose length is less than $N$, $\mathsf{P}_N = \{ \lambda \in \mathsf{P} \mid \ell(\lambda) \le N\}$, i.e., $\mathsf{P} = \lim_{N \to \infty} \mathsf{P}_N$.
We denote the transposition of $\lambda$ by $\lambda^{\rmT}$.
Then, we have $\ell(\lambda) = \lambda_1^{\rmT}$.
The size of a partition $\lambda$ is denoted by $|\lambda| = \sum_{i=1}^{\ell(\lambda)} \lambda_i$, i.e., $\lambda \vdash n \iff |\lambda| = n$.
Note that $\mathsf{P}_N \neq \{\lambda \in \mathsf{P} \mid |\lambda| = N \}$.
For a partition $\lambda$, we define
\begin{align}
    b(\lambda) = \sum_{i=1}^{\ell(\lambda)} (i-1) \lambda_i \, . \label{eq:b-partition}
\end{align}
For $(i,j) \in \lambda$, we define a hook length,
\begin{align}
    h(i,j) = \lambda_i + \lambda_j^{\rmT} - i - j + 1 \, .
\end{align}

\subsubsection*{Acknowledgment}

This work has received financial support from the CNRS through the MITI interdisciplinary programs, EIPHI Graduate School (No. ANR-17-EURE-0002) and Bourgogne-Franche-Comté region.
We are grateful to the anonymous referees for their constructive comments and suggestions.

\section{$q$-Plancherel measure}\label{sec:qP_measure}

The Plancherel measure is a probability measure on the set of irreducible unitary representations of a locally compact group $G$.
For the symmetric group $G = \mathfrak{S}_n$, irreducible representations are parametrized by partitions $\lambda \vdash n$, which allows us to interpret the corresponding Plancherel measure as a probability measure on the set of partitions,
\begin{align}
    \mu_\text{P}[\lambda] = \frac{(\dim \lambda)^2}{n!} \, , \qquad \lambda \vdash n \, ,
\end{align}
where we denote by $\dim \lambda$ the dimension of the irreducible representation characterized by $\lambda$, which agrees with the number of standard Young tableaux of shape $\lambda$.
There is a combinatorial formula to compute the dimension, which is called the hook length formula,
\begin{align}
    \frac{\dim \lambda}{n!} = \prod_{(i,j) \in \lambda} \frac{1}{h(i,j)} \, . \label{eq:hook_length_formula}
\end{align}
Then, the normalization condition of the Plancherel measure is ensured by a combinatorial identity obtained via the Robinson--Schensted correspondence between permutations and pairs of standard Young tableaux associated with the same shape,
\begin{align}
    \sum_{\lambda \vdash n} (\dim \lambda)^2 = n! \, .
\end{align}
This is called the Frobenius--Young identity.

The Poissonized Plancherel measure is a one-parameter deformation of the Plancherel measure obtained via Poissonization and defined on the whole set of partitions,
\begin{align}
    \mu_\text{PP}[\lambda] = \mu_\text{PP}[\lambda;\eta] = \ee^{-\eta^2} \eta^{2|\lambda|} \left( \frac{\dim \lambda}{|\lambda|!} \right)^2 \, , \qquad \lambda \in \mathsf{P} \, .
    \label{eq:PP_measure}
\end{align}

Let us introduce two types of $q$-deformation of the (Poissonized-)Plancherel measure.
Set $q, \xi \in [0,1)$ in the following.%
\footnote{%
Most of the following discussions are based on algebraic computations, which still holds for $q, \xi \in \mathbb{C}^\times$ under the condition $|q|, |\xi| < 1$.
}

\subsubsection*{Squared type}
The first $q$-deformation is given as follows,
\begin{align}
    \mu_{q\text{PP}_S}[\lambda] = \mu_{q\text{PP}_S}[\lambda;\xi,q] = M(\xi;q)^{-1} (\xi^2 q)^{|\lambda|} q^{2b(\lambda)} \prod_{(i,j) \in \lambda} \frac{1}{(1 - q^{h(i,j)})^2} \, ,
\end{align}
where $M(\xi;q)$ is the modified MacMahon function~\eqref{eq:MacMahon_fn}.
We call this the squared type $q$-Plancherel measure.
This measure was introduced by Fulman~\cite[\S2.9]{Fulman1997} as a specialization of the measure based on the Macdonald polynomial, which is a further deformation of the Schur measure presented in \S\ref{sec:Schur_measure}.
As in the case of the Schur polynomial, the Macdonald polynomial has the corresponding the Cauchy identity, from which one may define a measure based on the Macdonald polynomial.

This measure has been also studied in the context of supersymmetric gauge theory and topological string theory~\cite{Nekrasov:2002qd,Nekrasov:2003rj,Okounkov:2003sp}.
In fact, the modified MacMahon function has a two-fold interpretation as the partition function of five-dimensional $\mathcal{N}=1$ supersymmetric $\mathrm{U}(1)$ gauge theory on $\mathbb{C}^2 \times \mathbb{S}^1$ and the topological string partition function of the toric Calabi--Yau 3-fold, $X = \operatorname{Tot}_{\mathbb{P}^1}(\mathcal{O}(0)\oplus\mathcal{O}(-2))$. 

\subsubsection*{Mixed type}
The second $q$-deformation is given as follows,
\begin{align}
    \mu_{q\text{PP}_M}[\lambda] = \mu_{q\text{PP}_M}[\lambda;\xi,q] = \exp\left( \frac{\xi^2}{q^{\frac{1}{2}} - q^{-\frac{1}{2}}} \right) \xi^{2|\lambda|} q^{b(\lambda)} \prod_{(i,j) \in \lambda} \frac{1}{h(i,j)(1 - q^{h(i,j)})} \, .
\end{align}
We call this the mixed type $q$-Plancherel measure.
This measure, a special case of the so-called knot measure~\cite[Chapter~3, \S4]{Kerov2003} (see also an earlier work~\cite{Kerov1993}), was introduced by Strahov~\cite[\S2.2]{Strahov2008}.
In fact, Strahov considered the de-Poissonized version of this measure, from the point of view of the Iwahori--Hecke algebra, which is a $q$-deformation of the symmetric group.
He also showed that this measure arises from the non-uniform random permutations through the Robinson--Schensted--Knuth correspondence.

\begin{proposition}
    We have
\begin{align}
    \lim_{q \to 1} \mu_{q\text{PP}_S}[\lambda;(1-q)\eta;q] = \lim_{q \to 1} \mu_{q\text{PP}_M}[\lambda;(1-q)^{\frac{1}{2}} \eta;q]  = \mu_\text{PP}[\lambda;\eta] \, .
\end{align}
\end{proposition}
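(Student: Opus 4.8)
The plan is to establish the two limits separately, and in each case cell by cell: for a fixed partition $\lambda\in\mathsf{P}$ the hook product runs over exactly $|\lambda|$ cells, and the only analytic input on the combinatorial weight is the elementary limit $\lim_{q\to1}\frac{1-q^{m}}{1-q}=m$ for $m\in\mathbb{Z}_{>0}$, applied to $m=h(i,j)$. Thus, once the powers of $1-q$, $q$ and $\xi$ are matched against the $|\lambda|$ hook factors, the $q$-hook product degenerates to the classical hook product $\prod_{(i,j)\in\lambda}h(i,j)^{-1}$ of the hook length formula~\eqref{eq:hook_length_formula}, and the statement reduces to controlling the normalisation as $q\to1$.

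For the squared type, put $\xi=(1-q)\eta$ and group the factor $(1-q)^{2|\lambda|}$ coming from $(\xi^2q)^{|\lambda|}$ with the $|\lambda|$ squared hook factors:
\begin{align}
    (\xi^2 q)^{|\lambda|}\prod_{(i,j)\in\lambda}\frac{1}{(1-q^{h(i,j)})^2}
    &= q^{|\lambda|}\,\eta^{2|\lambda|}\prod_{(i,j)\in\lambda}\left(\frac{1-q}{1-q^{h(i,j)}}\right)^{2} \\
    &\longrightarrow\ \eta^{2|\lambda|}\prod_{(i,j)\in\lambda}\frac{1}{h(i,j)^2}
    \qquad (q\to1) \, ,
\end{align}
while $q^{2b(\lambda)}\to1$. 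Hence $\mu_{q\text{PP}_S}[\lambda;(1-q)\eta;q]$ converges to $\big(\lim_{q\to1}M((1-q)\eta;q)\big)^{-1}\eta^{2|\lambda|}\prod_{(i,j)\in\lambda}h(i,j)^{-2}$, which by~\eqref{eq:hook_length_formula} is $\mu_\text{PP}[\lambda;\eta]$ once we show $M((1-q)\eta;q)\to\ee^{\eta^2}$.

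For the mixed type, put $\xi=(1-q)^{\frac{1}{2}}\eta$, so $\xi^2=(1-q)\eta^2$. The prefactor is elementary: since $q^{\frac{1}{2}}-q^{-\frac{1}{2}}=-q^{-\frac{1}{2}}(1-q)$ one gets $\exp\!\big(\xi^2/(q^{\frac{1}{2}}-q^{-\frac{1}{2}})\big)=\exp(-q^{\frac{1}{2}}\eta^2)\to\ee^{-\eta^2}$. Distributing $(1-q)^{|\lambda|}$ among the $|\lambda|$ hook factors,
\begin{align}
    \xi^{2|\lambda|}\prod_{(i,j)\in\lambda}\frac{1}{h(i,j)(1-q^{h(i,j)})}
    &= \eta^{2|\lambda|}\prod_{(i,j)\in\lambda}\frac{1}{h(i,j)}\cdot\frac{1-q}{1-q^{h(i,j)}} \\
    &\longrightarrow\ \eta^{2|\lambda|}\prod_{(i,j)\in\lambda}\frac{1}{h(i,j)^2}
    \qquad (q\to1) \, ,
\end{align}
and $q^{b(\lambda)}\to1$; combined with the hook length formula this is exactly $\mu_\text{PP}[\lambda;\eta]$, which finishes the mixed case.

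The remaining and only delicate point is the limit $M((1-q)\eta;q)\to\ee^{\eta^2}$. Taking the logarithm of the product representation of the modified MacMahon function gives $\log M(\xi;q)=\sum_{k\ge1}\frac{\xi^{2k}}{k}\frac{q^{k}}{(1-q^{k})^{2}}$; with $\xi^2=(1-q)^2\eta^2$ and $1-q^{k}=(1-q)(1+q+\cdots+q^{k-1})$ the $k$-th term becomes $\frac{\eta^{2k}}{k}(1-q)^{2k-2}\frac{q^{k}}{(1+q+\cdots+q^{k-1})^{2}}$, which tends to $\eta^2$ for $k=1$ and to $0$ for $k\ge2$. Taking $q$ close enough to $1$ that $(1-q)\eta<\tfrac12$ bounds the $k$-th term by $\eta^2(1/4)^{k-1}/k$ uniformly, so dominated convergence passes the limit inside the sum and yields $\log M\to\eta^2$. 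I expect this interchange of limit and summation to be the one step genuinely requiring care; everything else is the termwise degeneration of the $q$-hook product. Alternatively, one can avoid computing $M$ directly: by Proposition~\ref{prop:Schur_to_qPlancherel} both measures are Schur measures, whose normalisations are Cauchy kernels, and since $\sum_{\lambda\in\mathsf{P}}\mu_\text{PP}[\lambda;\eta]=1$ the constant $\lim_{q\to1}M((1-q)\eta;q)=\ee^{\eta^2}$ can be pinned down \emph{a posteriori} by a Scheffé-type argument on the partition sum using the cellwise limits established above.
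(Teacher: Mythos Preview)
Your proof is correct and follows essentially the same approach as the paper: split each measure into the normalisation and the hook product, use $\frac{1-q}{1-q^{h(i,j)}}\to\frac{1}{h(i,j)}$ cell by cell, and check $M((1-q)\eta;q)\to\ee^{\eta^2}$ via the plethystic-exponential form of $M$. If anything you are more careful than the paper, which simply asserts the MacMahon limit after invoking Proposition~\ref{prop:MacMahon_PE}, whereas you supply the dominated-convergence justification explicitly.
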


\begin{proof}
Let us first consider the squared type measure $\mu_{q\text{PP}_S}$. 
By Proposition~\ref{prop:MacMahon_PE}, we have
\begin{align}
    M((1-q)\eta,q)^{-1} = \exp \left( - \sum_{n \ge 1} \frac{(1-q)^{2n} \eta^{2n}}{n(q^{\frac{n}{2}} - q^{-\frac{n}{2}})^2} \right) \xrightarrow{q \to 1} \ee^{-\eta^2} \, ,
\end{align}
which agrees with the normalization of $\mu_\text{PP}$.
Then, we have
\begin{align}
    \left( (1-q) \eta \right)^{2|\lambda|} \prod_{(i,j) \in \lambda} \frac{1}{(1-q^{h(i,j)})^2} = \eta^{2|\lambda|} \prod_{(i,j) \in \lambda} \left( \frac{1-q}{1-q^{h(i,j)}} \right)^2 \xrightarrow{q \to 1} \eta^{2|\lambda|} \prod_{(i,j) \in \lambda} \frac{1}{h(i,j)^2} \, .
\end{align}
By the hook length formula~\eqref{eq:hook_length_formula}, this agrees with the $\lambda$-dependent part of $\mu_\text{PP}$.
The remaining $q$-factors become trivial in the limit, $\lim_{q \to 1} q^{|\lambda|} q^{2b(\lambda)} = 1$.
This completes the proof for the squared type measure.

For the mixed type measure $\mu_{q\text{PP}_M}$, we have
\begin{align}
    \left.\exp \left( \frac{\xi^2}{q^{\frac{1}{2}} - q^{-\frac{1}{2}}} \right)\right|_{\xi = (1-q)^{\frac{1}{2}}\eta} = \exp \left( \frac{(1-q)\eta^2}{q^{\frac{1}{2}} - q^{-\frac{1}{2}}} \right) \xrightarrow{q \to 1} \ee^{-\eta^2} \, ,
\end{align}
which agrees with the normalization of $\mu_\text{PP}$.
The $\lambda$-dependent part is then given by
\begin{align}
    \left( (1-q)^{\frac{1}{2}} \eta \right)^{2|\lambda|} \prod_{(i,j) \in \lambda} \frac{1}{h(i,j)(1 - q^{h(i,j)})} = \eta^{2|\lambda|} \prod_{(i,j) \in \lambda} \frac{1-q}{h(i,j)(1 - q^{h(i,j)})} \xrightarrow{q \to 1} \eta^{2|\lambda|} \prod_{(i,j) \in \lambda} \frac{1}{h(i,j)^2} \, .
\end{align}
The remaining $q$-factors become trivial in the limit, $\lim_{q \to 1} q^{b(\lambda)} = 1$, and hence we conclude the proof.
\end{proof}

\subsection{Reduction from Schur measure}\label{sec:Schur_measure}

The Schur measure is a multi-variable probability measure on partitions introduced by Okounkov~\cite{Okounkov:2001SM}.
We first summarize the Schur measure and show its relation to the $q$-Plancherel measure.

\subsubsection*{Schur function}

Let $s_\lambda(X)$ be a Schur function of (infinitely many) variables $X = (x_1,x_2,\ldots)$ associated with a partition $\lambda$.
We also use the notation,
\begin{align}
    s_\lambda(\{ t_n \}) = s_\lambda(X) 
    \quad \text{where} \quad t_n = \frac{1}{n} \sum_{i \ge 1} x_i^n \, .
\end{align}
The Cauchy identities for the Schur functions read
\begin{subequations}\label{eq:Cauchy_ids}
    \begin{align}
        \sum_{\lambda \in \mathsf{P}} s_\lambda(X) s_\lambda(Y) & = \prod_{i,j \ge 1} (1 - x_i y_j)^{-1} = \exp \left( \sum_{n \ge 1} \frac{1}{n} \tr X^n \tr Y^n \right) \, , \\
        \sum_{\lambda \in \mathsf{P}} s_{\lambda^{\rmT}}(X) s_\lambda(Y) & = \prod_{i,j \ge 1} (1 + x_i y_j) = \exp \left( - \sum_{n \ge 1} \frac{(-1)^n}{n} \tr X^n \tr Y^n \right) \, ,
    \end{align}
\end{subequations}
where we write $\tr X^n = \sum_{i \ge 1} x_i^n$ and $\tr Y^n = \sum_{i \ge 1} y_i^n$.

Set $\mathbb{T}_N=\left\{ (z_1,\ldots,z_N) \in (\mathbb{C}^\times)^N \mid |z_i|=1, i = 1, \ldots, N \right\}$.
The Schur function of $N$-variable (Schur polynomial) has the following orthonormal property,
\begin{align}
    \frac{1}{N!} \oint_{\mathbb{T}_N} s_\lambda(z_1,\ldots,z_N) \overline{s_{\lambda'}(z_1,\ldots,z_N)} \prod_{1 \le i < j \le N} |z_i - z_j|^2 \prod_{i=1}^N \frac{\dd{z}_i}{2 \pi \ii z_i} = \delta_{\lambda,\lambda'} \, .
    \label{eq:Schur_orthonormal}
\end{align}
We remark that $s_\lambda(z_1, \ldots,z_N) = 0$ if $\ell(\lambda) > N$.

\subsubsection*{Schur measure}
Let $x_i, y_i \in [0,1)$ for $i \in \mathbb{N}$.
The Schur measure on the set of partitions is defined using the Schur function,
\begin{align}
    \mu_\text{S}[\lambda] = \mu_\text{S}[\lambda;X,Y] = \frac{1}{Z(X,Y)} s_\lambda(X) s_\lambda(Y) \, , 
\end{align}
where the normalization factor is given by
\begin{align}
    Z(X,Y) = \sum_{\lambda \in \mathsf{P}} s_\lambda(X) s_\lambda(Y) = \prod_{i, j \ge 1} (1 - x_i y_j)^{-1} \, .
\end{align}
The parameters $(x_i,y_i)_{i \in \mathbb{N}}$ can be in general complex: We need a sufficient condition $X = \overline{Y}$ so that $\mu_\text{S}[\lambda] \ge 0$ for any $\lambda$.
We also use the notation,
\begin{align}
    \mu_\text{S}[\lambda;t,\tilde{t}] = \mu_\text{S}[\lambda;X,Y]
    \qquad \text{where} \qquad
    t_n = \frac{1}{n} \sum_{i \ge 1} x_i^n \, , \quad 
    \tilde{t}_n = \frac{1}{n} \sum_{i \ge 1} y_i^n \, .
\end{align}

\subsubsection*{Correlation function}
Let us consider the correlation function for the Schur measure.
For a partition $\lambda \in \mathsf{P}$, we define the boson-fermion map,
\begin{align}
    \Xi(\lambda) = \left\{ \lambda_i - i + \frac{1}{2} \right\}_{i \in \mathbb{N}} \subset \mathbb{Z} + \frac{1}{2} =: \mathbb{Z}' \, .
\end{align}
For a finite subset $\mathsf{Z} \subset \mathbb{Z}'$, we define the correlation function,
\begin{align}
    \rho(\mathsf{Z}) = \mathbb{P}_\text{S}[\mathsf{Z} \subset \Xi(\lambda)] \, ,
\end{align}
where we denote the probability with respect to the Schur measure by $\mathbb{P}_\text{S}[\cdot]$.
When $|\mathsf{Z}| = k$, it is called the $k$-point correlation function.

\begin{theorem}[Okounkov~\cite{Okounkov:2001SM}]
    For $z_1,\ldots,z_k \in \mathbb{Z}'$, the $k$-point correlation function of the Schur measure is given by a determinant of the form,
    \begin{align}
        \rho(z_1,\ldots,z_k) = \det_{1 \le i, j \le k} K(z_i,z_j) \, .
    \end{align}
    The correlation kernel is given by
    \begin{align}
        K(r,s) = \frac{1}{(2 \pi \ii)^2} \oint_{|z| > |w|} \frac{\mathbb{J}(z;t,\tilde{t}) \mathbb{J}(w;\tilde{t},t)}{z - w} \frac{\dd{z} \dd{w}}{z^{r+\frac{1}{2}} w^{-s+\frac{1}{2}}}
        \, , \label{eq:Schur_corr_kernel_int}
    \end{align}
    where the integration contour counterclockwise encircles the origin keeping the relation $|z| > |w|$ with the auxiliary function,
    \begin{align}
        \mathbb{J}(z;t,\tilde{t}) = \exp \left( \sum_{n \ge 1} ( t_n z^n - \tilde{t}_n z^{-n} ) \right) \, . \label{eq:J_aux_fn}
    \end{align}
    Therefore, the Schur measure is a discrete determinantal point process.
\end{theorem}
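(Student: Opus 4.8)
The plan is to realize the Schur measure inside the infinite wedge (fermionic Fock space) and to deduce the determinantal formula from Wick's theorem for free fermions; this is Okounkov's original strategy. First I would fix the charged free fermions $\psi_k,\psi^*_k$ ($k\in\mathbb{Z}'$) with $\{\psi_k,\psi^*_l\}=\delta_{k,l}$, acting on the charge-zero subspace whose orthonormal basis $\{|\lambda\rangle\}_{\lambda\in\mathsf{P}}$ is indexed through the boson--fermion map $\Xi$, together with the bosonic currents $\alpha_{\pm n}$ ($n\ge1$) and the half-vertex operators $\Gamma_+(t)=\exp\!\big(\sum_{n\ge1}t_n\alpha_n\big)$, $\Gamma_-(\tilde t)=\exp\!\big(\sum_{n\ge1}\tilde t_n\alpha_{-n}\big)$. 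The three standard inputs I would quote are: (i) the Schur function is a vertex-operator matrix element, $s_\lambda(t)=\langle\emptyset|\Gamma_+(t)|\lambda\rangle$ and $s_\lambda(\tilde t)=\langle\lambda|\Gamma_-(\tilde t)|\emptyset\rangle$; (ii) the Heisenberg relation $\Gamma_+(t)\Gamma_-(\tilde t)=\exp\!\big(\sum_{n\ge1}n\,t_n\tilde t_n\big)\,\Gamma_-(\tilde t)\Gamma_+(t)$; and (iii) by the Cauchy identity~\eqref{eq:Cauchy_ids}, $Z(X,Y)=\langle\emptyset|\Gamma_+(t)\Gamma_-(\tilde t)|\emptyset\rangle$.

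Next I would encode the event $z\in\Xi(\lambda)$ by an occupation-number operator, the fermion bilinear $\psi_z\psi^*_z$, which acts on $|\lambda\rangle$ as the identity when $z\in\Xi(\lambda)$ and as zero otherwise. As these operators are diagonal in the basis $\{|\lambda\rangle\}$, inserting the completeness relation $\sum_\lambda|\lambda\rangle\langle\lambda|=\mathrm{id}$ turns the correlation function into a single vacuum expectation,
\[
\rho(z_1,\dots,z_k)=\frac{1}{Z(X,Y)}\,\Big\langle\emptyset\Big|\,\Gamma_+(t)\,\Big(\prod_{i=1}^{k}\psi_{z_i}\psi^*_{z_i}\Big)\,\Gamma_-(\tilde t)\,\Big|\emptyset\Big\rangle.
\]
I would then conjugate the fermion fields through the vertex operators: from $\Gamma_+(t)\psi(z)\Gamma_+(t)^{-1}=\exp\!\big(\sum_{n\ge1}t_nz^n\big)\psi(z)$ and the analogous identities for $\psi^*$ and for $\Gamma_-$ (with $z^{-n}$), one slides $\Gamma_+(t)$ rightward and $\Gamma_-(\tilde t)$ leftward past the bilinears and past each other. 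The scalar produced by the $\Gamma_+$--$\Gamma_-$ commutation equals $\langle\emptyset|\Gamma_+(t)\Gamma_-(\tilde t)|\emptyset\rangle=Z(X,Y)$ and cancels the normalization, while the scalars produced by sliding the vertex operators past the fermions assemble exactly into the dressing factors $\mathbb{J}(\,\cdot\,;t,\tilde t)^{\pm1}$ of~\eqref{eq:J_aux_fn}. What is left is the pure vacuum expectation of a product of dressed free fermions, $\rho(z_1,\dots,z_k)=\langle\emptyset|\prod_{i=1}^{k}\widetilde\psi(z_i)\,\widetilde\psi^*(z_i)|\emptyset\rangle$. Then Wick's theorem for free fermions expresses this as $\det_{1\le i,j\le k}\langle\emptyset|\widetilde\psi(z_i)\widetilde\psi^*(z_j)|\emptyset\rangle$, with a kernel independent of $k$. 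It only remains to evaluate this two-point function: the bare propagator $\langle\emptyset|\psi(v)\psi^*(w)|\emptyset\rangle$ is a geometric sum over the filled Dirac sea, convergent and rational for $|v|>|w|$; multiplying by the dressings $\mathbb{J}(v;t,\tilde t)\,\mathbb{J}(w;\tilde t,t)$ and extracting the coefficient of $v^{-r-\frac12}w^{s-\frac12}$ — equivalently, performing the double contour integral over $|v|>|w|$ encircling the origin — reproduces~\eqref{eq:Schur_corr_kernel_int} with $(r,s)=(z_i,z_j)$. Since every $k$-point function is the $k\times k$ determinant of one fixed kernel $K$, the Schur measure is a determinantal point process on $\mathbb{Z}'$.

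The step I expect to be the main obstacle is the bookkeeping in the conjugation step: one must track correctly the scalar prefactors generated both when commuting the vertex operators past the fermion bilinears and when commuting $\Gamma_+$ past $\Gamma_-$, and handle the normal-ordering subtlety that $\psi_z\psi^*_z$ is not normal ordered, so that Wick's theorem must be applied relative to the filled Dirac sea with the corresponding self-contraction convention; one must also justify interchanging the infinite bosonic exponentials with the fermionic vacuum pairings and control the convergence of the generating series that legitimizes the passage to the contour-integral form and the ordering $|z|>|w|$. An alternative, more hands-on route would identify the Schur measure with a biorthogonal ensemble using the Jacobi--Trudi determinant and~\eqref{eq:Schur_orthonormal} and invoke the Eynard--Mehta theorem, but the fermionic argument is the most economical and is the one best suited to the infinite-wedge constructions used in the rest of the paper.
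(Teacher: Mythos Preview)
The paper does not prove this theorem: it is quoted as a result of Okounkov~\cite{Okounkov:2001SM} and used as a black box, so there is no ``paper's own proof'' to compare against. Your sketch is precisely Okounkov's original infinite-wedge argument and is essentially correct in outline.

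One small point of notation to clean up: midway through you pass from the modes $\psi_{z_i}\psi^*_{z_i}$ (with $z_i\in\mathbb{Z}'$) to the generating fields $\psi(v),\psi^*(w)$ (with $v,w$ complex spectral parameters), and then write the dressed expectation as $\langle\emptyset|\prod_i\widetilde\psi(z_i)\widetilde\psi^*(z_i)|\emptyset\rangle$, which conflates the two. In practice one either (a) defines dressed modes $\Psi_r=G\,\psi_r\,G^{-1}$ with $G=\Gamma_-(\tilde t)^{-1}\Gamma_+(t)$ and applies Wick's theorem directly to $\langle\emptyset|\prod_i\Psi_{z_i}\Psi^*_{z_i}|\emptyset\rangle$, computing the two-point function $\langle\emptyset|\Psi_r\Psi^*_s|\emptyset\rangle$ by passing to fields, or (b) inserts the contour-integral representations $\psi_r=\oint\psi(v)v^{r-\frac12}\frac{\dd v}{2\pi\ii}$ from the outset. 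Either way the computation lands on~\eqref{eq:Schur_corr_kernel_int}; just keep the half-integer indices and the spectral parameters notationally distinct.
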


\begin{proposition}\label{prop:Schur_kernel_series}
    The correlation kernel of the Schur measure has the series expansion,
    \begin{align}
        K(r,s) = \sum_{k \in \mathbb{Z}_{>0}} J_{r+k} \tilde{J}_{s+k} \, ,
    \end{align}
    with 
    \begin{align}
        J_n = \frac{1}{2 \pi \ii} \oint \mathbb{J}(z;t,\tilde{t}) \frac{\dd{z}}{z^{n+1}} \, , \qquad 
        \tilde{J}_n = \frac{1}{2 \pi \ii} \oint \mathbb{J}(z;\tilde{t},t) \frac{\dd{z}}{z^{n+1}} \, ,
    \end{align}
    where the integration contour counterclockwise encircles the origin. 
\end{proposition}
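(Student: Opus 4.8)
The plan is to read off the series expansion directly from the double-contour formula \eqref{eq:Schur_corr_kernel_int} by expanding the Cauchy-type factor $(z-w)^{-1}$ as a geometric series and then integrating term by term. The contour prescription in \eqref{eq:Schur_corr_kernel_int} keeps $|z| > |w|$, so one has
\begin{align}
    \frac{1}{z-w} = \frac{1}{z} \sum_{k \ge 0} \left( \frac{w}{z} \right)^k = \sum_{k \ge 0} \frac{w^k}{z^{k+1}} \, ,
\end{align}
and substituting this into the integrand of \eqref{eq:Schur_corr_kernel_int} turns the double integral into a sum of products of single contour integrals, each of which pairs $\mathbb{J}(\cdot;t,\tilde{t})$ or $\mathbb{J}(\cdot;\tilde{t},t)$ against a monomial --- that is, a Laurent (Fourier) coefficient, i.e. one of the $J_n$ or $\tilde{J}_n$ of the statement. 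A relabelling of the summation index then brings the result to the asserted form.

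To make this precise I would first fix the contours to be concentric circles $|z| = R_1$ and $|w| = R_2$ with $R_2 < R_1$, both lying in an annulus $\rho_- < |z| < \rho_+$ with $\rho_- < 1 < \rho_+$ on which $\mathbb{J}(z;t,\tilde{t})$ and $\mathbb{J}(z;\tilde{t},t)$ are holomorphic. Such an annulus exists because $\mathbb{J}$ is the exponential of the Laurent series $\sum_{n \ge 1}(t_n z^n - \tilde{t}_n z^{-n})$, whose coefficients $t_n = \frac{1}{n}\tr X^n$ and $\tilde{t}_n = \frac{1}{n}\tr Y^n$ decay geometrically for the admissible parameters $x_i, y_i \in [0,1)$; in particular $\mathbb{J}(z;t,\tilde{t})\mathbb{J}(w;\tilde{t},t)$ is bounded on $\{|z|=R_1\}\times\{|w|=R_2\}$. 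On these circles the series $\sum_{k \ge 0}(w/z)^k$ converges uniformly (the ratio has modulus $R_2/R_1 < 1$, so the Weierstrass $M$-test applies), which legitimizes interchanging $\sum_{k \ge 0}$ with the double integral. The double integral then factorizes, for each $k$, as
\begin{align}
    \left( \frac{1}{2\pi\ii} \oint \mathbb{J}(z;t,\tilde{t})\, \frac{\dd{z}}{z^{r + \frac{1}{2} + k + 1}} \right) \left( \frac{1}{2\pi\ii} \oint \mathbb{J}(w;\tilde{t},t)\, w^{k + s - \frac{1}{2}}\, \dd{w} \right) \, ,
\end{align}
and by Cauchy's coefficient formula each factor is precisely one of the coefficients $J_\bullet$, $\tilde{J}_\bullet$ from the statement, the index being fixed by the monomial exponents $z^{r+\frac12}$, $w^{-s+\frac12}$ and by $k$. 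Reassembling and relabelling $k \mapsto k-1$, so that $k$ now runs over $\mathbb{Z}_{>0}$, yields $K(r,s) = \sum_{k \in \mathbb{Z}_{>0}} J_{r+k}\tilde{J}_{s+k}$.

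The same Cauchy estimates also deliver the convergence of the one-sided sum, not merely its formal validity: estimating on a circle close to $\rho_+$ shows that $J_n$ decays geometrically as its index runs to $+\infty$, and on a circle close to $\rho_-$ that $\tilde{J}_n$ decays geometrically in the relevant direction, so each summand decays geometrically in $k$. I expect the only point requiring genuine care to be the justification of the term-by-term integration together with the bookkeeping of the half-integer shifts coming from $r,s \in \mathbb{Z} + \frac{1}{2}$; the underlying manipulation is a routine residue computation, and there is no deeper obstacle, since the convergence of the resulting series is itself a by-product of the holomorphy of $\mathbb{J}$ in an annulus containing the unit circle.
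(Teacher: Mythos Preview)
The paper does not supply a proof of this proposition; it is stated immediately after Okounkov's theorem as a known consequence of the contour-integral formula \eqref{eq:Schur_corr_kernel_int} and is then invoked in the proof of Theorem~\ref{thm:q-Bessel_kernel}. Your approach --- expand $\tfrac{1}{z-w}$ as a geometric series on $|z|>|w|$, interchange sum and integral by uniform convergence on concentric circles in a common annulus of holomorphy of $\mathbb{J}$, factorise, and read off Laurent coefficients --- is exactly the standard derivation and your analytic justifications (holomorphy of $\mathbb{J}$ in an annulus around $\mathbb{T}$, the $M$-test, geometric decay of $J_n,\tilde J_n$) are sound.

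One caution on the step you yourself flag as delicate, the index bookkeeping. If you carry out the $w$-integral literally from \eqref{eq:Schur_corr_kernel_int} you obtain, for the $k$-th term,
\[
\frac{1}{2\pi\ii}\oint \mathbb{J}(w;\tilde t,t)\,w^{\,s+k-\frac12}\,\dd w \;=\; \tilde J_{-s-k-\frac12}\,,
\]
not $\tilde J_{s+k}$, so the relabelling $k\mapsto k-1$ alone does not land on the stated sum. To get the form $\sum J_{r+k}\tilde J_{s+k}$ one needs the inversion $\mathbb{J}(w^{-1};\tilde t,t)=\mathbb{J}(w;t,\tilde t)^{-1}$ (equivalently, the change $w\mapsto w^{-1}$ before expanding); with that identity in place the indices line up and one obtains $\sum_{k\in\mathbb{Z}'_{>0}}J_{r+k}\tilde J_{s+k}$, which is precisely the form actually used in the proof of Theorem~\ref{thm:q-Bessel_kernel}. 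Your ``reassembling and relabelling'' sentence should make this step explicit rather than assert the outcome.
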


The following specialization is technically important in the rest of the paper.
\begin{proposition}\label{prop:Schur_to_qPlancherel}
    The $q$-Plancherel measures of both types are obtained from the Schur measure under the specialization,
    \begin{subequations}
        \begin{align}
            \text{Squared type} : \quad &
            t_n = \tilde{t}_n = - \frac{\xi^n}{n(q^{\frac{n}{2}}-q^{-\frac{n}{2}})} \, , \label{eq:sq_specialization} \\
            \text{Mixed type} : \quad &
            t_n = - \frac{\xi^n}{n(q^{\frac{n}{2}}-q^{-\frac{n}{2}})} \, , \quad \tilde{t}_n = \xi q^{-\frac{1}{2}} \delta_{n,1} \, .
        \end{align}
    \end{subequations}
\end{proposition}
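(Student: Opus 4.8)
The plan is to read each specialization in the statement as a classical specialization of Schur functions for which a closed product formula is available, and then compare the result box by box with the definitions of $\mu_{q\text{PP}_S}$ and $\mu_{q\text{PP}_M}$, using the hook length formula~\eqref{eq:hook_length_formula} to pass between $\dim\lambda/|\lambda|!$ and $\prod_{(i,j)\in\lambda} h(i,j)^{-1}$. Two standard facts will do the work: (i) the principal specialization identity $s_\lambda(1,q,q^2,\dots) = q^{b(\lambda)}\prod_{(i,j)\in\lambda}(1-q^{h(i,j)})^{-1}$, valid for $|q|<1$, where the exponent is $b(\lambda)=\sum_i(i-1)\lambda_i$ as in~\eqref{eq:b-partition}; and (ii) the evaluation of $s_\lambda$ on the single power sum $p_1=a$, $p_n=0$ for $n\ge2$, namely $s_\lambda=\frac{\dim\lambda}{|\lambda|!}\,a^{|\lambda|}$, which follows from $s_\lambda=\sum_{\mu\vdash|\lambda|}z_\mu^{-1}\chi^\lambda_\mu\,p_\mu$ since only $\mu=(1^{|\lambda|})$ survives. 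The normalization constants come for free from the Cauchy identity~\eqref{eq:Cauchy_ids}, written as $Z(X,Y)=\exp\big(\sum_{n\ge1} n\,t_n\tilde t_n\big)$.

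First I would treat the squared type. The data $t_n=\tilde t_n=-\xi^n/(n(q^{n/2}-q^{-n/2}))$ is precisely $\tfrac1n\sum_{i\ge1}x_i^n$ for the geometric progression $x_i=\xi q^{i-1/2}$, $i\ge1$, since $\tfrac1n\sum_{i\ge1}(\xi q^{i-1/2})^n=(\xi q^{1/2})^n/(n(1-q^n))$, which equals $-\xi^n/(n(q^{n/2}-q^{-n/2}))$. By homogeneity of $s_\lambda$ and (i), $s_\lambda(X)=s_\lambda(Y)=(\xi q^{1/2})^{|\lambda|}q^{b(\lambda)}\prod_{(i,j)\in\lambda}(1-q^{h(i,j)})^{-1}$, hence $s_\lambda(X)s_\lambda(Y)=(\xi^2 q)^{|\lambda|}q^{2b(\lambda)}\prod_{(i,j)\in\lambda}(1-q^{h(i,j)})^{-2}$, the unnormalized squared-type weight. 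Moreover $Z(X,Y)=\exp\big(\sum_{n\ge1}\xi^{2n}/(n(q^{n/2}-q^{-n/2})^2)\big)$, which is the modified MacMahon function $M(\xi;q)$ by Proposition~\ref{prop:MacMahon_PE}; this finishes the squared-type case.

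For the mixed type, the first slot carries the same principal specialization, giving $s_\lambda(X)=(\xi q^{1/2})^{|\lambda|}q^{b(\lambda)}\prod_{(i,j)\in\lambda}(1-q^{h(i,j)})^{-1}$, while the second slot has $\tilde p_1=\xi q^{-1/2}$ and $\tilde p_n=0$ for $n\ge2$, so (ii) and~\eqref{eq:hook_length_formula} give $s_\lambda(Y)=\frac{\dim\lambda}{|\lambda|!}(\xi q^{-1/2})^{|\lambda|}=(\xi q^{-1/2})^{|\lambda|}\prod_{(i,j)\in\lambda}h(i,j)^{-1}$. Multiplying, the $q^{\pm|\lambda|/2}$ factors cancel and $s_\lambda(X)s_\lambda(Y)=\xi^{2|\lambda|}q^{b(\lambda)}\prod_{(i,j)\in\lambda}\big(h(i,j)(1-q^{h(i,j)})\big)^{-1}$, the unnormalized mixed-type weight; here $Z(X,Y)=\exp(t_1\tilde t_1)$ collapses to the single $n=1$ term and reproduces the exponential prefactor of $\mu_{q\text{PP}_M}$.

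I do not anticipate a real obstacle: the statement is an identification of specializations, and once (i) and (ii) are in hand the proof is bookkeeping. The only delicate point is tracking the half-integer powers of $q$ — matching $t_n$ to the progression $x_i=\xi q^{i-1/2}$ (not $\xi q^{i-1}$ or $\xi q^{i}$), checking that the $q^{+|\lambda|/2}$ from the first slot and the $q^{-|\lambda|/2}$ from the second slot cancel exactly in the mixed case, and confirming that the exponent delivered by (i) is $\sum_i(i-1)\lambda_i$, i.e. $b(\lambda)$ and not $b(\lambda^{\rmT})$.
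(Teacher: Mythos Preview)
Your proof is correct and follows essentially the same approach as the paper: both identify the specialization $t_n=-\xi^n/(n(q^{n/2}-q^{-n/2}))$ with the principal specialization at $x_i=\xi q^{i-1/2}$ and invoke the hook-product formula $s_\lambda(1,q,q^2,\dots)=q^{b(\lambda)}\prod_{(i,j)\in\lambda}(1-q^{h(i,j)})^{-1}$, while the second slot in the mixed case is handled via the exponential specialization $s_\lambda(\{t_n=a\,\delta_{n,1}\})=a^{|\lambda|}\prod_{(i,j)\in\lambda}h(i,j)^{-1}$. Your write-up is in fact slightly more explicit than the paper's, since you also verify the normalization constants via the Cauchy identity (the paper's proof only records the $\lambda$-dependent factors and declares the conclusion).
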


\begin{proof}
    We may use the following identity of the Schur polynomial (e.g.,~\cite[{\S}I.3]{Macdonald:1997}),
    \begin{align}
        s_\lambda(1,q,\ldots,q^{N-1}) = q^{b(\lambda)} \prod_{(i,j) \in \lambda} \frac{1-q^{N+c(i,j)}}{1-h(i,j)} \, ,\quad c(i,j) = j - i \, .
    \end{align}
    Taking the limit $N \to \infty$ together with the homogeneous property, $s_\lambda(a z_1,\ldots,a z_N) = a^{|\lambda|} s_\lambda(z_1,\ldots,z_N)$, we obtain the identities
    \begin{subequations}
        \begin{align}
        s_\lambda\left(\left\{t_n = - \frac{\xi^n}{n(q^{\frac{n}{2}}-q^{-\frac{n}{2}})}\right\}\right) 
        & = (\xi q^{\frac{1}{2}})^{|\lambda|} q^{b(\lambda)} \prod_{(i,j) \in \lambda} \frac{1}{1 - q^{h(i,j)}} \, , \\
        s_\lambda(\{t_n = \xi \delta_{n,1}\}) 
        & = \xi^{|\lambda|} \prod_{(i,j) \in \lambda} \frac{1}{h(i,j)} 
        \, ,
        \end{align}
    \end{subequations}
    from which we conclude the proof.
\end{proof}

\begin{corollary}\label{cor:dDDP}
    The $q$-Plancherel measures are discrete determinantal point processes. 
\end{corollary}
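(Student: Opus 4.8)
The plan is to read the corollary off from Proposition~\ref{prop:Schur_to_qPlancherel} and the determinantal formula of Okounkov quoted above. By Proposition~\ref{prop:Schur_to_qPlancherel}, each of $\mu_{q\text{PP}_S}$ and $\mu_{q\text{PP}_M}$ equals $\mu_\text{S}[\lambda;t,\tilde t]$ for the explicit specialized power sums listed there, while Okounkov's theorem asserts that \emph{any} Schur measure is a discrete determinantal point process on $\mathbb{Z}'$ via the boson--fermion map $\Xi$, with correlation kernel~\eqref{eq:Schur_corr_kernel_int}. Hence the only thing to verify is that the specialized data produce a bona fide kernel, i.e.\ that the auxiliary function~\eqref{eq:J_aux_fn} is well-defined after specialization and the contour integral~\eqref{eq:Schur_corr_kernel_int} converges.

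For the squared type, the specialization~\eqref{eq:sq_specialization} is in fact a genuine (principal) one: the expansion $-\xi^n/(n(q^{n/2}-q^{-n/2}))=\frac1n\sum_{m\ge0}(\xi q^{m+\frac12})^n$ shows that it corresponds to $X=Y=(\xi q^{\frac12},\xi q^{\frac32},\xi q^{\frac52},\dots)$, a nonnegative summable sequence (since $\xi q^{\frac12}<1$) with $X=\overline Y$. Consequently $\sum_{n\ge1}(t_nz^n-\tilde t_nz^{-n})$ converges absolutely on the annulus $\xi q^{\frac12}<|z|<(\xi q^{\frac12})^{-1}$, which contains the unit circle, so $\mathbb{J}(z;t,\tilde t)$ and $\mathbb{J}(w;\tilde t,t)$ are holomorphic there; choosing the contours in~\eqref{eq:Schur_corr_kernel_int} inside this annulus with $|z|>|w|$ makes the integral absolutely convergent, and likewise the series of Proposition~\ref{prop:Schur_kernel_series} converges. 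For the mixed type, $t_n$ is as before while $\tilde t_n=\xi q^{-\frac12}\delta_{n,1}$, so $\sum_{n\ge1}\tilde t_nz^{-n}=\xi q^{-\frac12}z^{-1}$ is entire in $z^{-1}$; the same estimate on $\sum_{n\ge1}t_nz^n$ then shows $\mathbb{J}$ is holomorphic on $0<|z|<(\xi q^{\frac12})^{-1}$, and the kernel is again well-defined.

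With the kernel in hand, Okounkov's theorem applies verbatim: every correlation function $\rho(z_1,\dots,z_k)=\mathbb{P}[\{z_1,\dots,z_k\}\subset\Xi(\lambda)]$ of either $q$-Plancherel measure is the $k\times k$ determinant of the corresponding kernel, which is exactly the claim. I expect the only genuine obstacle to be the analytic bookkeeping of the previous paragraph --- checking that the $N\to\infty$ limit of the finite-variable Schur specialization used in Proposition~\ref{prop:Schur_to_qPlancherel} is compatible with the contour-integral representation~\eqref{eq:Schur_corr_kernel_int} of the kernel; once the annulus of analyticity is identified this is routine, and no structural input beyond Okounkov's theorem is required. (By contrast, the Macdonald and $q$-Whittaker measures mentioned in the introduction are not Schur specializations, which is precisely why the same argument fails to make them determinantal.)
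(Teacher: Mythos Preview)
Your proposal is correct and takes essentially the same approach as the paper: the corollary is immediate from Proposition~\ref{prop:Schur_to_qPlancherel} together with Okounkov's theorem, and the paper offers no further argument. Your additional analytic bookkeeping (convergence of $\mathbb{J}$ on a suitable annulus) is not strictly needed for the corollary as stated, but it is harmless and in fact anticipates the paper's later computation of the $q$-Bessel kernel in~\eqref{eq:SchurJ_generating_function}.
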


\subsection{$q$-Bessel kernel}

For the squared type measure, we have the following Christoffel--Darboux type formula for the correlation kernel.
\begin{theorem}\label{thm:q-Bessel_kernel}
    Let $z_1, \ldots, z_k \in \mathbb{Z}'$.
    The correlation function of the squared type $q$-Plancherel measure is given by 
    \begin{align}
        \rho(z_1,\ldots,z_k) = \det_{1 \le i, j \le k} K_{q\text{B}}(z_i,z_j) \, ,
    \end{align}
    with the correlation kernel,
    \begin{align}
        K_{q\text{B}}(r,s) = \xi \frac{J_{r+\frac{1}{2}}J_{s-\frac{1}{2}} - J_{r-\frac{1}{2}}J_{s+\frac{1}{2}}}{q^{\frac{1}{2}(r-s)} - q^{-\frac{1}{2}(r-s)}} \, , 
    \end{align}
    where $J_n = J^{(3)}_n(2\xi;q)$ is the Hahn--Exton $q$-Bessel function.
    We call this the $q$-Bessel kernel.
\end{theorem}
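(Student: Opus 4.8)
The plan is to specialize the series representation of the Schur correlation kernel (Proposition~\ref{prop:Schur_kernel_series}) to the squared type via Proposition~\ref{prop:Schur_to_qPlancherel}. Since there $t_n=\tilde t_n$, we have $J_n=\tilde J_n$, so the kernel reduces to the bilinear sum $K(r,s)=\sum_{k>0}J_{r+k}J_{s+k}$, where $J_n$ is the $n$-th Laurent coefficient of $\mathbb{J}(z)=\exp\bigl(\sum_{n\ge1}t_n(z^n-z^{-n})\bigr)$. The first step is to put $\mathbb{J}(z)$ into closed form: using $(q^{n/2}-q^{-n/2})^{-1}=-\sum_{j\ge0}q^{n(j+1/2)}$ and resumming the logarithm term by term collapses the exponential into a ratio of $q$-Pochhammer symbols, $\mathbb{J}(z)=(\xi q^{1/2}z^{-1};q)_\infty/(\xi q^{1/2}z;q)_\infty$. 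Expanding numerator and denominator by the Euler and $q$-binomial theorems and comparing with the ${}_1\phi_1$ series identifies the coefficients $J_n$, in the normalization fixed in \S\ref{sec:q-Bessel}, with the Hahn--Exton $q$-Bessel function $J^{(3)}_n(2\xi;q)$; this closed form is the natural $q$-deformation of the Jacobi--Anger generating function.

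The core of the argument is a Christoffel--Darboux type telescoping of $\sum_{k>0}J_{r+k}J_{s+k}$, which requires a three-term recurrence for $J_n$ in the index $n$. I would extract it from the functional equation $(1-\xi q^{1/2}z)(1-\xi q^{-1/2}z^{-1})\,\mathbb{J}(z)=\mathbb{J}(qz)$, which is immediate from the product formula: reading off the coefficient of $z^n$ gives $\xi q^{-1/2}J_{n+1}+\xi q^{1/2}J_{n-1}=(1+\xi^2-q^n)J_n$, equivalently the self-adjoint form $\xi(\hat J_{n+1}+\hat J_{n-1})=(1+\xi^2-q^n)\hat J_n$ for $\hat J_n:=q^{-n/2}J_n$. (As a sanity check, under $\xi=(1-q)\eta$ this reduces as $q\to1$ to the classical discrete Bessel recurrence $J_{n+1}+J_{n-1}=(n/\eta)J_n$, consistent with the known degeneration to the discrete Bessel kernel.) Writing the $k$-th summand as $u_kv_k$ with $u_k=\hat J_{r+k}$, $v_k=\hat J_{s+k}$, cross-multiplying the recurrence evaluated at $n=r+k$ and at $n=s+k$, and collecting the discrete Wronskian $W_k=u_kv_{k-1}-u_{k-1}v_k$, one obtains an identity of the form $\xi(W_{k+1}-W_k)=q^{k}(q^{s}-q^{r})\,u_kv_k$. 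The crucial point is that the $q^{k}$ on the right matches exactly the factor $q^{k}$ in $J_{r+k}J_{s+k}=q^{(r+s)/2}q^{k}\hat J_{r+k}\hat J_{s+k}$, so the weights cancel and the whole sum telescopes to a single boundary term at $k=1$, a multiple of $\hat J_{r+1/2}\hat J_{s-1/2}-\hat J_{r-1/2}\hat J_{s+1/2}$.

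It then remains to show that the $k\to\infty$ boundary term vanishes — which follows from the decay of the Hahn--Exton $q$-Bessel functions when $\xi$ is small, extended to all $\xi\in[0,1)$ by analyticity in $\xi$, the claimed expression being manifestly analytic and $K(r,s)$ analytic wherever the defining series converges — and to simplify, via $q^{r}-q^{s}=q^{(r+s)/2}(q^{(r-s)/2}-q^{-(r-s)/2})$, to bring the boundary term into the stated shape $\xi\,(J_{r+1/2}J_{s-1/2}-J_{r-1/2}J_{s+1/2})/(q^{(r-s)/2}-q^{-(r-s)/2})$. The determinantal form of $\rho(z_1,\dots,z_k)$ is not part of this computation: it is inherited from Okounkov's theorem, the $q$-Plancherel measure being a Schur measure (Corollary~\ref{cor:dDDP}).

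The step I expect to be the main obstacle is the telescoping, precisely because the recurrence coefficient $1+\xi^2-q^n$ is $q$-exponential rather than linear in $n$: one must pass to the symmetrized $\hat J_n$ to make the recurrence self-adjoint, and then it is not automatic that the $q$-weights align so as to cancel the $q^{k}$ — this is what distinguishes the $q$-deformed case from the classical discrete Bessel kernel. The remaining work, namely tracking the half-integer index shifts and fixing the normalization of $J^{(3)}_\nu$ (including at negative order, where it is defined by analytic continuation) so that the constants land exactly on the stated formula, is routine but requires care.
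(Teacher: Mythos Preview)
Your approach is essentially the paper's own: specialize the Schur kernel series, recognize $\mathbb{J}(z)$ as the $q$-Pochhammer ratio generating the Hahn--Exton $q$-Bessel functions, and telescope the bilinear sum via the three-term recurrence. The paper quotes the recurrence~\eqref{eq:q-Bessel_rec_rel3} directly and multiplies $K(r,s)$ by $(q^{r}-q^{s})$, whereas you derive the recurrence from the functional equation $\mathbb{J}(qz)=(1-\xi q^{1/2}z)(1-\xi q^{-1/2}z^{-1})\mathbb{J}(z)$ and package the telescoping as a discrete Wronskian; these are the same computation.

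One normalization slip to fix: the $n$-th Laurent coefficient of $\mathbb{J}(z)=(\xi q^{1/2}z^{-1};q)_\infty/(\xi q^{1/2}z;q)_\infty$ is $q^{n/2}J^{(3)}_n(2\xi;q)$, not $J^{(3)}_n(2\xi;q)$ itself (the paper writes $\mathbb{J}(z)=\sum_n(q^{1/2}z)^nJ^{(3)}_n$). In your own notation this means it is $\hat J_n$, not $J_n$, that coincides with $J^{(3)}_n(2\xi;q)$. Your telescoping already uses the correct relation $J_m=q^{m/2}\hat J_m$, so the argument is sound; just make sure that the $J$'s in your final displayed formula are your $\hat J$'s, and track the half-integer shift so that the boundary term lands at indices $r\pm\tfrac12$, $s\pm\tfrac12$ rather than $r,r{+}1,s,s{+}1$.
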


\begin{proof}
    Under the specialization~\eqref{eq:sq_specialization}, the auxiliary function $\mathbb{J}(z;t,\tilde{t})$ defined in \eqref{eq:J_aux_fn} is identified with the generating function of the Hahn--Exton $q$-Bessel function $J_n = J^{(3)}_n(2\xi;q)$ (see, e.g.,~\cite{Gasper2004}),
    \begin{align}
    \mathbb{J}\left(z;t,\tilde{t} \right) = \mathbb{J}\left(z;\tilde{t}, t \right)^{-1} = \exp \left( - \sum_{n \ge 1} \frac{\xi^n( z^n - z^{-n} )}{n(q^{\frac{n}{2}}-q^{-\frac{n}{2}})}  \right) = \frac{(\xi q^{\frac{1}{2}}z^{-1};q)_\infty}{(\xi q^{\frac{1}{2}}z;q)_\infty} = \sum_{n \in \mathbb{Z}} (q^{\frac{1}{2}}z)^n J_n \, . \label{eq:SchurJ_generating_function}
    \end{align}
    Hence, by Proposition~\ref{prop:Schur_kernel_series}, the correlation kernel is given by
    \begin{align}
        K(r,s) = q^{\frac{1}{2}(r+s)} \sum_{k \in \mathbb{Z}_{>0}'} q^{k} J_{r+k} J_{s+k} \, , \qquad r, s \in \mathbb{Z}' \, ,
    \end{align}
    which implies
    \begin{align}
        (q^{r} - q^{s}) K(r,s) & = q^{\frac{1}{2}(r+s)} \sum_{k \in \mathbb{Z}_{>0}'} \left( q^{r+k} J_{r+k} J_{s+k} - q^{s+k} J_{r+k} J_{s+k} \right) \, .        
    \end{align}
    Using the recurrence relation of the Hahn--Exton $q$-Bessel function~\eqref{eq:q-Bessel_rec_rel3}, 
    \begin{align}
        J_{n+1} = \left( \frac{1 - q^n}{\xi} + \xi \right) J_n - J_{n-1}\, ,
    \end{align}
    we have
    \begin{align}
        (q^{r} - q^{s}) K(r,s) & = - \xi q^{\frac{1}{2}(r+s)} \sum_{k \in \mathbb{Z}_{>0}'} \left( J_{r+k+1} J_{s+k} + J_{r+k-1} J_{s+k} - J_{r+k} J_{s+k+1} - J_{r+k} J_{s+k-1} \right) 
        \nonumber \\
        & = \xi q^{\frac{1}{2}(r+s)} \left( J_{r+\frac{1}{2}} J_{s-\frac{1}{2}} - J_{r-\frac{1}{2}} J_{s+\frac{1}{2}} \right) \, ,
    \end{align}
    from which we obtain the correlation kernel.
\end{proof}

A crucial fact in the proof is that $\mathbb{J}$ is the generating function of the $q$-Bessel functions~\eqref{eq:SchurJ_generating_function}, which is specific for the squared type measure. 
Such an expression is not obvious for the mixed type measure, hence an analogue of Theorem~\ref{thm:q-Bessel_kernel} is not available for the moment.

\subsubsection*{$q \to 1$ limit}

Set $\xi = (1 - q)\eta$. 
In the limit $q \to 1$, the Hahn--Exton $q$-Bessel function is reduced to the ordinary Bessel function, $\lim_{q \to 1} J_n^{(3)}((1-q)\eta;q) = J_n(\eta)$, which directly implies the reduction of the $q$-Bessel kernel to the discrete Bessel kernel in this limit,
\begin{align}
    \lim_{q \to 1} K_{q\text{B}}(r,s)\Big|_{\xi = (1 - q)\eta} = K_\text{dB}(r,s) = \eta \frac{J_{r-\frac{1}{2}}(2\eta)J_{s+\frac{1}{2}}(2\eta) - J_{r+\frac{1}{2}}(2\eta)J_{s-\frac{1}{2}}(2\eta)}{r-s} \, .
\end{align}

\subsection{Scaling limit}\label{sec:scaling_lim}

As in the case of the discrete Bessel kernel, we consider the scaling limit of the $q$-Bessel kernel, which gives rise to the universal correlation kernels.

\subsubsection*{Bulk scaling limit}
We first consider the bulk scaling limit, where the kernel is asymptotic to the sine kernel.
\begin{proposition}\label{prop:bulk_lim}
    Let $a = - 2 \log(1+\xi)$, $b = - 2\log(1-\xi)$, and $x \in [a,b]$.
    The $q$-Bessel kernel is asymptotic to the sine kernel in the following limit (the bulk scaling limit),
    \begin{align}
        \lim_{q \to 1} K_{q\text{B}}\left( - \frac{x}{\log q} + u, - \frac{x}{\log q} + v \right) = 
        \begin{cases}
            \displaystyle
            \frac{\sin \pi\varrho(u - v)}{\pi(u - v)} & (u \neq v) \\ 
            \varrho & (u = v)
        \end{cases}
    \end{align}
    where $\varrho = \varrho(x) \in [0,1]$ is the bulk one-point function,
    \begin{align}
    \varrho(x) = \frac{1}{\pi} \arccos \frac{1}{2} \left( \xi + \frac{1 - \ee^{-x}}{\xi} \right) \, , \quad x \in [a,b] \, . \label{eq:bulk_1pt_fn}
    \end{align}    
\end{proposition}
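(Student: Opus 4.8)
The plan is to apply the Laplace (steepest-descent) method, combining the Christoffel--Darboux form of $K_{q\text{B}}$ with the large-$N$ asymptotics of the Hahn--Exton $q$-Bessel function $J_n = J_n^{(3)}(2\xi;q)$ in the oscillatory regime $n \sim -x/\log q$, where $N := -1/\log q \to +\infty$ and $\xi$ is fixed. By \eqref{eq:SchurJ_generating_function}, $q^{\frac{n}{2}}J_n = \frac{1}{2\pi\ii}\oint \mathbb{J}(z)\,z^{-n-1}\,\dd{z}$ over any contour in the annulus $\xi q^{\frac{1}{2}} < |z| < (\xi q^{\frac{1}{2}})^{-1}$, which contains the unit circle. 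Writing $q = \ee^{-1/N}$ and invoking the classical $q\to1$ asymptotics of the $q$-Pochhammer symbol (a quantum-dilogarithm estimate; the half-integer shift in $(\xi q^{\frac{1}{2}}z^{\pm1};q)_\infty$ makes the order-one corrections cancel) gives $\log\mathbb{J}(z) = -N\,g(z) + O(1/N)$, uniformly on compacts, with $g(z) = \operatorname{Li}_2(\xi/z) - \operatorname{Li}_2(\xi z)$. Hence, with $n = Nx + j$ for a bounded shift $j$, $q^{\frac{n}{2}}J_n = \frac{1}{2\pi\ii}\oint \ee^{-N S(z)}\,z^{-j-1}\,\dd{z}\,(1+O(1/N))$ with action $S(z) = g(z) + x\log z$.

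The next step is the saddle analysis. From $\frac{\dd}{\dd{w}}\operatorname{Li}_2(w) = -\log(1-w)/w$ one gets $S'(z) = z^{-1}\big(\log((1-\xi/z)(1-\xi z)) + x\big)$, so the saddle equation is $(1-\xi/z)(1-\xi z) = \ee^{-x}$, i.e. $z + z^{-1} = \xi + (1-\ee^{-x})/\xi = 2c(x)$. For $x \in (a,b)$, $c(x)\in(-1,1)$, so the saddles form the conjugate pair $z_c = \ee^{\ii\pi\varrho(x)}$, $\bar z_c = z_c^{-1}$ with $\varrho(x) = \frac{1}{\pi}\arccos c(x)$, precisely \eqref{eq:bulk_1pt_fn}. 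Using $\operatorname{Li}_2(\bar w) = \overline{\operatorname{Li}_2(w)}$ and $\bar z_c = z_c^{-1}$ one checks $S(z_c) \in \ii\mathbb{R}$; using $z_c^2 + 1 = 2c\,z_c$ (equivalently $(z_c-\xi)(1-\xi z_c) = z_c\ee^{-x}$) one finds $S''(z_c) = \xi(1-z_c^2)\ee^{x}/z_c^3$, so $|S''(z_c)| = 2\xi\,\ee^{x}\sin\pi\varrho(x)\neq 0$ on $(a,b)$. Deforming the contour, within the annulus (where $\mathbb{J}$ is analytic, with zeros at $z=\xi q^{\frac{1}{2}+k}$ and poles at $z=\xi^{-1}q^{-\frac{1}{2}-k}$, $k\ge 0$, lying strictly inside resp.\ outside), onto the steepest-descent path through $z_c$ and $\bar z_c$, and applying the Laplace method at each saddle (the two contributions being complex conjugate, since $\mathbb{J}$ has real coefficients and the contour is conjugation-symmetric), together with $q^{-\frac{n}{2}}\to\ee^{x/2}$, yields
\begin{align}
    J_{Nx+j} = \frac{2\,\ee^{x/2}}{\sqrt{2\pi N\,|S''(z_c)|}}\,\cos\!\big(\Phi_N(x) - j\,\pi\varrho(x)\big)\,(1+o(1)) \, ,
\end{align}
where $\Phi_N(x)$ is a phase depending on $N$, $x$, $\xi$ but not on the shift $j$.

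Substituting into $K_{q\text{B}}(r,s)$ with $r = Nx+u$, $s = Nx+v$, the four Bessel values carry shifts $j \in \{u\pm\frac{1}{2},\,v\pm\frac{1}{2}\}$. Expanding $J_{r+\frac{1}{2}}J_{s-\frac{1}{2}}$ and $J_{r-\frac{1}{2}}J_{s+\frac{1}{2}}$ by the product-to-sum identity, the $j$-independent fast phase enters only through a common term $\propto \cos(2\Phi_N - (u+v)\pi\varrho)$, which cancels in the difference, leaving
\begin{align}
    J_{r+\frac{1}{2}}J_{s-\frac{1}{2}} - J_{r-\frac{1}{2}}J_{s+\frac{1}{2}} = -\,\frac{2\,\ee^{x}}{\pi N\,|S''(z_c)|}\,\sin\!\big((u-v)\pi\varrho\big)\,\sin(\pi\varrho)\,(1+o(1)) \, .
\end{align}
Since $q^{\frac{1}{2}(r-s)} - q^{-\frac{1}{2}(r-s)} = -2\sinh\frac{u-v}{2N} = -\frac{u-v}{N}(1+o(1))$, dividing and inserting $|S''(z_c)| = 2\xi\,\ee^{x}\sin\pi\varrho$ gives $K_{q\text{B}}(r,s) \to \xi\cdot\frac{2\,\ee^{x}\sin\pi\varrho}{\pi\,|S''(z_c)|}\cdot\frac{\sin\pi\varrho(u-v)}{u-v} = \frac{\sin\pi\varrho(u-v)}{\pi(u-v)}$, the asserted limit for $u\neq v$. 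For $u = v$ one uses $K_{q\text{B}}(r,r) = \lim_{s\to r}K_{q\text{B}}(r,s) = \xi N\big(J_{r+\frac{1}{2}}\,\partial_s J_{s-\frac{1}{2}} - J_{r-\frac{1}{2}}\,\partial_s J_{s+\frac{1}{2}}\big)\big|_{s=r}$ (the $q$-Bessel function being analytic in its index); the same asymptotics, with differentiation in the index turning the $\cos$ into $\pi\varrho\sin$, give the value $\varrho = \lim_{v\to u}\frac{\sin\pi\varrho(u-v)}{\pi(u-v)}$.

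The hard part is making the steepest-descent step rigorous under the joint limit: one needs the $q\to1$ asymptotics of $\mathbb{J}$ with an error bound uniform along the deformed contour, one must verify that the contour can be deformed onto a genuine descent path avoiding the accumulating zeros and poles of $\mathbb{J}$, and one must bound the off-saddle tails uniformly in $q$. A minor point is that $-x/\log q \notin \mathbb{Z}+\frac{1}{2}$ in general, so the statement should be read with $J_n$ extended to real index via its defining series (or along a subsequence fixing the fractional part of $-x/\log q$). At the endpoints $x = a$, $x = b$, where $c(x) = \mp1$, the two saddles coalesce and $S''(z_c)$ degenerates while $\varrho\in\{1,0\}$; there the limiting kernel is $\delta_{uv}$ resp.\ $0$, which follows from a separate (and simpler) degenerate-saddle estimate, or by continuity in $x$ from the interior.
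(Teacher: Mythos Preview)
Your argument is correct at the level of a sketch (which is all the paper itself offers here), but the route is genuinely different. The paper works directly with the double contour integral representation \eqref{eq:Schur_corr_kernel_int} of the kernel, writes $K_{q\text{B}}(r,s)=\frac{1}{(2\pi\ii)^2}\oint\frac{\ee^{S(z,r)-S(w,s)}}{z-w}\frac{\dd z\,\dd w}{z^{1/2}w^{1/2}}$ with $S(z,r)=-\sum_{n\ge1}\frac{\xi^n(z^n-z^{-n})}{n(q^{n/2}-q^{-n/2})}-r\log z$, locates the critical points of $S$ on the unit circle via $\partial_\theta S(\ee^{\ii\theta},r)=0$ (which gives exactly your saddle equation $\cos\pi\varrho=\tfrac12(\xi+(1-\ee^{-x})/\xi)$), and then appeals to the standard Okounkov-type two-variable steepest descent to extract the sine kernel. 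You instead start from the Christoffel--Darboux form of Theorem~\ref{thm:q-Bessel_kernel}, obtain one-variable oscillatory asymptotics $J_{Nx+j}\sim A\cos(\Phi_N-j\pi\varrho)$ for individual Hahn--Exton $q$-Bessel functions (via the dilogarithm expansion of $\mathbb{J}$), and then let the trigonometric product-to-sum identity kill the fast phase $\cos(2\Phi_N-(u+v)\pi\varrho)$ so that only the slow term $\sin\pi\varrho(u-v)$ survives. Your approach has the pleasant by-product of giving explicit bulk asymptotics for $J^{(3)}_n(2\xi;q)$ and makes the mechanism of cancellation completely transparent; the paper's double-integral approach avoids tracking the amplitude and phase constants (they cancel automatically in the $1/(z-w)$ integrand) and plugs directly into the existing machinery of~\cite{Okounkov:2002}. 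Your caveats about uniform error control along the deformed contour, the accumulating zeros/poles of $\mathbb{J}$, and the interpretation of non-half-integer arguments are well taken and apply equally to the paper's sketch.
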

We may apply the same approach as, e.g.,~\cite[\S3]{Okounkov:2002}, to prove this statement. 
We here provide a sketch of the proof:
Set $q = \ee^{-\epsilon}$ with $\epsilon > 0$.
From the contour integral formula of the correlation kernel \eqref{eq:Schur_corr_kernel_int}, we have
\begin{align}
    K_{q\text{B}}\left( r, s \right) = \frac{1}{(2 \pi \ii)^2} \oint_{|z| > |w|} \frac{\ee^{S(z,r) - S(w,s)}}{z - w} \frac{\dd{z}\dd{w}}{z^\frac{1}{2} w^{\frac{1}{2}}} \, ,
\end{align}
where we define the action function,
\begin{align}
    S(z,r) = - \sum_{n \ge 1} \frac{\xi^n(z^n - z^{-n})}{n(q^{\frac{n}{2}} - q^{-\frac{n}{2}})} - r \log z = \frac{1}{\epsilon} \sum_{n \ge 1} \frac{\xi^n(z^n - z^{-n})}{n^2} - r \log z + O(\epsilon) \, .
\end{align}
Therefore, we may apply the method of steepest descent in the limit $\epsilon \to 0$ ($q \to 1$).
Parametrizing $z = \ee^{\ii \theta}$, we have
\begin{align}
    \ii S(\ee^{\ii \theta},r) = 2 \sum_{n \ge 1} \frac{\xi^n \sin n \theta}{n(q^{\frac{n}{2}} - q^{-\frac{n}{2}})} + r \theta = - \frac{2}{\epsilon} \sum_{n \ge 1} \frac{\xi^n \sin n \theta}{n^2} + r \theta + O(\epsilon) \, .
\end{align}
Set
\begin{align}
    \alpha = - 2 \sum_{n \ge 1} \frac{\xi^n}{q^{\frac{n}{2}} - q^{-\frac{n}{2}}} = \sum_{k=0}^\infty \alpha_k \epsilon^{k-1} \, , \qquad
    \beta = - \sum_{n \ge 1}  \frac{n^2 \xi^n}{q^{\frac{n}{2}} - q^{-\frac{n}{2}}} = \sum_{k = 0}^\infty \beta_k \epsilon^{k-1} \, .
\end{align}
Note that $\alpha, \beta \ge 0$.
In particular, the leading contributions are given by
\begin{align}
    \alpha_0 = 2 \sum_{n \ge 1} \frac{\xi^n}{n} = - 2 \log(1 - \xi) \ge 0 \, , \qquad 
    \beta_0 = \sum_{n \ge 1} n \xi^n = \frac{\xi}{(1 - \xi)^2} \ge 0 \, .
\end{align}
We compute
\begin{align}
    \ii \pdv{}{\theta} S(\ee^{\ii \theta},r) = 2 \sum_{n \ge 1} \frac{\xi^n \cos n \theta}{q^{\frac{n}{2}} - q^{-\frac{n}{2}}} + r = - \frac{2}{\epsilon} \sum_{n \ge 1} \frac{\xi^n}{n} \cos n \theta + r + O(\epsilon) \, , \label{eq:d_S}
\end{align}
and hence the stationary equation in the limit $\epsilon \to 0$ ($q \to 1$) is given by
\begin{align}
    \epsilon r & \stackrel{O(\epsilon^2)}{=} 2 \sum_{n \ge 1} \frac{\xi^n}{n} \cos n \theta = \sum_{n \ge 1} \frac{\xi^n}{n} (\ee^{\ii n \theta} + \ee^{-\ii n \theta}) \nonumber \\
    & = - \log (1 - \xi \ee^{\ii \theta}) - \log (1 - \xi \ee^{- \ii \theta}) = - \log (1 + \xi^2 - 2 \xi \cos \theta) \, .
\end{align}
Therefore, $\varrho$ solves the following equation,
\begin{align}
    x = 2 \sum_{n \ge 1} \frac{\xi^n}{n} \cos n \pi \varrho = - \log (1 + \xi^2 - 2 \xi \cos \pi \varrho) \, ,
\end{align}
from which we obtain
\begin{align}
    \cos \pi \varrho = \frac{1}{2} \left( \xi + \frac{1 - \ee^{-x}}{\xi} \right) \, .
\end{align}
We remark that such a solution exists only when $\epsilon r \in [a,b]$.
Under this condition, we may apply the method of steepest descent to obtain the sine kernel.
See, e.g.,~\cite[\S3]{Okounkov:2002}, for details.

\subsubsection*{Limit shape}

We have found the bulk one-point function~\eqref{eq:bulk_1pt_fn} for $x \in [a,b]$. 
After simple analysis, we can similarly consider $x \not\in [a,b]$ to have
\begin{align}
    \varrho(x) =
    \begin{cases}
        1 & (x < a) \\
        \displaystyle
        \frac{1}{\pi} \arccos \frac{1}{2} \left( \xi + \frac{1 - \ee^{-x}}{\xi} \right) & (a \le x \le b) \\
        0 & (b < x)
    \end{cases}
\end{align}
The domain $x < a$ is called the frozen region, while $x > b$ is called the empty region.
We then obtain the profile function $\Omega$ from the one-point function.
We remark that $\lim_{\xi \to 1} a = - 2 \log 2 = -1.386...$ and $\lim_{\xi \to 1} b = \infty$.
Recalling the relation $\Omega' = 1 - 2 \varrho$, we have
\begin{align}
    \Omega(x) = 
    \begin{cases}
        \displaystyle x - 2a - 2 \int_{a}^x \varrho(z) \dd{z} & (a \le x \le b) \\
        |x| & (x < a, b < x)
    \end{cases}
\end{align}
We plot the one-point function for the squared type measure in Fig.~\ref{fig:rho_plot} and the profile function in Fig.~\ref{fig:omega_plot} by changing the parameter for $\xi \in [0,1)$.
The profile becomes more asymmetric for larger $\xi$.

\begin{figure}[t]
    \centering
    \begin{subfigure}[b]{0.45\textwidth}
        \centering
        \begin{tikzpicture}
        \node at (0,0) {\includegraphics[width=\linewidth]{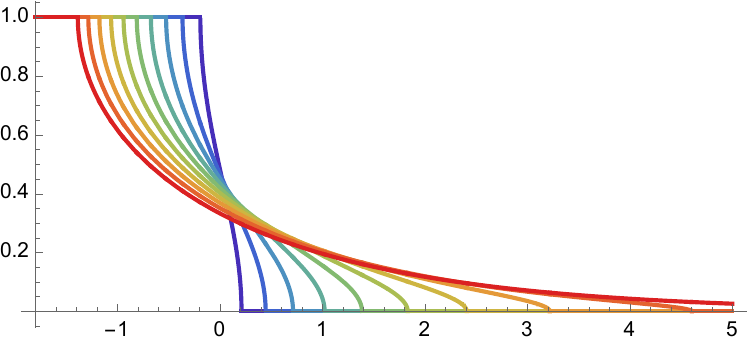}};
        \end{tikzpicture}
        \caption{One-point function $\varrho$.}
        \label{fig:rho_plot}
     \end{subfigure}
     \hfill
     \begin{subfigure}[b]{0.5\textwidth}
        \centering
        \begin{tikzpicture}
        \node at (0,0) {\includegraphics[width=.8\linewidth]{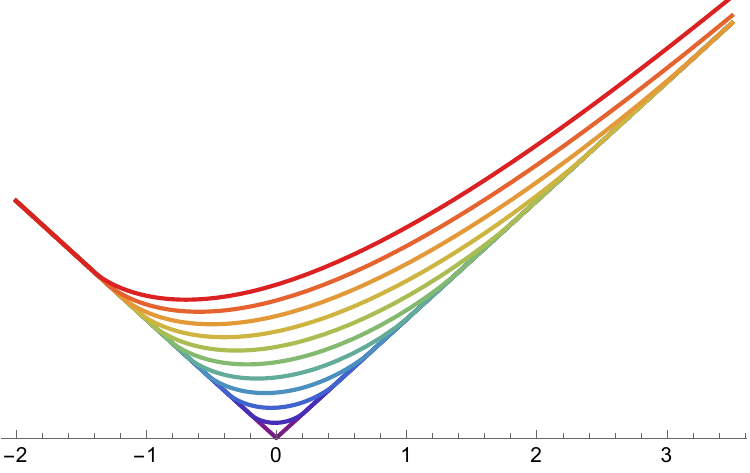}};
        \node at (4.2,.5) {\includegraphics[height=9em]{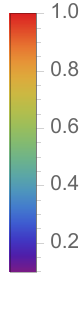}};
        \node at (4.,2.5) {$\xi$};
        \end{tikzpicture}
        \caption{Profile function $\Omega$.}
        \label{fig:omega_plot}
     \end{subfigure}
     \caption{The limit shape functions for the parameter $\xi \in [0,1)$ (step 0.1) for the squared type measure.}
     \label{fig:prof_fn}
\end{figure}

\subsubsection*{Edge scaling limit}
The Airy kernel is the universal correlation kernel appearing in the so-called edge scaling limit of GUE,
\begin{align}
    K_\text{Ai}(x,y) = \int_0^\infty \operatorname{Ai}(x+z) \operatorname{Ai}(y+z) \dd{z} = \frac{\operatorname{Ai}(x) \operatorname{Ai}'(y) - \operatorname{Ai}'(x) \operatorname{Ai}(y)}{x-y} \, ,
\end{align}
where the Airy function is defined by
\begin{align}
    \operatorname{Ai}(x) = \frac{1}{2 \pi \ii} \int_{\ii \mathbb{R}} \ee^{\frac{z^3}{3} - x z } \dd{z} \, .
\end{align}

\begin{proposition}\label{prop:edge_lim}
    The $q$-Bessel kernel is asymptotic to the Airy kernel in the following limit (the edge scaling limit),
    \begin{align}
        \lim_{q \to 1} \left(-\frac{\beta_0}{\log q}\right)^{\frac{1}{3}} K_{q\text{B}}\left( - \frac{\alpha_0}{\log q} + \left(-\frac{\beta_0}{\log q}\right)^{\frac{1}{3}} x, - \frac{\alpha_0}{\log q} + \left(-\frac{\beta_0}{\log q}\right)^{\frac{1}{3}} y \right) = K_\text{Ai}(x,y) \, ,
    \end{align}
    with $\alpha_0 = - 2 \log (1 - \xi)$ and $\beta_0 = \xi/(1-\xi)^2$.
\end{proposition}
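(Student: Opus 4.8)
The plan is to apply the method of steepest descent to the double contour representation of the $q$-Bessel kernel, exactly as in the bulk analysis above and in~\cite[\S3]{Okounkov:2002}; the new feature at the edge is that the relevant saddle point is \emph{degenerate} (a double saddle), which is precisely what replaces the sine kernel by the Airy kernel. Writing $q = \ee^{-\epsilon}$ with $\epsilon \to 0^+$, and keeping the notation $S(z,r) = \tfrac1\epsilon S_0(z) - r\log z + O(\epsilon)$ with $S_0(z) = \sum_{n \ge 1} \xi^n(z^n - z^{-n})/n^2$, one starts from
\begin{align}
    K_{q\text{B}}(r,s) = \frac{1}{(2\pi\ii)^2} \oint_{|z| > |w|} \frac{\ee^{S(z,r) - S(w,s)}}{z - w} \frac{\dd{z}\,\dd{w}}{z^{\frac12} w^{\frac12}} \, ,
\end{align}
the contours being circles about the origin with $|z| > |w|$.

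First I would locate the edge. Using the identity $z S_0'(z) = -\log\big(1 + \xi^2 - \xi(z + z^{-1})\big)$ already exploited for the bulk limit, the stationary equation $\tfrac1\epsilon z S_0'(z) = r$ with $r \sim \alpha_0/\epsilon$ forces, to leading order, $1 + \xi^2 - \xi(z+z^{-1}) = (1-\xi)^2$, i.e.\ $z + z^{-1} = 2$; hence $z = 1$ is a \emph{double} root. Setting $g(z) = S_0(z) - \alpha_0 \log z$, I would make this precise by the elementary computations $g'(1) = 0$, $\;S_0''(1) = -2\sum_{n\ge1} \xi^n/n = -\alpha_0$ (so $g''(1) = 0$), and $S_0'''(1) = 2\sum_{n\ge1} n\xi^n + 4\sum_{n\ge1}\xi^n/n = 2\beta_0 + 2\alpha_0$ (so $g'''(1) = 2\beta_0 \ne 0$), where $\alpha_0 = -2\log(1-\xi)$ and $\beta_0 = \xi/(1-\xi)^2$. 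Equivalently, $z = 1$ is an exact double saddle of $S(\cdot\,,\alpha)$ for the $\alpha$ of the bulk section, and replacing $\alpha$ by $-\alpha_0/\log q = \alpha_0/\epsilon$ alters the exponent only by $O(\epsilon^{1/3})$, which is negligible after the edge scaling.

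Next I would zoom into the double saddle: substitute the edge scaling $r = \alpha_0/\epsilon + (\beta_0/\epsilon)^{1/3} x$, $s = \alpha_0/\epsilon + (\beta_0/\epsilon)^{1/3} y$ together with the local coordinates $z = 1 + (\epsilon/\beta_0)^{1/3}\,\tilde z$, $w = 1 + (\epsilon/\beta_0)^{1/3}\,\tilde w$. Since $S(z,r) = \tfrac1\epsilon g(z) - (r - \alpha_0/\epsilon)\log z + O(\epsilon)$ and $g(z) = g(1) + \tfrac{\beta_0}{3}(z-1)^3 + O((z-1)^4)$, one gets
\begin{align}
    S(z,r) = \frac{g(1)}{\epsilon} + \frac{\tilde z^3}{3} - x\,\tilde z + O(\epsilon^{1/3}) \, ,
\end{align}
and likewise for $S(w,s)$; the divergent constant $g(1)/\epsilon$ cancels in $S(z,r) - S(w,s)$, the prefactor $z^{1/2}w^{1/2} \to 1$, and the Jacobian together with $z - w = (\epsilon/\beta_0)^{1/3}(\tilde z - \tilde w)$ supplies an overall factor $(\epsilon/\beta_0)^{1/3}$. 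Hence, formally,
\begin{align}
    \left(-\frac{\beta_0}{\log q}\right)^{\frac13} K_{q\text{B}}(r,s) \;\longrightarrow\; \frac{1}{(2\pi\ii)^2} \iint \frac{\ee^{\tilde z^3/3 - x\tilde z - \tilde w^3/3 + y\tilde w}}{\tilde z - \tilde w} \, \dd{\tilde z}\,\dd{\tilde w} \, .
\end{align}
A circle $|z| = \rho$ near $z = 1$ is asymptotically the vertical line $\operatorname{Re} z = \rho$, so $|z| > |w|$ becomes $\operatorname{Re}\tilde z > \operatorname{Re}\tilde w$; opening these two lines into the standard pair of Airy contours (the $\tilde z$-contour running to $\infty$ in the directions $\ee^{\pm \ii\pi/3}$ and the $\tilde w$-contour in the directions $\ee^{\pm 2\ii\pi/3}$, where the cubic exponents decay) and using $\tfrac{1}{\tilde z - \tilde w} = \int_0^\infty \ee^{-t(\tilde z - \tilde w)}\dd{t}$ together with the contour representation of $\operatorname{Ai}$ identifies the right-hand side with $\int_0^\infty \operatorname{Ai}(x+t)\operatorname{Ai}(y+t)\dd{t} = K_\text{Ai}(x,y)$.

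The main obstacle is the analytic justification of this steepest descent: one must check that the parts of the two circles outside an $O(\epsilon^{1/3})$ neighbourhood of $z = w = 1$ are exponentially negligible, that the $O(\epsilon)$ corrections to $S(z,r)$ arising from $q^{n/2} - q^{-n/2} = -n\epsilon + O(n^3\epsilon^3)$ do not contribute in the limit, and that the deformation of the circular contours to the Airy contours is legitimate uniformly in $\epsilon$. Because the saddle is degenerate the local model is cubic rather than Gaussian, so the tail estimates must use the Airy ($\epsilon^{1/3}$) scaling instead of the Laplace one; this is routine but somewhat delicate and can be carried out following~\cite[\S3]{Okounkov:2002} — which treats the corresponding edge limit of the discrete Bessel kernel — essentially verbatim.
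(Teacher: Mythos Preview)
Your proposal is correct and follows essentially the same steepest-descent approach as the paper: identify the degenerate (double) saddle at $z=1$ (equivalently $\theta=0$), expand the action to cubic order there, rescale by $(\beta_0/\epsilon)^{1/3}$, and defer the analytic justification to~\cite[\S3]{Okounkov:2002}. Your version is in fact more detailed than the paper's own sketch---you compute $g'(1)$, $g''(1)$, $g'''(1)$ explicitly in the $z$-variable and spell out the contour deformation to the Airy rays---whereas the paper works in the angular variable $\theta$ and simply records the cubic expansion $\ii S(\ee^{\ii\theta},\cdot)\approx\tfrac{\beta_0}{\epsilon}\tfrac{\theta^3}{3}+(\beta_0/\epsilon)^{1/3}x\theta$ before invoking the same reference.
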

We may apply the method of steepest descent as before.
Set $q = \ee^{-\epsilon}$.
For $r = \alpha_0/\epsilon$, $\theta=0$ is the critical point of the action. 
Expanding the action around the critical point, we obtain
\begin{align}
    \ii S\left(\ee^{\ii \theta},\frac{\alpha_0}{\epsilon} + \left(\frac{\beta_0}{\epsilon}\right)^{\frac{1}{3}} x \right) \stackrel{O(\epsilon)}{=} \frac{\beta_0}{\epsilon} \frac{\theta^3}{3} + \left(\frac{\beta_0}{\epsilon}\right)^{\frac{1}{3}} x \theta + O(\theta^5) \, .
\end{align}
Applying the change of variable, $\theta = (\beta_0/\epsilon)^{-\frac{1}{3}} \tilde{\theta}$, together with the change of the contour, we obtain the Airy kernel in the limit $\epsilon \to 0$ ($q \to 1$).
See, e.g.,~\cite[\S3]{Okounkov:2002}, for details.

\section{Gap probability and orthogonal polynomials}

\subsection{Toeplitz determinant formula}

Let $I \subset \mathbb{Z}'$.
The probability to find no element in $I$ is called the gap probability, which is one of the most important quantities associated with the measure on partitions.
In particular, for the determinantal case, the gap probability for the interval $I = [N+\frac{1}{2},\infty)$ is given by the following Fredholm determinant of the correlation kernel,
\begin{align}
    \mathbb{P}[\lambda_1 \le N] = \det(1 - K)_{\ell^2([N+\frac{1}{2},\infty))} \, .
\end{align}
For the Schur measure, Borodin and Okounkov~\cite{Borodin2000} obtained the Toeplitz determinant formula for the gap probabilities.
In this Section, we derive such a Toeplitz determinant formula for the $q$-Plancherel measure.
In the following, we focus on the squared type measure.

\begin{proposition}\label{prop:gap_prob_unitary_matrix}
    We have the following unitary matrix integral formula for the gap probabilities for the squared type $q$-Plancherel measure,
    \begin{align}
        \mathbb{P}_{q\text{PP}}[\ell(\lambda) \le N] = \frac{Z_N}{Z_\infty}
        \, , \qquad 
        \mathbb{P}_{q\text{PP}}[\lambda_1 \le N] = \frac{\check{Z}_N}{\check{Z}_\infty} \, ,
        \label{eq:gap_prob_unitary_matrix}
    \end{align}
    where we define the integral over the unitary group,
    \begin{align}
        Z_N = \int_{\mathrm{U}(N)} \det\mathbb{I}(U;\xi) \dd{U} \, , \qquad 
        \check{Z}_N = \int_{\mathrm{U}(N)} \det\check{\mathbb{I}}(U;\xi) \dd{U} \, ,
    \end{align}
    with
    \begin{subequations}\label{eq:I_definition}
    \begin{align}
        \mathbb{I}(z;\xi) & = (\xi q^{\frac{1}{2}} z, \xi q^{\frac{1}{2}} z^{-1};q)_\infty^{-1} = \exp\left( - \sum_{n \ge 1} \frac{\xi^{n}(z^n + z^{-n})}{n(q^{\frac{n}{2}} - q^{-\frac{n}{2}})} \right) \, , \\
        \check{\mathbb{I}}(z;\xi) & = (-\xi q^{\frac{1}{2}} z, -\xi q^{\frac{1}{2}} z^{-1}; q)_{\infty} = \exp\left( \sum_{n \ge 1} \frac{(-\xi)^{n}(z^n + z^{-n})}{n(q^{\frac{n}{2}} - q^{-\frac{n}{2}})} \right) \, ,
    \end{align}        
    \end{subequations}
    and $Z_\infty = \check{Z}_\infty = M(\xi,q)$.
\end{proposition}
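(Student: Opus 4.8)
The plan is to realize the two gap probabilities as length‑restricted sums over the Schur measure and to collapse each such sum into a unitary matrix integral, using the Cauchy identities~\eqref{eq:Cauchy_ids} together with the Schur polynomial orthogonality~\eqref{eq:Schur_orthonormal}. By Proposition~\ref{prop:Schur_to_qPlancherel}, $\mu_{q\text{PP}_S}$ is the Schur measure $\mu_\text{S}[\lambda;X,Y]$ with $X=Y=\xi q^{\frac12}(1,q,q^2,\ldots)$: this geometric alphabet has power sums $t_n=\tilde t_n=\frac1n\sum_{k\ge0}(\xi q^{\frac12}q^k)^n=-\xi^n/(n(q^{\frac n2}-q^{-\frac n2}))$, matching~\eqref{eq:sq_specialization}, and since $|\xi q^{\frac12}|<1$ every Cauchy series below converges absolutely. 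First I would record the normalization $Z_\infty=Z(X,Y)=\prod_{i,j\ge1}(1-x_iy_j)^{-1}=\exp\bigl(\sum_{n\ge1}\frac{\xi^{2n}}{n(q^{\frac n2}-q^{-\frac n2})^2}\bigr)$, which equals $M(\xi;q)$ by~\eqref{eq:MacMahon_fn} (Proposition~\ref{prop:MacMahon_PE}); the same value will reappear as $\check Z_\infty$ below.

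For the length statistic, write $\mathbb{P}_{q\text{PP}}[\ell(\lambda)\le N]=M(\xi;q)^{-1}\sum_{\ell(\lambda)\le N}s_\lambda(X)s_\lambda(Y)$. I would substitute the Cauchy expansion $\sum_\lambda s_\lambda(X)s_\lambda(z_1,\ldots,z_N)=\prod_{i,a}(1-x_iz_a)^{-1}$ and $\sum_\mu s_\mu(Y)\overline{s_\mu(z_1,\ldots,z_N)}=\sum_\mu s_\mu(Y)s_\mu(z_1^{-1},\ldots,z_N^{-1})=\prod_{j,a}(1-y_jz_a^{-1})^{-1}$, valid on $\mathbb{T}_N$ since $\overline{s_\mu(z_1,\ldots,z_N)}=s_\mu(z_1^{-1},\ldots,z_N^{-1})$ there, into the left‑hand side of~\eqref{eq:Schur_orthonormal}. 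Because $s_\nu(z_1,\ldots,z_N)=0$ whenever $\ell(\nu)>N$, the orthogonality relation retains exactly the diagonal terms $\lambda=\mu$ with $\ell(\lambda)\le N$, so that
\begin{align*}
\sum_{\ell(\lambda)\le N}s_\lambda(X)s_\lambda(Y)
&=\frac{1}{N!}\oint_{\mathbb{T}_N}\prod_{a=1}^N\mathbb{I}(z_a;\xi)\prod_{1\le a<b\le N}|z_a-z_b|^2\prod_{a=1}^N\frac{\dd{z_a}}{2\pi\ii z_a}\\
&=\int_{\mathrm{U}(N)}\det\mathbb{I}(U;\xi)\,\dd{U},
\end{align*}
where $\mathbb{I}(z;\xi)=\prod_i(1-x_iz)^{-1}(1-x_iz^{-1})^{-1}=(\xi q^{\frac12}z,\xi q^{\frac12}z^{-1};q)_\infty^{-1}$ as in~\eqref{eq:I_definition}, and the last equality is the Weyl integration formula on $\mathrm{U}(N)$, under which $\det\mathbb{I}(U;\xi)=\prod_{a=1}^N\mathbb{I}(z_a;\xi)$ for the eigenvalues $z_a$ of $U$. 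Dividing by $Z_\infty=M(\xi;q)$ gives the first identity in~\eqref{eq:gap_prob_unitary_matrix}.

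For $\lambda_1$, I would use transposition, $\lambda_1\le N\iff\ell(\lambda^{\rmT})\le N$, so that $\sum_{\lambda_1\le N}s_\lambda(X)s_\lambda(Y)=\sum_{\ell(\mu)\le N}s_{\mu^{\rmT}}(X)s_{\mu^{\rmT}}(Y)$. Now the dual Cauchy identity $\sum_\mu s_{\mu^{\rmT}}(X)s_\mu(z_1,\ldots,z_N)=\prod_{i,a}(1+x_iz_a)$, together with $\sum_\nu s_{\nu^{\rmT}}(Y)s_\nu(z_1^{-1},\ldots,z_N^{-1})=\prod_{j,a}(1+y_jz_a^{-1})$, replaces the resolvent factors, and rerunning the orthogonality computation verbatim yields $\sum_{\lambda_1\le N}s_\lambda(X)s_\lambda(Y)=\int_{\mathrm{U}(N)}\det\check{\mathbb{I}}(U;\xi)\,\dd{U}$ with $\check{\mathbb{I}}(z;\xi)=\prod_i(1+x_iz)(1+x_iz^{-1})=(-\xi q^{\frac12}z,-\xi q^{\frac12}z^{-1};q)_\infty$ as in~\eqref{eq:I_definition}. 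Letting $N\to\infty$ makes the constraint vacuous, so $\check Z_\infty=\sum_{\lambda\in\mathsf{P}}s_\lambda(X)s_\lambda(Y)=Z_\infty=M(\xi;q)$, and dividing gives the second identity.

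The main obstacle is the rigorous interchange of the infinite $\lambda$‑ and $\mu$‑sums with the torus integral, together with the vanishing of the off‑diagonal terms; this rests on the absolute convergence of the Cauchy series for the geometric alphabet $X=Y=\xi q^{\frac12}(1,q,q^2,\ldots)$ (ensured by $|\xi q^{\frac12}|<1$), on $\overline{s_\mu(z_1,\ldots,z_N)}=s_\mu(z_1^{-1},\ldots,z_N^{-1})$ on $\mathbb{T}_N$, and on $s_\nu(z_1,\ldots,z_N)=0$ for $\ell(\nu)>N$, which is precisely what turns the two unrestricted generating functions into the length‑restricted sum after applying~\eqref{eq:Schur_orthonormal}. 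Everything else is bookkeeping with the specialization~\eqref{eq:sq_specialization} and the elementary identification of $\prod_i(1\mp x_i z)^{\mp1}$ with the $q$‑Pochhammer factors in~\eqref{eq:I_definition}.
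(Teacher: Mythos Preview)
Your proof is correct and follows essentially the same route as the paper: expand the weight via the (dual) Cauchy identities, apply the Schur orthonormality~\eqref{eq:Schur_orthonormal} to collapse to a length-restricted sum, and identify the torus integral with the $\mathrm{U}(N)$ integral via Weyl integration. The only difference is that you run the argument starting from the probability side while the paper starts from the unitary integral, and you add an explicit remark on absolute convergence that the paper leaves implicit.
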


\begin{proof}
    Diagonalizing the unitary matrix, we have
    \begin{align}
        Z_N & = \frac{1}{N!} \oint_{\mathbb{T}_N} \prod_{1 \le i < j \le N} |z_i - z_j|^2 \prod_{i=1}^N (\xi q^{\frac{1}{2}} z_i, \xi q^{\frac{1}{2}} z_i^{-1}; q)_{\infty}^{-1} \prod_{i=1}^N \frac{\dd{z}_i}{2 \pi \ii z_i} \nonumber \\
        & = \frac{1}{N!} \oint_{\mathbb{T}_N} \prod_{1 \le i < j \le N} |z_i - z_j|^2 \prod_{i=1}^N \dd{\mu}(z_i) 
    \end{align}
    where we denote by $\mathbb{T}_N$ the $N$-dimensional torus, and we define the measure on the unit circle, $\dd{\mu}(z) = \mathbb{I}(z;\xi) \frac{\dd{z}}{2 \pi \ii z}$.
    Similarly, defining $\mathrm{d}\check{\mu}(z) = \check{\mathbb{I}}(z;\xi) \frac{\dd{z}}{2 \pi \ii z}$, we have
    \begin{align}
        \check{Z}_N & = \frac{1}{N!} \oint_{\mathbb{T}_N} \prod_{1 \le i < j \le N} |z_i - z_j|^2 \prod_{i=1}^N \dd{\check{\mu}}(z_i) \, .
    \end{align}
    Applying the Cauchy identities~\eqref{eq:Cauchy_ids}, we have
    \begin{align}
        & \prod_{i=1}^N \mathbb{I}(z_i;\xi) = \exp\left( - \sum_{i=1}^N \sum_{n \ge 1} \frac{\xi^{n}(z_i^n + z_i^{-n})}{n(q^{\frac{n}{2}} - q^{-\frac{n}{2}})} \right) \nonumber \\ 
        & = \sum_{\lambda} s_\lambda(z_1,\ldots,z_N) s_\lambda\left( \left\{t_n = - \frac{\xi^n}{n(q^{\frac{n}{2}}-q^{-\frac{n}{2}})}\right\}\right) \sum_{\lambda'} s_{\lambda'}(z_1^{-1},\ldots,z_N^{-1}) s_{\lambda'}\left( \left\{t_n = - \frac{\xi^n}{n(q^{\frac{n}{2}}-q^{-\frac{n}{2}})}\right\}\right) \, .
    \end{align}
    Note that $s_\lambda(z_1,\ldots,z_N) = 0$ for $\ell(\lambda) > N$.
    Hence the partition sum is restricted to $\lambda$, $\lambda' \in \mathsf{P}_N$.
    Similarly, we have
    \begin{align}
        & \prod_{i=1}^N \check{\mathbb{I}}(z_i;\xi) = \exp\left( \sum_{i=1}^N \sum_{n \ge 1} \frac{(-\xi)^{n}(z_i^n + z_i^{-n})}{n(q^{\frac{n}{2}} - q^{-\frac{n}{2}})} \right) \nonumber \\ 
        & = \sum_{\lambda} s_{\lambda^{\rmT}}(z_1,\ldots,z_N) s_\lambda\left( \left\{t_n = - \frac{\xi^n}{n(q^{\frac{n}{2}}-q^{-\frac{n}{2}})}\right\}\right) \sum_{\lambda'} s_{\lambda'^{\rmT}}(z_1^{-1},\ldots,z_N^{-1}) s_{\lambda'}\left( \left\{t_n = - \frac{\xi^n}{n(q^{\frac{n}{2}}-q^{-\frac{n}{2}})}\right\}\right) \, .
    \end{align}    
    In this case, the sum is restricted to $\lambda^{\rmT}$, $\lambda'^{\rmT} \in \mathsf{P}_N$.    
    Then, by the orthonormal condition of the Schur polynomials~\eqref{eq:Schur_orthonormal}, we obtain
    \begin{subequations}
        \begin{align}
            Z_N & = \sum_{\lambda \in \mathsf{P}_N} s_\lambda\left( \left\{t_n = - \frac{\xi^n}{n(q^{\frac{n}{2}}-q^{-\frac{n}{2}})}\right\}\right) s_{\lambda}\left( \left\{t_n = - \frac{\xi^n}{n(q^{\frac{n}{2}}-q^{-\frac{n}{2}})}\right\}\right) = Z_\infty \mathbb{P}_{q\text{PP}}[\ell(\lambda) \le N] \, , \\
            \check{Z}_N & = \sum_{\lambda^{\rmT} \in \mathsf{P}_N} s_\lambda\left( \left\{t_n = - \frac{\xi^n}{n(q^{\frac{n}{2}}-q^{-\frac{n}{2}})}\right\}\right) s_{\lambda}\left( \left\{t_n = - \frac{\xi^n}{n(q^{\frac{n}{2}}-q^{-\frac{n}{2}})}\right\}\right) = \check{Z}_\infty \mathbb{P}_{q\text{PP}}[\ell(\lambda^{\rmT}) \le N] \, .
        \end{align}
    \end{subequations}
    Recalling $\ell(\lambda^{\rmT}) = \lambda_1$ for the latter case, we obtain the formula~\eqref{eq:gap_prob_unitary_matrix}.
\end{proof}

The weight functions $\mathbb{I}(z;\xi)$ and $\check{\mathbb{I}}(z;\xi)$ take positive real values on the unit circle $z \in \mathbb{T}$ for $q, \xi \in [0,1)$.
Putting $z = \ee^{\ii \theta}$ in the definition~\eqref{eq:I_definition}, we have
\begin{align}
    \mathbb{I}(\ee^{\ii \theta};\xi) = \prod_{n = 0}^\infty (1 + \xi^2 q^{2n+1} - 2 \xi q^{n+\frac{1}{2}} \cos \theta)^{-1} \, , \quad
    \check{\mathbb{I}}(\ee^{\ii \theta};\xi) = \prod_{n = 0}^\infty (1 + \xi^2 q^{2n+1} + 2 \xi q^{n+\frac{1}{2}} \cos \theta) \, .
\end{align}
Under this parametrization, we have an alternative expression of the unitary matrix integrals,
\begin{subequations}
    \begin{align}
        Z_N & = \frac{1}{N!} \int_{-\pi}^\pi \prod_{1 \le i < j \le N} \left( 2 \sin \left( \frac{\theta_i - \theta_j}{2} \right) \right)^2 \prod_{i=1}^N \mathbb{I}(\ee^{\ii \theta_i};\xi) \frac{\dd{\theta}_i}{2 \pi} \, , \\
        \check{Z}_N & = \frac{1}{N!} \int_{-\pi}^\pi \prod_{1 \le i < j \le N} \left( 2 \sin \left( \frac{\theta_i - \theta_j}{2} \right) \right)^2 \prod_{i=1}^N \check{\mathbb{I}}(\ee^{\ii \theta_i};\xi) \frac{\dd{\theta}_i}{2 \pi} \, ,        
    \end{align}
\end{subequations}

\begin{proposition}\label{prop:Toeplitz_Z}
    The unitary matrix integrals are given by the Toeplitz determinants,
    \begin{align}
        Z_N = \det_{0 \le i, j \le N-1} I_{-i+j} \, , \qquad 
        \check{Z}_N = \det_{0 \le i, j \le N-1} \check{I}_{-i+j} \, .
    \end{align}
    where $I_n = I^{(1)}_n(2\xi q^{\frac{1}{2}};q)$ and $\check{I}_n = q^{\frac{n^2}{2}} I^{(2)}_n(2\xi;q)$ are the modified $q$-Bessel functions (see \S\ref{sec:q-Bessel} for details).
\end{proposition}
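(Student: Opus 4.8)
The plan is to turn each unitary matrix integral into a Toeplitz determinant by the classical Heine identity (the Andr\'eief/Cauchy--Binet expansion of the squared Vandermonde against a product measure), and then to read off the Fourier coefficients of the symbol by expanding the $q$-Pochhammer products defining $\mathbb{I}$ and $\check{\mathbb{I}}$. Concretely, for a weight $f$ on the unit circle with Laurent expansion $f(z)=\sum_{n\in\mathbb{Z}}(f)_n z^n$, I would expand the two Vandermonde factors $\prod_{i<j}(z_i-z_j)$ and $\prod_{i<j}(\overline{z_i}-\overline{z_j})$ as signed sums over $\mathfrak{S}_N$, integrate term by term against $\prod_i f(z_i)\frac{\dd z_i}{2\pi\ii z_i}$, and use the orthogonality $\frac{1}{2\pi\ii}\oint_{\mathbb{T}}z^{m}\frac{\dd z}{z}=\delta_{m,0}$ to collapse the $N$-fold integral to $N!\,\det_{0\le i,j\le N-1}(f)_{i-j}$. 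Applied to $f=\mathbb{I}(\,\cdot\,;\xi)$ and $f=\check{\mathbb{I}}(\,\cdot\,;\xi)$ with the integral representations of $Z_N,\check Z_N$ from Proposition~\ref{prop:gap_prob_unitary_matrix}, this gives $Z_N=\det_{0\le i,j\le N-1}(\mathbb{I})_{i-j}$ and $\check Z_N=\det_{0\le i,j\le N-1}(\check{\mathbb{I}})_{i-j}$. Since $\mathbb{I}(z;\xi)$ and $\check{\mathbb{I}}(z;\xi)$ are invariant under $z\mapsto z^{-1}$, these coefficients are even in the index, so $i-j$ may be replaced by $-i+j$, which is the form in the statement.

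It then remains to identify $(\mathbb{I})_n$ and $(\check{\mathbb{I}})_n$ with the modified $q$-Bessel functions. For $q\in[0,1)$ the $q$-exponential expansions $(x;q)_\infty^{-1}=\sum_{k\ge0}x^k/(q;q)_k$ and $(x;q)_\infty=\sum_{k\ge0}(-1)^kq^{\binom k2}x^k/(q;q)_k$ converge absolutely, so one obtains
\begin{align}
    \mathbb{I}(z;\xi) &= \sum_{k,l\ge0}\frac{(\xi q^{\frac12})^{k+l}}{(q;q)_k(q;q)_l}\,z^{k-l}\,, &
    \check{\mathbb{I}}(z;\xi) &= \sum_{k,l\ge0}\frac{q^{\binom k2+\binom l2}(\xi q^{\frac12})^{k+l}}{(q;q)_k(q;q)_l}\,z^{k-l}\,,
\end{align}
where in $\check{\mathbb{I}}$ the sign $(-1)^k$ from the expansion meets $(-\xi q^{\frac12})^k$ and cancels. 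Collecting the coefficient of $z^n$ by setting $k=n+l$ yields, for $n\ge0$,
\begin{align}
    (\mathbb{I})_n &= (\xi q^{\frac12})^{n}\sum_{l\ge0}\frac{(\xi q^{\frac12})^{2l}}{(q;q)_{n+l}(q;q)_l}\,, &
    (\check{\mathbb{I}})_n &= q^{\frac{n^2}{2}}\,\xi^{n}\sum_{l\ge0}\frac{q^{l^2+nl}\,\xi^{2l}}{(q;q)_{n+l}(q;q)_l}\,,
\end{align}
using in the second identity $\binom{n+l}{2}+\binom l2+\tfrac{n+2l}{2}=\tfrac{n^2}{2}+l^2+nl$. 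These are precisely the series defining $I_n=I^{(1)}_n(2\xi q^{\frac12};q)$ and $\check I_n=q^{\frac{n^2}{2}}I^{(2)}_n(2\xi;q)$ to be recalled in \S\ref{sec:q-Bessel}; for $n<0$ the $z\mapsto z^{-1}$ symmetry gives $(\mathbb{I})_n=I_{|n|}$ and likewise for $\check{\mathbb{I}}$. Substituting these into the determinants finishes the argument.

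The only step that requires genuine care is this last identification: one must match the double $q$-hypergeometric series above against the standard series representations of the Jackson $q$-Bessel functions $J^{(1)},J^{(2)}$ and their modified versions, and in particular track the Gaussian factor $q^{n^2/2}$ that is absorbed into the definition of $\check I_n$ — it appears from $\binom{n+l}{2}+\binom l2$ combined with the half-integer powers of $q$ coming from the argument $\xi q^{1/2}$. The Heine identity itself is completely classical (and could instead be quoted directly, as in Borodin--Okounkov~\cite{Borodin2000}), and all interchanges of summation and convergence questions are immediate for $q,\xi\in[0,1)$, so I do not anticipate any real obstacle there.
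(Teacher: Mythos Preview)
Your proof is correct and follows essentially the same route as the paper: Andr\'eief's formula applied to the diagonalized unitary integral, followed by identification of the Fourier coefficients of the symbol with modified $q$-Bessel functions. The only cosmetic difference is that the paper quotes the generating-function identities~\eqref{eq:Bessel_gen_fn_mod} directly, whereas you re-derive them by expanding the $q$-Pochhammer products via the $q$-exponential formulas and matching to the hypergeometric series for $I_n^{(1)}$ and $I_n^{(2)}$; your computation of the Gaussian factor $q^{n^2/2}$ is correct.
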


\begin{proof}
    From the generating functions of the modified $q$-Bessel function \eqref{eq:Bessel_gen_fn_mod}, we have
    \begin{align}
        \mathbb{I}(z;\xi) = \sum_{n \in \mathbb{Z}} z^n I_n \, , \qquad 
        \check{\mathbb{I}}(z;\xi) = \sum_{n \in \mathbb{Z}} z^n \check{I}_n \, .
    \end{align}
    Recalling $\prod_{1 \le i < j \le N}|z_i - z_j|^2 = \det_{1 \le i, j \le N} (z_i^{j-1}) \det_{1 \le i, j \le N} (z_i^{-j+1})$ for $z_1,\ldots,z_N \in \mathbb{T}$, we may use Andréief's formula to evaluate the integral as follows,
    \begin{subequations}
    \begin{align}
        Z_N & = \det_{0 \le i, j \le N-1} \left( \oint_{\mathbb{T}} z^{i-j} \dd{\mu}(z) \right) = \det_{0 \le i, j \le N - 1} I_{-i+j} \, , \\
        \check{Z}_N & = \det_{0 \le i, j \le N-1} \left( \oint_{\mathbb{T}} z^{i-j} \dd{\check{\mu}}(z) \right) = \det_{0 \le i, j \le N - 1} \check{I}_{-i+j} \, .
    \end{align}        
    \end{subequations}
\end{proof}

\begin{remark}\label{rmk:inversion_symmetry_I}
    Since $\mathbb{I}(z;\xi) = \mathbb{I}(z^{-1};\xi)$ and $\check{\mathbb{I}}(z;\xi) = \check{\mathbb{I}}(z^{-1};\xi)$, we have the symmetry, $I_n = I_{-n}$ and $\check{I}_n = \check{I}_{-n}$.
\end{remark}

In the $q$-deformed setup, we have two different Toeplitz determinant formulas for the gap probability for $\lambda_1 < N$ and $\lambda_1^{\rmT} < N$ (see Fig.~\ref{fig:prof_fn}). 
In the case of the ordinary Poissonized Plancherel measure, these two situations are equivalent due to the symmetry of the measure under $\eta \leftrightarrow -\eta$ in \eqref{eq:PP_measure}.
In fact, both $I_n = I_n^{(1)}(2\xi q^{\frac{1}{2}};q)$ and $\check{I}_n = q^{\frac{n^2}{2}} I_n^{(2)}(2\xi;q)$ are reduced to the ordinary modified Bessel function $I_n(2\eta)$ in the limit $q \to 1$ under the parametrization $\xi = (1-q)\eta$.

\subsubsection*{Hypergeometric formula}
The hypergeometric form of the modified $q$-Bessel function can be obtained using the residue formula.
Recall that the weight function $\mathbb{I}$ has poles at $\xi q^{\mathbb{Z}'_{> 0}}$ in the domain $|z| < 1$ and $\xi^{-1} q^{\mathbb{Z}'_{< 0}}$ in the domain $|z| > 1$.
Hence, the integral on the unit circle is given by the residues of the former poles,
\begin{align}
    I_n = I_{-n} = \oint_{\mathbb{T}} z^{n} d\mu(z) 
    & = \frac{\xi^n q^{\frac{n}{2}}}{(q,\xi^2 q;q)_\infty} \sum_{m=0}^\infty \frac{(\xi^2 q;q)_m}{(q;q)_m} (-1)^m q^{m \choose 2} q^{(n+1)m} \nonumber \\
    & = \frac{\xi^n q^{\frac{n}{2}}}{(q,\xi^2 q;q)_\infty} {}_1\phi_1\left(\begin{matrix}
        \xi^2 q \\ 0
    \end{matrix};q;q^{n+1}\right) \, ,
\end{align}
which agrees with the hypergeometric formula of $I_n^{(1)}(2 \xi q^{\frac{1}{2}};q)$~\eqref{eq:hypergeometric_I}.

\subsection{Orthogonal polynomials on the unit circle}\label{sec:OPUC}

We define inner products on the unit circle, 
\begin{align}
    \left< f, g \right>_{\mathbb{I}} = \oint_{\mathbb{T}} f(z) \overline{g(z)} \dd{\mu}(z) \, , \qquad 
    \left< f, g \right>_{\check{\mathbb{I}}} = \oint_{\mathbb{T}} f(z) \overline{g(z)} \dd{\check{\mu}}(z) \, .
\end{align}
Under these inner products, we define orthonormal polynomials,
\begin{align}
    \left< p_n, p_m \right>_{\mathbb{I}} = \delta_{n,m} \, , \qquad 
    \left< \check{p}_n, \check{p}_m \right>_{\check{\mathbb{I}}} = \delta_{n,m} \, ,  
\end{align}
where
\begin{align}
    p_n(z) = \kappa_n z^n + \cdots \, , \quad 
    \check{p}_n(z) = \check{\kappa}_n z^n + \cdots \, , \quad \kappa_n,\, \check{\kappa}_n > 0 \, .
\end{align}
We denote the corresponding monic polynomials by
\begin{align}
    \pi_n(z) = \frac{1}{\kappa_n} p_n(z) \, , \qquad
    \check{\pi}_n(z) = \frac{1}{\check{\kappa}_n} \check{p}_n(z) \, ,
\end{align}
where $\pi_0 = \check{\pi}_0 =1$.
Since the weight functions are symmetric, $\mathbb{I}(z;\xi) = \mathbb{I}(z^{-1};\xi)$ and $\check{\mathbb{I}}(z;\xi) = \check{\mathbb{I}}(z^{-1};\xi)$, all the coefficients of these orthogonal polynomials are real, i.e., $\overline{p_n(z)} = p_n(z^{-1})$ and $\overline{\check{p}_n(z)} = \check{p}_n(z^{-1})$ for $z \in \mathbb{T}$.

\begin{lemma}
The unitary matrix integrals are given by the coefficients of the orthogonal polynomials,
\begin{align}
    Z_N = \prod_{n=0}^{N-1} \kappa_n^{-2} \, , \qquad
    \check{Z}_N = \prod_{n=0}^{N-1} \check{\kappa}_n^{-2} \, ,
\end{align}
and thus
\begin{align}
\kappa_n^2 = \frac{Z_{n}}{Z_{n+1}} \, , \qquad 
\check{\kappa}_n^2 = \frac{\check{Z}_{n}}{\check{Z}_{n+1}} \, , \qquad n \ge 0 \, .
\end{align}
\end{lemma}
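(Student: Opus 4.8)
The plan is to reduce the statement to the classical Heine--Andréief identity relating a Vandermonde-weighted $N$-fold integral to a Gram determinant of polynomials, and then to invoke the orthogonality of $\{\pi_n\}$ and $\{\check\pi_n\}$. First I would recall from the proof of Proposition~\ref{prop:Toeplitz_Z} the diagonalized form
\begin{align}
    Z_N = \frac{1}{N!} \oint_{\mathbb{T}_N} \prod_{1 \le i < j \le N} |z_i - z_j|^2 \prod_{i=1}^N \dd{\mu}(z_i) \, ,
\end{align}
together with the factorization $\prod_{1\le i<j\le N}|z_i-z_j|^2 = \det_{1\le i,j\le N}(z_i^{j-1}) \, \det_{1\le i,j\le N}(z_i^{-(j-1)})$, valid for $z_1,\ldots,z_N \in \mathbb{T}$.

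Next, since each $\pi_n$ is monic of degree $n$, elementary column operations replace $\det_{1\le i,j\le N}(z_i^{j-1})$ by $\det_{1\le i,j\le N}(\pi_{j-1}(z_i))$ without changing its value; likewise, on the unit circle $\det_{1\le i,j\le N}(z_i^{-(j-1)}) = \det_{1\le i,j\le N}(\overline{z_i^{\,j-1}}) = \det_{1\le i,j\le N}(\overline{\pi_{j-1}(z_i)})$, where I use that the $\pi_n$ have real coefficients (noted above), so $\overline{\pi_n(z)} = \pi_n(z^{-1})$ for $z \in \mathbb{T}$. Applying Andréief's identity exactly as in the proof of Proposition~\ref{prop:Toeplitz_Z}, but now with the basis $\{\pi_{j-1}\}$ in place of $\{z^{j-1}\}$, gives
\begin{align}
    Z_N = \det_{1 \le j, k \le N} \left< \pi_{j-1}, \pi_{k-1} \right>_{\mathbb{I}} \, .
\end{align}
By the defining relation $p_n = \kappa_n \pi_n$ together with $\left< p_n, p_m \right>_{\mathbb{I}} = \delta_{n,m}$, the Gram matrix on the right is diagonal with entries $\left< \pi_n, \pi_n \right>_{\mathbb{I}} = \kappa_n^{-2} \left< p_n, p_n \right>_{\mathbb{I}} = \kappa_n^{-2}$, whence $Z_N = \prod_{n=0}^{N-1} \kappa_n^{-2}$. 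The identical argument with $\dd{\mu}$, $\pi_n$, $\kappa_n$ replaced by $\dd{\check{\mu}}$, $\check{\pi}_n$, $\check{\kappa}_n$ yields $\check{Z}_N = \prod_{n=0}^{N-1} \check{\kappa}_n^{-2}$, and dividing consecutive products gives $\kappa_n^2 = Z_n/Z_{n+1}$ and $\check{\kappa}_n^2 = \check{Z}_n/\check{Z}_{n+1}$ for $n \ge 0$.

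There is essentially no hard step here; the only point requiring a word of justification is the existence of the orthonormal polynomials with $\kappa_n, \check{\kappa}_n > 0$ for every $n$, which follows because $\mathbb{I}(\ee^{\ii\theta};\xi)$ and $\check{\mathbb{I}}(\ee^{\ii\theta};\xi)$ are strictly positive on $\mathbb{T}$ for $q,\xi \in [0,1)$ (as exhibited explicitly above), so the associated moment functionals are positive-definite and Gram--Schmidt applied to $1, z, z^2, \ldots$ never degenerates. One could alternatively phrase the computation purely in terms of the monic polynomials: Andréief applied directly to $\{\pi_n\}$ gives $Z_N = \prod_{n=0}^{N-1} h_n$ with $h_n = \left< \pi_n, \pi_n \right>_{\mathbb{I}}$, and $h_n = \kappa_n^{-2}$ is then recorded a posteriori; this avoids even mentioning positivity, at the cost of a slightly weaker intermediate statement.
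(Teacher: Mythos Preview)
Your proof is correct and is precisely the standard Heine--Andr\'eief argument; the paper itself gives no proof here but simply refers to standard references (Mehta, Forrester, Eynard), which contain exactly this computation. In that sense your proposal fills in what the paper omits, by the same route those references take.
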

\begin{proof}
    See, e.g., \cite{Mehta:2004RMT,Forrester:2010,Eynard:2015aea}.
\end{proof}

The monic orthogonal polynomials are given by the unitary matrix integral with the characteristic polynomial insertion (Heine's formula),
\begin{subequations}
\begin{align}
    \pi_n(z) & = \frac{1}{Z_n} \int_{\mathrm{U}(n)} \det(z - U) \det \mathbb{I}(U;\xi) \dd{U} \, , \\
    {\check\pi}_n(z) & = \frac{1}{\check{Z}_n} \int_{\mathrm{U}(n)} \det(z - U) \det \check{\mathbb{I}}(U;\xi) \dd{U} \, .
\end{align}    
\end{subequations}
Let $x_n = \pi_n(0)$, $y_n = \check{\pi}_n(0)$.
Then, we have the following determinantal formula,
\begin{subequations}
\begin{align}
x_n & = \frac{(-1)^n}{Z_n} \int_{\mathrm{U}(n)} \det U \det \mathbb{I}(U;\xi) \dd{U} = \frac{(-1)^n}{Z_n} \det_{0 \le i, j \le n - 1} I_{-i+j-1} \, , \\ 
y_n & = \frac{(-1)^n}{\check{Z}_n} \int_{\mathrm{U}(n)} \det U \det \check{\mathbb{I}}(U;\xi) \dd{U} = \frac{(-1)^n}{\check{Z}_n} \det_{0 \le i, j \le n - 1} \check{I}_{-i+j-1} \, .
\end{align}    
\end{subequations}
As mentioned in Remark~\ref{rmk:inversion_symmetry_I}, we may also realize $x_n$ and $y_n$ from the unitary matrix integral with insertion of $\det U^{-1}$ instead of $\det U$ due to the symmetry of the weight function, $\mathbb{I}(z;\xi) = \mathbb{I}(z^{-1};\xi)$.

\begin{lemma}\label{lemma:Zx_relation}
    The following relations hold,
\begin{align}
\frac{\kappa_{n-1}^2}{\kappa_n^2} = \frac{Z_{n+1}Z_{n-1}}{Z_n^2} = 1 - x_n^2 \, , \qquad 
\frac{{\check\kappa}_{n-1}^2}{\check{\kappa}_n^2} = \frac{\check{Z}_{n+1}\check{Z}_{n-1}}{\check{Z}_n^2} = 1 - y_n^2 \, .
\end{align}
\end{lemma}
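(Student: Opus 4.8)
The plan is to get the first equality for free from the preceding Lemma, and the second from a Desnanot--Jacobi (Dodgson condensation) identity applied to the Toeplitz determinants of Proposition~\ref{prop:Toeplitz_Z}. Since $Z_N = \prod_{n=0}^{N-1}\kappa_n^{-2}$ gives $\kappa_n^2 = Z_n/Z_{n+1}$, one has immediately
\begin{align}
\frac{\kappa_{n-1}^2}{\kappa_n^2} = \frac{Z_{n-1}}{Z_n}\cdot\frac{Z_{n+1}}{Z_n} = \frac{Z_{n+1}Z_{n-1}}{Z_n^2} \, ,
\end{align}
and likewise for the checked quantities. So everything reduces to the identity $Z_{n+1}Z_{n-1} = Z_n^2 - (x_n Z_n)^2$, and its checked analogue.

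For this, write $M^{(n)} = (I_{-i+j})_{0\le i,j\le n}$, the $(n+1)\times(n+1)$ Toeplitz matrix with $\det M^{(n)} = Z_{n+1}$ by Proposition~\ref{prop:Toeplitz_Z}. Deleting the first (resp.\ last) row and the first (resp.\ last) column of a Toeplitz matrix produces again a Toeplitz matrix of the same symbol with shifted indices, so the principal corner minors obtained by removing row/column $0$ or row/column $n$ both equal $Z_n$, the central minor obtained by removing rows and columns $\{0,n\}$ equals $Z_{n-1}$, and the two anti-diagonal minors obtained by removing (row $0$, column $n$) and (row $n$, column $0$) equal $\det_{0\le i,j\le n-1} I_{-i+j-1}$ and $\det_{0\le i,j\le n-1} I_{-i+j+1}$ respectively. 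By Remark~\ref{rmk:inversion_symmetry_I} ($I_n = I_{-n}$), transposing one of these matrices and applying the symmetry entrywise shows the two anti-diagonal minors coincide; call the common value $A_n$. Recalling the formula just before the Lemma, $x_n = (-1)^n A_n / Z_n$, hence $x_n^2 = A_n^2/Z_n^2$.

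Now apply Desnanot--Jacobi to $M^{(n)}$ with the first and last rows and columns in the distinguished roles:
\begin{align}
\det M^{(n)} \cdot \det \big(M^{(n)}\big){}^{\{0,n\}}_{\{0,n\}} = \det \big(M^{(n)}\big){}^{0}_{0}\,\det \big(M^{(n)}\big){}^{n}_{n} - \det \big(M^{(n)}\big){}^{0}_{n}\,\det \big(M^{(n)}\big){}^{n}_{0} \, ,
\end{align}
which by the identifications above reads $Z_{n+1}Z_{n-1} = Z_n^2 - A_n^2$. Dividing by $Z_n^2$ gives $Z_{n+1}Z_{n-1}/Z_n^2 = 1 - x_n^2$, completing the first chain; the base case $n=1$ is covered by the same formula with $Z_0 = 1$ the empty determinant. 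The identical argument with $\mathbb I \to \check{\mathbb I}$, hence $I_n \to \check I_n$, $Z_N \to \check Z_N$, $x_n \to y_n$, and using $\check I_n = \check I_{-n}$, yields the checked chain.

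The only delicate point is the index bookkeeping in the shifted minors, and in particular verifying that the two anti-diagonal minors are equal --- which is exactly where the reflection symmetry $I_n = I_{-n}$ of Remark~\ref{rmk:inversion_symmetry_I} is used; nothing else requires care. (An alternative, equally short route invokes the Szegő recursion for orthogonal polynomials on the unit circle, $\|\pi_n\|^2 = (1-|\pi_n(0)|^2)\|\pi_{n-1}\|^2$, together with $\|\pi_n\|^2 = \kappa_n^{-2}$ and the reality of the coefficients so that $|\pi_n(0)|^2 = x_n^2$; but the Dodgson condensation argument keeps everything at the level of the explicit Toeplitz determinants already in play.)
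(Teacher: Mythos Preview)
Your argument is correct. The paper itself does not prove this lemma but simply refers the reader to \cite[\S1.1]{Adler:2003CMP}; your Desnanot--Jacobi computation on the Toeplitz matrix of Proposition~\ref{prop:Toeplitz_Z}, together with the symmetry $I_n=I_{-n}$ of Remark~\ref{rmk:inversion_symmetry_I} to identify the two off-diagonal minors, gives a clean self-contained proof. The alternative you mention via the Szeg\H{o} recursion $\|\pi_n\|^2=(1-|\pi_n(0)|^2)\|\pi_{n-1}\|^2$ is the route most commonly taken in the OPUC literature and is presumably what the cited reference does; your determinantal argument has the virtue of staying entirely within the Toeplitz-determinant framework already set up in the paper.
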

\begin{proof}
    See, e.g.,~\cite[\S1.1]{Adler:2003CMP}.
\end{proof}

We will derive the non-linear recurrence relations for $x_n$ and $y_n$ from the Riemann--Hilbert problem associated with the orthogonal polynomial on the unit circle in \S\ref{sec:RHP}.

\subsection{Relation to $q$-orthogonal polynomials}\label{sec:q-orthogonal_polynomial}

It has been known that all the classical $q$-orthogonal polynomials are obtained from the Askey--Wilson polynomial $\{p_n = p_n(\cdot;a,b,c,d;q)\}_{n\in\mathbb{Z}_{\ge 0}}$, which is the classical $q$-orthogonal polynomial obeying the following condition (e,g., \cite{Ismail2005,Koekoek2010}),  
\begin{align}
    \frac{1}{2\pi} \int_{-1}^1 p_n(x) p_m(x) w(x) \frac{\dd{x}}{\sqrt{1-x^2}} = h_n \delta_{n,m} \, ,
\end{align}
where the weight function is given by
\begin{align}
    w(x) = w(x;a,b,c,d;q) = \frac{h(x,1)h(x,-1)h(x,q^{\frac{1}{2}})h(x,-q^{\frac{1}{2}})}{h(x,a)h(x,b)h(x,c)h(x,d)} \, , \quad |a|, |b|, |c|, |d| < 1 \, ,
\end{align}
with
\begin{align}
    h(x,\alpha) = \prod_{n=0}^\infty (1 + \alpha^2 q^{2n} - 2 \alpha q^{n} x) \, ,
\end{align}
and 
\begin{align}
    h_n = \frac{(abcd q^{n-1};q)_n (abcd q^{2n};q)_\infty}{(q^{n+1},abq^n,acq^n,adq^n,bcq^n,bdq^n,cdq^n;q)_\infty} \, .
\end{align}
Our orthogonal polynomial have a similar structure to this $q$-orthogonal polynomial as follows.

Due to the symmetry of the weight function $\mathbb{I}(z;\xi) = \mathbb{I}(z^{-1};\xi)$, we may instead consider a modified inner product with respect to the weight function,
\begin{align}
    \left< f, g \right> = \oint_{\mathbb{T}} f\left(\frac{z+z^{-1}}{2}\right) \overline{g\left(\frac{z+z^{-1}}{2}\right)} \mathbb{I}(z;\xi) \frac{\dd{z}}{2 \pi \ii z} \, .
\end{align}
Applying the change of variable $z = \ee^{\ii \theta}$, we have
\begin{align}
    \left< f, g \right> & = \int_{-\pi}^\pi f\left(\cos \theta\right) \overline{g}\left(\cos\theta\right) 
    \left( \prod_{n = 0}^\infty (1 + \xi^2 q^{2n+1} - 2 \xi q^{n+\frac{1}{2}} \cos \theta) \right)^{-1} \frac{\dd{\theta}}{2\pi} \nonumber \\
    & = \frac{1}{\pi} \int_{-1}^1 f\left(x\right) \overline{g}\left(x\right) h(x,\xi q^{\frac{1}{2}})^{-1} \frac{\dd{x}}{\sqrt{1 - x^2}} \, , \qquad x = \cos \theta \, .
\end{align}
Compared with the weight function of the Askey--Wilson polynomial, our weight function $h(x;\xi q^{\frac{1}{2}})^{-1}$ cannot be obtained from a naive reduction of that of the Askey--Wilson polynomial, implying that our orthogonal polynomials are not of the classical type.
In fact, our weight function is interpreted as a $q$-analogue of the Bessel transform of the orthogonal polynomial measure on the unit circle~\cite{Golinskii2006}.

Applying the residue formula, we may also write the inner product as the basic hypergeometric series,
\begin{align}
  \left< f, g \right>
  & = \frac{1}{(\xi^2 q;q)_\infty (q;q)_\infty} \sum_{n = 0}^\infty f\left(z_n\right) \bar{g}\left(z_n\right) \frac{(\xi^2 q;q)_n}{(q;q)_n} (-1)^n q^{n \choose 2} q^n \, , \label{eq:Jackson_form}
\end{align}
where we define 
\begin{align}
    z_n = \frac{1}{2} \left( \xi q^{n+\frac{1}{2}} + \xi^{-1} q^{-n-\frac{1}{2}} \right) \, .
\end{align}
This expression is also analogous to the orthogonality condition for the discrete $q$-orthogonal polynomial, also called the $q$-hypergeometric type orthogonal polynomial, e.g., $q$-Racah polynomial, $q$-Hahn polynomial.
This also implies a $q$-analogue of the Bessel transform for the discrete $q$-orthogonal polynomial.

\section{Riemann--Hilbert problem}\label{sec:RHP}

In this Section, we study the Riemann--Hilbert problem associated with the orthogonal polynomials on the unit circle defined in \S\ref{sec:OPUC}.

\subsubsection*{Dual polynomial}
Let $\mathsf{p}$ be a polynomial in $z \in \mathbb{T}$ of degree $n$.
We define a dual polynomial of $\mathsf{p}$ by
\begin{align}
  \mathsf{p}^*(z) = \overline{\mathsf{p}}(z^{-1}) z^n \, .
\end{align}

\subsubsection*{Cauchy transform}
We define the Cauchy transform of $\mathsf{p}$,
\begin{align}
    \mathsf{C}[\mathsf{p}](z) = \frac{1}{2 \pi \ii} \oint_{\mathbb{T}} \frac{\mathsf{p}(w)}{w-z} \dd{w} = \frac{1}{2 \pi \ii} \oint_{\mathbb{T}} \frac{\mathsf{p}(w)}{1-z/w} \frac{\dd{w}}{w} \, .
\end{align}
Since this Cauchy transform is defined by the integral on $\mathbb{T}$, it may cause a singularity, which plays an important role in the Riemann--Hilbert problem as discussed below.

\subsection{Weight function $\mathbb{I}$}

We focus on the weight function $\mathbb{I}$ and discuss the other case $\check{\mathbb{I}}$ in \S\ref{sec:Icheck}.
Here is a useful result about the Riemann--Hilbert problem associated with the orthogonal polynomial on the unit circle.
\begin{proposition}[\cite{Baik:1999JAMS}]
    Let $\mathbb{I}(z) = \mathbb{I}(z,\xi)$.
    Under the notation above, the matrix-valued function,
    \begin{align}
        Y(z) = Y_n(z) = 
        \begin{pmatrix}
            \pi_n(z) & \mathsf{C}[w^{-n}\pi_n(w)\mathbb{I}(w)](z) \\
            - \kappa_{n-1}^2 \pi_{n-1}^*(z) & - \kappa_{n-1}^2 \mathsf{C}[w^{-n}\pi_{n-1}^*(w) \mathbb{I}(w)](z)
        \end{pmatrix} \, ,
    \end{align}
    solves the following Riemann--Hilbert problem: 
    \begin{enumerate}
        \item $Y$ is analytic and $\det Y = 1$ on $\mathbb{C}\backslash\mathbb{T}$.
        
        \item The boundary value on $\mathbb{T}$ from the inside $Y_+$ and from the outside $Y_-$ have the relation,
        \begin{align}
            Y_+(z) = Y_-(z) \begin{pmatrix} 1 & z^{-n} \mathbb{I}(z) \\ 0 & 1 \end{pmatrix} \, .
        \end{align}
        
        \item The asymptotic behavior is given by $\displaystyle Y(z) 
        \begin{pmatrix}
            z^{-n} & 0 \\ 0 & z^n
        \end{pmatrix} = I + O(z^{-1})$ as $z \to \infty$. 
    \end{enumerate}
\end{proposition}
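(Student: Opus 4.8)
The plan is to verify the three conditions of the Riemann--Hilbert problem directly, using the classical theory of orthogonal polynomials on the unit circle (OPUC) together with the Plemelj--Sokhotski jump formula for the Cauchy transform. The key point is that the entries of $Y_n$ are built from the monic orthogonal polynomial $\pi_n$, its dual (reversed) polynomial $\pi_{n-1}^*$, and their Cauchy transforms against the weight $\mathbb{I}$, and each condition reduces to an orthogonality identity or a residue computation.

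First I would establish analyticity and the determinant condition. Analyticity on $\mathbb{C}\backslash\mathbb{T}$ is immediate: $\pi_n$ and $\pi_{n-1}^*$ are polynomials, and the Cauchy transform of a continuous function on $\mathbb{T}$ is holomorphic off $\mathbb{T}$. For $\det Y_n = 1$, I would note that $\det Y_n$ is holomorphic on $\mathbb{C}\backslash\mathbb{T}$, has no jump across $\mathbb{T}$ (which follows once condition (2) is checked, since the jump matrix is unimodular), hence extends to an entire function; then by the $z\to\infty$ asymptotics in condition (3), $\det Y_n \to 1$, so Liouville forces $\det Y_n \equiv 1$. Alternatively one can compute $\det Y_n$ directly from the Wronskian-type relation between $\pi_n$, $\pi_{n-1}^*$ and their Cauchy transforms, using $\langle \pi_n, \pi_n\rangle_{\mathbb{I}} = \kappa_n^{-2}$ and the identities of Lemma on the unit-matrix integrals; I would present the Liouville argument as the clean route.

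Next I would verify the jump condition (2). Writing $f(z) = z^{-n}\pi_n(z)\mathbb{I}(z)$ and $g(z) = z^{-n}\pi_{n-1}^*(z)\mathbb{I}(z)$, the Plemelj formula gives $\mathsf{C}[f]_+(z) - \mathsf{C}[f]_-(z) = f(z)$ for $z\in\mathbb{T}$, and similarly for $g$. Since $\pi_n$ and $\pi_{n-1}^*$ are polynomials they have no jump, so comparing the two columns of $Y_+$ and $Y_-$ reproduces exactly the factor $\begin{pmatrix} 1 & z^{-n}\mathbb{I}(z) \\ 0 & 1 \end{pmatrix}$; this is essentially bookkeeping with the definitions.

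The main obstacle, and the step I would spend the most care on, is the asymptotic condition (3) as $z\to\infty$, which is where the orthogonality of the $\pi_n$ is actually used. Expanding $\mathsf{C}[w^{-n}\pi_n(w)\mathbb{I}(w)](z) = -\sum_{k\ge 0} z^{-k-1}\oint_{\mathbb{T}} w^{k-n}\pi_n(w)\mathbb{I}(w)\,\frac{\dd{w}}{2\pi\ii}$, one recognizes the moments $\oint_{\mathbb{T}} w^{k-n}\pi_n(w)\mathbb{I}(w)\,\frac{\dd w}{2\pi\ii} = \langle \pi_n, w^{n-k}\rangle_{\mathbb{I}}$ (using $\overline{w^{n-k}} = w^{k-n}$ on $\mathbb{T}$), which vanishes for $0\le k\le n-1$ by orthogonality of $\pi_n$ to lower-degree polynomials, and equals $\langle \pi_n,\pi_n\rangle_{\mathbb{I}} = \kappa_n^{-2}$ when $k=n$. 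Hence the $(1,2)$ entry behaves like $-\kappa_n^{-2} z^{-n-1} + O(z^{-n-2})$, and since $\pi_n(z) = z^n + O(z^{n-1})$, the top row of $Y_n\operatorname{diag}(z^{-n},z^n)$ is $(1+O(z^{-1}), -\kappa_n^{-2} z^{-1}+\cdots)$, consistent with $I + O(z^{-1})$ provided one also matches the bottom row. For the bottom row the relevant fact is that $\pi_{n-1}^*(z) = \overline{\pi_{n-1}}(z^{-1}) z^{n-1}$ has constant term in $z$ equal to the leading coefficient of $\pi_{n-1}$, namely $1$ (monic), so $-\kappa_{n-1}^2\pi_{n-1}^*(z) = -\kappa_{n-1}^2 z^{n-1} + \cdots$, giving $(2,1)$ entry $-\kappa_{n-1}^2 z^{-1} + \cdots = O(z^{-1})$; and the $(2,2)$ entry requires $-\kappa_{n-1}^2 \oint_{\mathbb{T}} w^{-1}\pi_{n-1}^*(w)\mathbb{I}(w)\,\frac{\dd w}{2\pi\ii} = 1$, which is the standard OPUC normalization identity $\kappa_{n-1}^2\langle \pi_{n-1}^*, w^{n-1}\rangle_{\mathbb{I}} = \kappa_{n-1}^2\langle \pi_{n-1},\pi_{n-1}\rangle_{\mathbb{I}} = 1$ (here using that $\pi_{n-1}^*$ paired against $w^{n-1}$ picks out the inner product of $\pi_{n-1}$ with itself after reversal). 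Assembling these four asymptotics gives $Y_n\operatorname{diag}(z^{-n},z^n) = I + O(z^{-1})$, completing the proof; the delicate part throughout is tracking which moment identity is invoked and the conjugation/reversal conventions for $\pi^*$ on $\mathbb{T}$. Since this is a known result of Baik, Deift and Johansson, I would keep the exposition brief and refer to \cite{Baik:1999JAMS} for the details already carried out there.
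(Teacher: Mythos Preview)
The paper does not give its own proof of this proposition: it is stated with the citation \cite{Baik:1999JAMS} and used as a known input, so there is no ``paper's proof'' to compare against. Your verification is the standard one (Plemelj jump for the Cauchy transform, orthogonality for the asymptotics, Liouville for $\det Y=1$) and is correct in outline.

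Two minor bookkeeping slips that you should clean up when writing it out: (i) with the paper's inner product $\langle f,g\rangle_{\mathbb I}=\oint f\,\overline g\,\mathbb I(w)\,\frac{\dd w}{2\pi\ii w}$, the moment $\oint_{\mathbb T} w^{k-n}\pi_n(w)\mathbb I(w)\,\frac{\dd w}{2\pi\ii}$ equals $\langle\pi_n, w^{\,n-k-1}\rangle_{\mathbb I}$, not $\langle\pi_n,w^{\,n-k}\rangle_{\mathbb I}$ (there is an extra $w^{-1}$ in $\dd\mu$); the vanishing for $k=0,\dots,n-1$ still follows from orthogonality, so the conclusion is unaffected. (ii) In the $(2,2)$ entry the two minus signs (from the definition of $Y$ and from the geometric expansion of $1/(w-z)$) cancel, so the condition reads $\kappa_{n-1}^2\oint_{\mathbb T} w^{-1}\pi_{n-1}^*(w)\mathbb I(w)\,\frac{\dd w}{2\pi\ii}=1$, not with a leading minus. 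Your identification of this integral with $\langle\pi_{n-1},\pi_{n-1}\rangle_{\mathbb I}=\kappa_{n-1}^{-2}$ (via $w\mapsto w^{-1}$ and the symmetry of $\mathbb I$) then gives the correct value.
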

We write the asymptotic expansion of $Y$ as follows,
\begin{align}
    Y_n(z) & = 
    \begin{cases}
        \displaystyle 
        \left( I + \sum_{k \ge 1} z^{-k} Y_{n;k}^{(\infty)} \right) \begin{pmatrix}
        z^{n} & 0 \\ 0 & z^{-n}
        \end{pmatrix} & (z \to \infty)  \\[.5em]
        \displaystyle
        Y_n(0) + \sum_{k \ge 1} z_k Y_{n;k}^{(0)}  & (z \to 0) 
    \end{cases}
\end{align}
We prepare the following Lemma.
\begin{lemma}[\cite{Baik2016}]
We have the inversion relation,
\begin{align}
    Y_n(z) = \sigma_3 Y_n(0)^{-1} Y_n(z^{-1}) z^{n \sigma_3} \sigma_3 \, ,
    \label{eq:Y_inversion}
\end{align}
with $\sigma_3 = \operatorname{diag}(+1,-1)$.
In particular, we have
\begin{align}
    Y_n(0) = \sigma_3 Y_n(0)^{-1} \sigma_3 \, ,
    \label{eq:Y0_inversion}
\end{align}
and also
\begin{align}
    Y_n(0) = 
    \begin{pmatrix}
        x_n & \kappa_n^{-2} \\ - \kappa_{n-1}^2 & x_n
    \end{pmatrix}
    \, .
\end{align}
\end{lemma}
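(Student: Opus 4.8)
The plan is to establish the three assertions in turn, deriving the inversion relation \eqref{eq:Y_inversion} first, then specializing to $z=0$, and finally reading off the explicit form of $Y_n(0)$. For the inversion relation, I would define $\widetilde{Y}_n(z) = \sigma_3 Y_n(0)^{-1} Y_n(z^{-1}) z^{n\sigma_3} \sigma_3$ and verify that it solves the \emph{same} Riemann--Hilbert problem as $Y_n$; uniqueness of the solution (which follows from the normalization $\det Y = 1$ together with the asymptotics, by the standard Liouville argument) then forces $\widetilde{Y}_n = Y_n$. The three checks are: (i) analyticity and $\det\widetilde{Y}_n = 1$ on $\mathbb{C}\setminus\mathbb{T}$, which is immediate since $z \mapsto z^{-1}$ preserves $\mathbb{C}\setminus\mathbb{T}$ and $\det z^{n\sigma_3} = 1$; (ii) the jump relation, where one uses that $z \mapsto z^{-1}$ swaps the inside and outside of $\mathbb{T}$, so $\widetilde{Y}_{n,+}$ is built from $Y_{n,-}(z^{-1})$ and vice versa, and one checks that the conjugated jump matrix $\sigma_3 (z^{n\sigma_3})^{-1}\left(\begin{smallmatrix} 1 & z^{n}\mathbb{I}(z^{-1}) \\ 0 & 1 \end{smallmatrix}\right) z^{n\sigma_3}\sigma_3$ reproduces $\left(\begin{smallmatrix} 1 & z^{-n}\mathbb{I}(z) \\ 0 & 1 \end{smallmatrix}\right)$ after using the symmetry $\mathbb{I}(z)=\mathbb{I}(z^{-1})$ noted in Remark~\ref{rmk:inversion_symmetry_I}; (iii) the asymptotics as $z\to\infty$, which corresponds to $z^{-1}\to 0$, so that $Y_n(z^{-1}) \to Y_n(0)$ and the prefactor $\sigma_3 Y_n(0)^{-1}$ cancels the leading term, leaving $I + O(z^{-1})$ after the twist by $z^{n\sigma_3}$.

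Evaluating \eqref{eq:Y_inversion} at $z=0$ (more precisely, taking $z\to 0$ and noting $z^{n\sigma_3}\to \operatorname{diag}(0,\ldots)$ needs care — the correct statement is that the $z^{n\sigma_3}$ factor combined with the $z\to 0$ limit of $Y_n(z^{-1})$, which grows like $z^{-n\sigma_3}$ by the asymptotic expansion at infinity, produces a finite limit) gives $Y_n(0) = \sigma_3 Y_n(0)^{-1}\sigma_3$, which is \eqref{eq:Y0_inversion}. The cleanest route is to write $Y_n(z) = \left(I + \sum_{k\ge 1} z^{-k} Y^{(\infty)}_{n;k}\right) z^{n\sigma_3}$ near infinity, substitute into the right-hand side of \eqref{eq:Y_inversion}, and match the $z\to\infty$ expansions on both sides; the constant term of the matching yields $\sigma_3 Y_n(0)^{-1}\sigma_3 = Y_n(0)$.

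For the explicit form of $Y_n(0)$: the $(1,1)$-entry is $\pi_n(0) = x_n$ by definition. The $(2,1)$-entry is $-\kappa_{n-1}^2 \pi_{n-1}^*(0) = -\kappa_{n-1}^2 \cdot \overline{\pi_{n-1}}(\infty)\cdot 0^{n-1}$ — here I use that $\pi_{n-1}^*(z) = \overline{\pi_{n-1}}(z^{-1}) z^{n-1}$ is a polynomial of degree $n-1$ whose constant term is the leading coefficient of $\overline{\pi_{n-1}}$, which is $1$ since $\pi_{n-1}$ is monic with real coefficients; hence $\pi_{n-1}^*(0) = 1$ and the entry is $-\kappa_{n-1}^2$. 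For the second column one evaluates the Cauchy transforms at $z=0$: $\mathsf{C}[\mathsf{p}](0) = \frac{1}{2\pi\mathrm{i}}\oint_{\mathbb{T}} \mathsf{p}(w)\frac{\dd w}{w}$ picks out the constant Fourier coefficient. For the $(2,2)$-entry, $-\kappa_{n-1}^2 \mathsf{C}[w^{-n}\pi_{n-1}^*(w)\mathbb{I}(w)](0)$ equals $-\kappa_{n-1}^2$ times the $w^{n}$-coefficient of $\pi_{n-1}^*(w)\mathbb{I}(w)$; expanding $\pi_{n-1}^* = \sum c_j w^j$ with $c_{n-1}=1$ and $\mathbb{I}(w) = \sum I_m w^m$, and using orthogonality $\langle \pi_{n-1}, z^k\rangle_{\mathbb{I}} = 0$ for $k < n-1$ together with $\langle \pi_{n-1},\pi_{n-1}\rangle_{\mathbb{I}} = \kappa_{n-1}^{-2}$, this coefficient evaluates to $-x_n$ after using the Szeg\H{o} recurrence relating $\pi_n$, $\pi_{n-1}$ and $\pi_{n-1}^*$; alternatively, one deduces the $(2,2)$-entry directly from $Y_n(0) = \sigma_3 Y_n(0)^{-1}\sigma_3$ once the other three entries and $\det Y_n(0) = 1$ are known. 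Indeed $\det Y_n(0) = x_n^2 - (\kappa_n^{-2})(-\kappa_{n-1}^2) = x_n^2 + \kappa_{n-1}^2/\kappa_n^2$, and comparing with $\det Y = 1$ recovers Lemma~\ref{lemma:Zx_relation}; imposing \eqref{eq:Y0_inversion} then pins the $(2,2)$-entry to equal the $(1,1)$-entry, namely $x_n$.

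I expect the main obstacle to be the careful treatment of the $z\to 0$ versus $z\to\infty$ limits in verifying that $\widetilde{Y}_n$ has the correct asymptotic normalization, since the factor $z^{n\sigma_3}$ is singular and one must track how it interacts with the expansion of $Y_n(z^{-1})$ at its singular point; the jump-matrix computation is routine once the symmetry $\mathbb{I}(z) = \mathbb{I}(z^{-1})$ is invoked, and the entrywise evaluation of $Y_n(0)$ is a direct residue/Fourier-coefficient computation combined with the orthogonality defining $\pi_n$.
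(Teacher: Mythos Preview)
The paper does not supply a proof of this lemma; it is stated with a citation to \cite{Baik2016} and used as a black box. Your proposal is the standard argument---defining $\widetilde{Y}_n(z)=\sigma_3 Y_n(0)^{-1}Y_n(z^{-1})z^{n\sigma_3}\sigma_3$, checking it satisfies the same Riemann--Hilbert problem, and invoking uniqueness---and it is correct; this is essentially how the result is obtained in the cited reference.

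Two minor points worth tightening. First, your written expression ``$\pi_{n-1}^*(0)=\overline{\pi_{n-1}}(\infty)\cdot 0^{n-1}$'' is nonsensical as stated; what you want is simply that if $\pi_{n-1}(z)=z^{n-1}+\cdots+a_0$ then $\pi_{n-1}^*(z)=1+\bar a_{n-2}z+\cdots+\bar a_0 z^{n-1}$, so $\pi_{n-1}^*(0)=1$. Second, for the $(1,2)$-entry you can compute $\mathsf{C}[w^{-n}\pi_n\mathbb{I}](0)=\langle\pi_n,z^n\rangle_{\mathbb{I}}=\kappa_n^{-2}$ directly (since $z^n-\pi_n$ has degree $<n$), which together with $(1,1)=x_n$, $(2,1)=-\kappa_{n-1}^2$, and the relation $Y_n(0)=\sigma_3 Y_n(0)^{-1}\sigma_3$ forces the $(2,2)$-entry to equal the $(1,1)$-entry. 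Your attempt to get the $(2,2)$-entry by a direct Fourier-coefficient calculation via the Szeg\H{o} recursion would also work but is unnecessary.
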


\begin{proposition}
    The matrix-valued function,
    \begin{align}
        \Psi(z) = \Psi_n(z) = 
        \begin{pmatrix}
            1 & 0 \\ 0 & \kappa_n^{-2}
        \end{pmatrix} Y_n(z) 
        \begin{pmatrix}
            \mathbb{I}(z) & 0 \\ 0 & z^n
        \end{pmatrix} \, , 
        \label{eq:Psi_def}
    \end{align}
    solves the following Riemann--Hilbert problem:
    \begin{enumerate}
        \item $\Psi$ is analytic on  $\mathbb{C}\backslash\mathbb{D}$ with $\mathbb{D} = \mathbb{T} \cup \xi q^{\mathbb{Z}_{>0}'} \cup \xi^{-1} q^{\mathbb{Z}_{<0}'}$.

        \item The boundary values on $\mathbb{T}$ denoted by $\Psi_\pm$ have the relation, $\displaystyle \Psi_+ = \Psi_- 
        \begin{pmatrix}
            1 & 1 \\ 0 & 1 
        \end{pmatrix}$.

        \item The asymptotic behavior is given by
        \begin{subequations}
            \begin{align}
                \Psi(z) \xrightarrow{z \to \infty} & \begin{pmatrix}
            1 & 0 \\ 0 & \kappa_n^{-2}
        \end{pmatrix} \left( I + \sum_{k \ge 1} z^{-k} Y_{n;k}^{(\infty)} \right) 
        \begin{pmatrix}
            z^n \mathbb{I}(z) & 0 \\ 0 & 1
        \end{pmatrix} \, , \\[.5em]
        \Psi(z) \xrightarrow{z \to 0} & \begin{pmatrix}
            1 & 0 \\ 0 & \kappa_n^{-2}
        \end{pmatrix} \left( Y_n(0) + \sum_{k\ge 1} z^{k} Y_{n;k}^{(0)} \right) 
        \begin{pmatrix}
            \mathbb{I}(z) & 0 \\ 0 & z^n
        \end{pmatrix} \, .
            \end{align}
        \end{subequations}
    \end{enumerate}
\end{proposition}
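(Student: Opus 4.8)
The plan is to read off each of the three defining properties of $\Psi_n$ from the corresponding property of $Y_n$ furnished by the preceding Proposition, regarding $\mathbb{I}(z)$ everywhere as the meromorphic function $(\xi q^{\frac{1}{2}}z,\xi q^{\frac{1}{2}}z^{-1};q)_\infty^{-1}$ on $\mathbb{C}^\times$ rather than merely as a function on $\mathbb{T}$. The preliminary step is to pin down its singularities: the factor $(\xi q^{\frac{1}{2}}z;q)_\infty=\prod_{k\ge 0}(1-\xi q^{k+\frac{1}{2}}z)$ vanishes exactly on $\xi^{-1}q^{\mathbb{Z}_{<0}'}$, all of modulus $>1$, and $(\xi q^{\frac{1}{2}}z^{-1};q)_\infty=\prod_{k\ge 0}(1-\xi q^{k+\frac{1}{2}}z^{-1})$ vanishes exactly on $\xi q^{\mathbb{Z}_{>0}'}$, all of modulus $<1$; hence $\mathbb{I}$ is holomorphic and nonzero on the open annulus $\xi q^{\frac{1}{2}}<|z|<\xi^{-1}q^{-\frac{1}{2}}$, which contains $\mathbb{T}$, and its only singularities on $\mathbb{C}^\times$ are poles lying on $\xi q^{\mathbb{Z}_{>0}'}\cup\xi^{-1}q^{\mathbb{Z}_{<0}'}$.

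For property (1), write $\Psi_n(z)=\operatorname{diag}(1,\kappa_n^{-2})\,Y_n(z)\,\operatorname{diag}(\mathbb{I}(z),z^n)$. Right multiplication by $\operatorname{diag}(\mathbb{I}(z),z^n)$ multiplies the first column of $Y_n$ — whose entries $\pi_n(z)$ and $\pi_{n-1}^*(z)$ are entire — by $\mathbb{I}(z)$, and the second column, formed by the Cauchy transforms, by the entire function $z^n$. Since $Y_n$ is analytic on $\mathbb{C}\setminus\mathbb{T}$, the product $\Psi_n$ is therefore analytic on $\mathbb{C}\setminus\mathbb{D}$ with $\mathbb{D}=\mathbb{T}\cup\xi q^{\mathbb{Z}_{>0}'}\cup\xi^{-1}q^{\mathbb{Z}_{<0}'}$, as claimed. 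For property (2), note that $\mathbb{I}$ and $z^n$ are analytic across $\mathbb{T}$, so $\operatorname{diag}(\mathbb{I},z^n)$ has equal boundary values there; combining the jump $Y_+=Y_-\left(\begin{smallmatrix}1 & z^{-n}\mathbb{I}(z)\\0&1\end{smallmatrix}\right)$ with the elementary identity
\begin{align}
    \begin{pmatrix}1 & z^{-n}\mathbb{I}(z)\\0 & 1\end{pmatrix}\begin{pmatrix}\mathbb{I}(z) & 0\\0 & z^n\end{pmatrix}
    & = \begin{pmatrix}\mathbb{I}(z) & \mathbb{I}(z)\\0 & z^n\end{pmatrix} \nonumber \\
    & = \begin{pmatrix}\mathbb{I}(z) & 0\\0 & z^n\end{pmatrix}\begin{pmatrix}1 & 1\\0 & 1\end{pmatrix} \, ,
\end{align}
one gets $\Psi_+=\Psi_-\left(\begin{smallmatrix}1&1\\0&1\end{smallmatrix}\right)$ on $\mathbb{T}$; the insertion of $\operatorname{diag}(\mathbb{I},z^n)$ is precisely what conjugates the $z$-dependent jump of $Y_n$ into this constant one. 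For property (3), substitute the two expansions of $Y_n$: at $z\to\infty$ use $\operatorname{diag}(z^n,z^{-n})\operatorname{diag}(\mathbb{I}(z),z^n)=\operatorname{diag}(z^n\mathbb{I}(z),1)$ to collapse the product, and at $z\to 0$ simply conjugate the convergent Taylor series $Y_n(0)+\sum_{k\ge1}z^k Y_{n;k}^{(0)}$ by the two diagonal matrices; in either case the stated formula appears verbatim.

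I expect the only delicate point to be conceptual rather than computational: the ``$z\to 0$'' line is not a genuine asymptotic statement, since the poles $\xi q^{\mathbb{Z}_{>0}'}$ of $\mathbb{I}$ accumulate at the origin, making $0$ a non-isolated singularity of $\Psi_n$; what is being asserted is only that near $0$ the function $\Psi_n$ factors through the honest Taylor expansion of $Y_n$, which is analytic throughout the open unit disk (the Cauchy-transform entries being analytic there). Once this is understood, the remainder is bookkeeping of the zeros of the $q$-Pochhammer symbols together with a pair of $2\times 2$ matrix multiplications, so I do not anticipate any real obstacle.
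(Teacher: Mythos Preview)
Your proposal is correct and follows essentially the same approach as the paper: both derive the analyticity of $\Psi$ from that of $Y$ together with the pole structure of $\mathbb{I}$, and both obtain the constant jump by conjugating the jump matrix of $Y$ by $\operatorname{diag}(\mathbb{I},z^n)$. You supply more detail than the paper on items~(1) and~(3) (the paper leaves the asymptotics unproved), and your remark on the accumulation of poles at the origin is a useful caveat the paper does not make explicit.
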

\begin{proof}
    The analyticity of $\Psi$ directly follows from that of $Y$.
    In this case, we also exclude the set of the poles of the weight function $\mathbb{I}$ in addition to the unit circle.
    We obtain the jump matrix as follows,
    \begin{align}
        \Psi_+ & = 
        \begin{pmatrix}
            1 & 0 \\ 0 & \kappa_n^{-2}
        \end{pmatrix} Y_+
        \begin{pmatrix}
            \mathbb{I} & 0 \\ 0 & z^n
        \end{pmatrix} = 
        \begin{pmatrix}
            1 & 0 \\ 0 & \kappa_n^{-2}
        \end{pmatrix} Y_- 
        \begin{pmatrix}
            1 & z^{-n} \mathbb{I} \\ 0 & 1
        \end{pmatrix}
        \begin{pmatrix}
            \mathbb{I} & 0 \\ 0 & z^n
        \end{pmatrix}
        \nonumber \\
        & = 
        \Psi_-
        \begin{pmatrix}
            \mathbb{I}^{-1} & 0 \\ 0 & z^{-n}
        \end{pmatrix}
        \begin{pmatrix}
            1 & z^{-n} \mathbb{I} \\ 0 & 1
        \end{pmatrix}
        \begin{pmatrix}
            \mathbb{I} & 0 \\ 0 & z^n
        \end{pmatrix}
        = \Psi_-
        \begin{pmatrix}
            1 & 1 \\ 0 & 1
        \end{pmatrix} \, .
    \end{align}
\end{proof}

\begin{proposition}
    We have the following Lax pair,
    \begin{align}
    \Psi_{n+1}(z) = U_n(z) \Psi_n(z) \, , \qquad 
    \Psi_n(qz) = T_n(z) \Psi_n(z) \, .
    \end{align}
    where
    \begin{align}
        U_n(z) = U_{n;1} z + U_{n;0} \, , \qquad
        T_n(z) = \frac{T_{n;2} z^2 + T_{n;1} z + T_{n;0}}{1 - z \xi^{-1} q^{\frac{1}{2}}} \, .
    \end{align}        
    In particular, we have
    \begin{subequations}\label{eq:UT_coefficients}
        \begin{gather}
            U_{n;1} = 
            \begin{pmatrix}
                1 & 0 \\ 0 & 0
            \end{pmatrix} \, , \qquad 
            U_{n;0} =
            \begin{pmatrix}
            x_{n} x_{n+1} & - x_{n+1} \\ - (1 - x_{n+1}^2) x_n & 1 - x_{n+1}^2
            \end{pmatrix} \, , \label{eq:U_coefficients} \\
            T_{n;2} = 
        \begin{pmatrix}
            q^{n+1} & 0 \\ 0 & 0
        \end{pmatrix} \, , \qquad 
        T_{n;0} = q^n
        \begin{pmatrix}
            1 - x_n^2 & x_n \\ (1-x_n^2) x_n & x_n^2
        \end{pmatrix} \, . \label{eq:T_coefficients}
        \end{gather}
    \end{subequations}
\end{proposition}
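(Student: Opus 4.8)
The plan is to define the candidate Lax matrices as the ratios $U_n(z):=\Psi_{n+1}(z)\Psi_n(z)^{-1}$ and $T_n(z):=\Psi_n(qz)\Psi_n(z)^{-1}$ and to prove that each extends to a rational function of the asserted shape, exploiting three features of the Riemann--Hilbert problem for $\Psi_n$: the jump $\begin{pmatrix}1&1\\0&1\end{pmatrix}$ across $\mathbb{T}$ is independent of $n$ and of $z$; in the factorized form $\Psi_n=\operatorname{diag}(1,\kappa_n^{-2})\,Y_n(z)\,\operatorname{diag}(\mathbb{I}(z),z^n)$ the weight enters only through $Y_n$ (analytic, $\det Y_n\equiv1$ on $\mathbb{C}\setminus\mathbb{T}$) and the explicit diagonal factor; and the asymptotic normalizations at $z=\infty$ and $z=0$ are prescribed. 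The coefficients will then be read off from the leading term as $z\to\infty$ and from the value at $z=0$, using $Y_n(0)=\begin{pmatrix}x_n&\kappa_n^{-2}\\-\kappa_{n-1}^2&x_n\end{pmatrix}$ and the relation $\kappa_{n-1}^2/\kappa_n^2=1-x_n^2$ from Lemma~\ref{lemma:Zx_relation}.

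For $U_n$: since $\Psi_{n+1}$ and $\Psi_n$ have the same $\mathbb{T}$-jump it cancels in $U_n$, which in the $Y$-form equals $\operatorname{diag}(1,\kappa_{n+1}^{-2})\,Y_{n+1}(z)\operatorname{diag}(1,z)\,Y_n(z)^{-1}\operatorname{diag}(1,\kappa_n^2)$; here the weight has disappeared, so $U_n$ is entire, and the $z\to\infty$ asymptotics force it to be linear with $z^1$-coefficient $\operatorname{diag}(1,0)$. Evaluating the $Y$-form at $z=0$ then gives $U_{n;0}$ as stated. For $T_n$ the additional ingredient is the first-order $q$-difference equation for the weight, which from the product formula for $\mathbb{I}$ reads $\mathbb{I}(qz)=\frac{1-\xi q^{\frac{1}{2}}z}{1-\xi q^{-\frac{1}{2}}z^{-1}}\,\mathbb{I}(z)=\frac{-\xi^{-1}q^{\frac{1}{2}}z\,(1-\xi q^{\frac{1}{2}}z)}{1-\xi^{-1}q^{\frac{1}{2}}z}\,\mathbb{I}(z)$. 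In the $Y$-form $T_n=\operatorname{diag}(1,\kappa_n^{-2})\,Y_n(qz)\operatorname{diag}\!\big(\mathbb{I}(qz)/\mathbb{I}(z),\,q^n\big)Y_n(z)^{-1}\operatorname{diag}(1,\kappa_n^2)$, so the essential singularity of $\mathbb{I}$ at $0$ cancels and only the explicit rational factor, with its single simple pole at $z=\xi^{-1}q^{\frac{1}{2}}$, survives; hence $(1-\xi^{-1}q^{\frac{1}{2}}z)\,T_n(z)$ is entire. Since $\mathbb{I}(qz)/\mathbb{I}(z)\sim-\xi q^{\frac{1}{2}}z$ as $z\to\infty$, this entire function grows at most quadratically, hence is a quadratic polynomial, giving the stated form of $T_n$; its leading coefficient is $T_{n;2}=\operatorname{diag}(q^{n+1},0)$, and evaluating at $z=0$ (where $\mathbb{I}(qz)/\mathbb{I}(z)\to0$) yields $T_{n;0}=\operatorname{diag}(1,\kappa_n^{-2})Y_n(0)\operatorname{diag}(0,q^n)Y_n(0)^{-1}\operatorname{diag}(1,\kappa_n^2)=q^n\begin{pmatrix}1-x_n^2&x_n\\(1-x_n^2)x_n&x_n^2\end{pmatrix}$.

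The step that looks trivial but is in fact the crux is the cancellation of the jump in $T_n$: since $\Psi_n(qz)$ for $z$ near $\mathbb{T}$ is evaluated strictly inside the unit disk, it does not literally inherit the jump of $\Psi_n$, so the cancellation of $\begin{pmatrix}1&1\\0&1\end{pmatrix}$ must be justified by analytically continuing $\Psi_n$ across $\mathbb{T}$ (legitimate because $\mathbb{I}$ is analytic in an annulus about $\mathbb{T}$) and checking that the resulting $q$-connection $T_n$ has trivial monodromy around both $\mathbb{T}$ and $q^{-1}\mathbb{T}$; I expect this to be the main technical point. A clean way to sidestep the continuation bookkeeping is to run the argument backwards: take $U_n$ and $T_n$ to be given by the stated formulas and verify that $U_n\Psi_n$ and $T_n\Psi_n$ solve the same Riemann--Hilbert problems as $\Psi_{n+1}$ and $\Psi_n(qz)$ respectively --- the jump conditions match because the prefactors are analytic across $\mathbb{T}$, and the asymptotic normalizations match precisely because the coefficients have been chosen as above --- so that equality follows from uniqueness of the solution to the Riemann--Hilbert problem. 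What remains, namely tracking the subleading terms in the expansions at $z\to\infty$ and $z\to0$ and extracting $T_{n;1}$, is routine.
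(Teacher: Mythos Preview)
Your approach is essentially the same as the paper's: define $U_n=\Psi_{n+1}\Psi_n^{-1}$ and $T_n=\Psi_n(q\cdot)\Psi_n^{-1}$, pass to the $Y$-form to see that the weight factor becomes $\operatorname{diag}(1,z)$ for $U_n$ and $\operatorname{diag}(\mathbb{I}(qz)/\mathbb{I}(z),q^n)$ for $T_n$, use Liouville-type reasoning to conclude the stated polynomial/rational structure, and read off the coefficients from the leading behavior at $z\to\infty$ and the value at $z=0$. Your extra care about the jump cancellation for $T_n$ (analytic continuation across $\mathbb{T}$ so that the constant jump $\begin{pmatrix}1&1\\0&1\end{pmatrix}$ cancels at both $z$ and $qz$) is exactly what underlies the paper's terse claim $\Psi_+(qz)\Psi_+(z)^{-1}=\Psi_-(qz)\Psi_-(z)^{-1}$, and your proposed ``backwards'' verification via RHP uniqueness is a legitimate alternative but not needed.
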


\begin{proof}
    The derivation of the matrix $U_n$ is well established~\cite{Baik2003} (see also \cite{Chouteau:2023,Chouteau:2023PhD}).
    In the domain $\mathbb{C}\backslash\{\mathbb{D}\cup 0\}$, the matrix $\Psi_n$ is invertible.
    Hence, we have
    \begin{align}
        U_n(z) = \Psi_{n+1}(z) \Psi_n(z)^{-1} = 
        \begin{pmatrix}
            1 & 0 \\ 0 & \kappa_{n+1}^{-2}
        \end{pmatrix} Y_{n+1}(z) 
        \begin{pmatrix}
            1 & 0 \\ 0 & z
        \end{pmatrix} Y_n(z)^{-1}
        \begin{pmatrix}
            1 & 0 \\ 0 & \kappa_n^{2}
        \end{pmatrix} \, ,
    \end{align}
    which is analytic on $\mathbb{C}$ since $\Psi_{n+1,+}\Psi_{n,+}^{-1} = \Psi_{n+1,-}\Psi_{n,-}^{-1}$.
    Therefore, $U_n$ is a matrix-valued polynomial in $z$, whose coefficients can be determined by the asymptotic behavior.
    From the behavior of $Y_n$ and $Y_{n+1}$ as $z \to \infty$, we have
    \begin{align}
        U_n(z) = 
        \begin{pmatrix}
            z & 0 \\ 0 & 0    
        \end{pmatrix}(1 + O(z^{-1})) \, , \quad z \to \infty \, ,
    \end{align}
    from which we obtain $U_{n;1}$.
    On the other hand, near $z = 0$, we have
    \begin{align}
        U_n(z) & = 
        \begin{pmatrix}
            1 & 0 \\ 0 & \kappa_{n+1}^{-2}
        \end{pmatrix} Y_{n+1}(0) 
        \begin{pmatrix}
            1 & 0 \\ 0 & 0
        \end{pmatrix} Y_n(0)^{-1}
        \begin{pmatrix}
            1 & 0 \\ 0 & \kappa_n^{2}
        \end{pmatrix} (1 + O(z))
        \nonumber \\
        & = 
        \begin{pmatrix}
            x_{n} x_{n+1} & - x_{n+1} \\ - (1 - x_{n+1}^2) x_n & 1 - x_{n+1}^2
        \end{pmatrix} (1 + O(z)) \, , \quad z \to 0 \, ,
    \end{align}
    which determines $U_{n;0}$.

    For the matrix $T_n$, we have $\Psi_+(qz) \Psi_+(z)^{-1} = \Psi_-(qz) \Psi_-(z)^{-1}$, hence it is analytic on $\mathbb{T}$.
    We compute
    \begin{align}
        T_n(z) & = \Psi_n(qz) \Psi_n(z)^{-1} = 
        \begin{pmatrix}
            1 & 0 \\ 0 & \kappa_n^{-2}
        \end{pmatrix} Y_n(qz) 
        \begin{pmatrix}
            \frac{\mathbb{I}(qz)}{\mathbb{I}(z)} & 0 \\ 0 & q^n
        \end{pmatrix} Y_n(z)^{-1}
        \begin{pmatrix}
            1 & 0 \\ 0 & \kappa_n^{2}
        \end{pmatrix}         
    \end{align}
    where we have
    \begin{align}
        \frac{\mathbb{I}(qz)}{\mathbb{I}(z)} = \frac{1 - \xi q^{\frac{1}{2}} z}{1 - \xi q^{-\frac{1}{2}} / z} =
        \begin{cases}
            - \xi q^{\frac{1}{2}} z (1 + O(z^{-1})) & (z \to \infty) \\
            O(z) & (z \to 0)
        \end{cases}
    \end{align}
    Hence, the matrix $T_n$ may have a pole at $z = \xi q^{-\frac{1}{2}}$.
    We compute the asymptotic behavior as $z \to \infty$,
    \begin{align}
        T_n(z) 
        & =
        \begin{pmatrix}
        -\xi q^{n+\frac{1}{2}} z & 0 \\ 0 & 0
        \end{pmatrix} (1 + O(z^{-1})) \, , \quad z \to \infty \, ,
    \end{align}
    and, as $z \to 0$, we have
    \begin{align}
        T_n(z) & = 
        \begin{pmatrix}
            1 & 0 \\ 0 & \kappa_n^{-2}
        \end{pmatrix} Y_n(0) 
        \begin{pmatrix}
            0 & 0 \\ 0 & q^n
        \end{pmatrix} Y_n(0)^{-1}
        \begin{pmatrix}
            1 & 0 \\ 0 & \kappa_n^{2}
        \end{pmatrix} (1 + O(z)) \nonumber\\ 
        & = q^n
        \begin{pmatrix}
        1-x_n^2 & x_n \\ (1-x_n^2)x_n & x_n^2
        \end{pmatrix} (1 + O(z))
        \, , \quad z \to 0 \, ,
    \end{align}
    from which we determine $T_{n;2}$ and $T_{n;0}$.
\end{proof}

\begin{remark}
    Joshi and Latimer studied the Riemann--Hilbert problem associated with $q$-orthogonal polynomials~\cite{Joshi:2021wno}, where the inner product is defined based on the Jackson integral.
    Although our inner product also allows an infinite series form~\eqref{eq:Jackson_form}, it is not written as a Jackson integral, but rather as a $q$-hypergeometric series~\cite{Koekoek2010}.
    Generalizing Joshi--Latimer's result to such a $q$-hypergeometric type orthogonal polynomial would be an interesting future direction.
\end{remark}

The Lax pairs for the $q$-Painlevé equations have been constructed for $q$-P$_\text{VI}$ by Jimbo and Sakai~\cite{Jimbo1996} and for the degenerate cases by Murata~\cite{Murata2009} based on Sakai's geometric classification~\cite{Sakai2001}. 
Let $Y(z,t)$ a matrix-valued function of size 2.
Applying Jimbo--Sakai and Murata's notation, we consider the following shift equations,
\begin{align}
    Y(qz,t) = A(z,t) Y(z,t) \, , \qquad Y(z,qt) = B(z,t) Y(z,t) \, ,
\end{align}
and the compatibility condition $A(z,qt) B(z,t) = B(qz,t) A(z,t)$ gives us the $q$-Painlevé equations.
Comparing with our notation, we may have the correspondence, $(A(z,q^n),B(z,q^n)) \longleftrightarrow (T_n(z),U_n(z))$. 
For example, the Lax pair for $q$-P$(A_5)^\sharp$ considered in~\cite{Murata2009} is given by
\begin{align}
    A(x,t) = A_0(t) + z A_1(t) + z^2 A_2(t) \, , \qquad B(x,t) = \frac{z + B_0(t)}{z - a q t}
\end{align}
where $A_2 = \operatorname{diag}(\kappa,0)$ and $A_0(t)$ has eigenvalues $(\theta t,0)$ with parameters $(a,\kappa,\theta)$.
In our case, we have $T_{n;2} = \operatorname{diag}(q^{n+1},0)$ and $T_{n;0}$ has eigenvalues $(q^n,0)$.
Although there is such a similarity, an essential difference is that $A$ ($U$, resp.) does not have a pole, whereas $T$ ($B$) does, and hence an explicit relation between two formalisms is not clear at this moment.
We will leave this issue for future work.

In order to fix the remaining matrix coefficient $T_{n;1}$, we prepare the following Lemma.

\begin{lemma}\label{lem:T_inversion}
    The following inversion relation holds,
    \begin{align}
        T_n(z)^{-1} = q^{-n} K_n T_n((qz)^{-1}) K_n \, ,
    \end{align}
    where
    \begin{align}
        K_n = K_n^{-1} = 
        \begin{pmatrix}
            1 & 0 \\ 0 & \kappa_n^{-2}
        \end{pmatrix} Y_n(0) \sigma_3 
        \begin{pmatrix}
            1 & 0 \\ 0 & \kappa_n^2
        \end{pmatrix} = 
        \begin{pmatrix}
            x_n & -1 \\ -(1-x_n^2) & -x_n
        \end{pmatrix}
        \, ,
    \end{align}
    with $\det K_n = -1$.
\end{lemma}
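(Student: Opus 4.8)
The plan is to first promote the inversion relation \eqref{eq:Y_inversion} for $Y_n$ to an analogous relation for $\Psi_n$ itself, and then extract the relation for $T_n$ by a short formal manipulation of $T_n(z)^{-1} = \Psi_n(z)\,\Psi_n(qz)^{-1}$. Write $\Psi_n(z) = D\,Y_n(z)\,E(z)$ with $D = \operatorname{diag}(1,\kappa_n^{-2})$ and $E(z) = \operatorname{diag}(\mathbb{I}(z),z^n)$, as in \eqref{eq:Psi_def}. Substituting \eqref{eq:Y_inversion} and then rewriting $Y_n(z^{-1})$ via \eqref{eq:Psi_def} as $D^{-1}\Psi_n(z^{-1})\,E(z^{-1})^{-1}$, every occurrence of $Y$ disappears except the constant matrix $Y_n(0)$, and one is left with purely diagonal bookkeeping. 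Using the symmetry $\mathbb{I}(z^{-1}) = \mathbb{I}(z)$ one checks $E(z^{-1})^{-1}\,z^{n\sigma_3}\,\sigma_3\,E(z) = z^n\sigma_3$, so that $\mathbb{I}$ cancels out entirely, while \eqref{eq:Y0_inversion} in the form $\sigma_3 Y_n(0)^{-1} = Y_n(0)\sigma_3$ collapses the remaining prefactor; the result is the $\Psi_n$ inversion relation
\begin{align}
    \Psi_n(z) = K_n\,\Psi_n(z^{-1})\,z^n\sigma_3 \, , \qquad K_n = D\,Y_n(0)\,\sigma_3\,D^{-1} \, .
\end{align}

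Next I would make $K_n$ explicit: inserting the known form of $Y_n(0)$ and conjugating by $D$ gives
\begin{align}
    K_n = \begin{pmatrix} x_n & -1 \\ -\kappa_{n-1}^2/\kappa_n^2 & -x_n \end{pmatrix} \, ,
\end{align}
and Lemma~\ref{lemma:Zx_relation} ($\kappa_{n-1}^2/\kappa_n^2 = 1 - x_n^2$) turns this into the matrix in the statement, whence $\det K_n = -x_n^2 - (1-x_n^2) = -1$ and $K_n^2 = I$ (equivalently $K_n = K_n^{-1}$) follow by direct computation.

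Finally, to reach the relation for $T_n$, observe that $\det\Psi_n(z) = \kappa_n^{-2}\,\mathbb{I}(z)\,z^n$, so $\Psi_n$ is invertible on $\mathbb{C}\setminus(\mathbb{D}\cup\{0\})$ and $T_n(z) = \Psi_n(qz)\,\Psi_n(z)^{-1}$ together with its inverse are genuine meromorphic matrix functions there. Applying the $\Psi_n$ inversion relation at $z$ and at $qz$ and simplifying with $\sigma_3^2 = I$ and $K_n^{-1} = K_n$,
\begin{align}
    T_n(z)^{-1} = \Psi_n(z)\,\Psi_n(qz)^{-1} = q^{-n}\,K_n\,\Psi_n(z^{-1})\,\Psi_n\big((qz)^{-1}\big)^{-1}\,K_n \, ;
\end{align}
since $q\cdot(qz)^{-1} = z^{-1}$, the inner factor $\Psi_n(z^{-1})\,\Psi_n\big((qz)^{-1}\big)^{-1}$ is precisely $T_n\big((qz)^{-1}\big)$, which yields the asserted identity.

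The step I expect to be the main obstacle is the first one: carefully tracking how the scalar-and-diagonal factors $D$, $E(z)$, $z^{n\sigma_3}$ and $\sigma_3$ interlace, and using the symmetry $\mathbb{I}(z) = \mathbb{I}(z^{-1})$ at exactly the right place so that $\mathbb{I}$ drops out and the prefactor collapses to the $z$-independent matrix $K_n$. Everything after that is a one-line computation.
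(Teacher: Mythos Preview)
Your proof is correct and follows essentially the same route as the paper: both arguments promote the inversion relation~\eqref{eq:Y_inversion} for $Y_n$ to the $\Psi_n$ inversion $\Psi_n(z)=K_n\,\Psi_n(z^{-1})\,z^n\sigma_3$ (equivalently $\Psi_n(z^{-1})=z^{-n}K_n\,\Psi_n(z)\,\sigma_3$) via the symmetry $\mathbb{I}(z)=\mathbb{I}(z^{-1})$, and then read off the $T_n$ relation from the definition $T_n=\Psi_n(q\,\cdot)\Psi_n^{-1}$. Your final step computes $T_n(z)^{-1}$ directly while the paper instead evaluates $T_n(z^{-1})$ via $\Psi_n(qz^{-1})$, but this is only a minor reshuffling of the same computation.
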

\begin{proof}
    From the identity~\eqref{eq:Y_inversion}, we have $Y_n(z^{-1}) = Y_n(0) \sigma_3 Y_n(z) \sigma_3 z^{-n \sigma_3}$. Then, by the definition~\eqref{eq:Psi_def}, we have 
    \begin{align}
        \Psi_n(z^{-1}) = 
        \begin{pmatrix}
            1 & 0 \\ 0 & \kappa_n^{-2}
        \end{pmatrix} Y_n(z^{-1})
        \begin{pmatrix}
            \mathbb{I}(z^{-1}) & 0 \\ 0 & z^{-n}
        \end{pmatrix} 
        = 
        \begin{pmatrix}
            1 & 0 \\ 0 & \kappa_n^{-2}
        \end{pmatrix} 
        Y_n(0) \sigma_3 Y_n(z) \sigma_3 z^{-n \sigma_3}
        \begin{pmatrix}
            \mathbb{I}(z^{-1}) & 0 \\ 0 & z^{-n}
        \end{pmatrix} \, .
    \end{align}
    By the symmetry $\mathbb{I}(z) = \mathbb{I}(z^{-1})$, we have $\sigma_3 z^{-n \sigma_3} \operatorname{diag}(\mathbb{I}(z^{-1}),z^{-n}) = z^{-n} \operatorname{diag}(\mathbb{I}(z),z^{n}) \sigma_3$, and hence we obtain
    \begin{align}
        \Psi_n(z^{-1}) = z^{-n}
        \begin{pmatrix}
            1 & 0 \\ 0 & \kappa_n^{-2}
        \end{pmatrix} Y_n(0) \sigma_3
        \begin{pmatrix}
            1 & 0 \\ 0 & \kappa_n^{2}
        \end{pmatrix} \Psi_n(z) \sigma_3 = z^{-n} K_n \Psi_n(z) \sigma_3 \, .
    \end{align}
    Recalling $\Psi_n(q^{-1}z) = T_n(q^{-1}z)^{-1} \Psi_n(z)$, we compute
    \begin{align}
        \Psi_n(q z^{-1}) 
        & = q^n z^{-n}
        \begin{pmatrix}
            1 & 0 \\ 0 & \kappa_n^{-2}
        \end{pmatrix} Y_n(0) \sigma_3 
        \begin{pmatrix}
            1 & 0 \\ 0 & \kappa_n^{2}
        \end{pmatrix} \Psi_n(q^{-1}z) \sigma_3 \nonumber \\
        & = q^n z^{-n}
        \begin{pmatrix}
            1 & 0 \\ 0 & \kappa_n^{-2}
        \end{pmatrix} Y_n(0) \sigma_3 
        \begin{pmatrix}
            1 & 0 \\ 0 & \kappa_n^{2}
        \end{pmatrix} T_n(q^{-1}z)^{-1} \Psi_n(z) \sigma_3 \, ,
    \end{align}
    from which we obtain $T_n(z^{-1}) = \Psi_n(q z^{-1}) \Psi_n(z^{-1})^{-1} = q^n K_n T_n(q^{-1}z)^{-1} K_n$ and conclude the proof.
\end{proof}

\begin{theorem}\label{thm:qPV1}
    Set $\mathsf{x}_n = \xi^{\frac{1}{2}} q^{\frac{n}{2}} x_n$.
    The compatibility condition
    \begin{align}
    U_n(qz) T_n(z) = T_{n+1}(z) U_n(z) \label{eq:comp_eq1} \, ,
    \end{align}
    is equivalent to $q$-P$_\text{V}$,
    \begin{align}
    \left( \mathsf{x}_n \mathsf{x}_{n+1} - 1 \right) \left( \mathsf{x}_{n-1} \mathsf{x}_{n} - 1 \right) = \frac{(\mathsf{x}_n - a)(\mathsf{x}_n - a^{-1})(\mathsf{x}_n - b)(\mathsf{x}_n - b^{-1})}{(1 - c q_n  \mathsf{x}_n)(1 - c^{-1} q_n \mathsf{x}_n)} \, , \label{eq:qPV}
    \end{align}
    under the parameter identification,
    \begin{align}
    a = \xi^{\frac{1}{2}} \, , \quad b = - \xi^{\frac{1}{2}}, \quad c = \ii \, , \quad q_n = q_0 \lambda^n \, , \quad q_0 = (-\xi)^{-\frac{1}{2}} \, \quad \lambda = q^{-\frac{1}{2}} \, .
    \end{align}
    Namely, we have
    \begin{align}
        \left( \mathsf{x}_n \mathsf{x}_{n+1} - 1 \right) \left( \mathsf{x}_{n-1} \mathsf{x}_{n} - 1 \right) & = \frac{(\mathsf{x}_n^2 - \xi)(\mathsf{x}_n^2 - \xi^{-1})}{(1 - \xi^{-1} q^{-n} \mathsf{x}_n^2)} \, , \label{eq:x_recurrence}
    \end{align}
    with the initial condition $\mathsf{x}_0 = \xi^{\frac{1}{2}}$.
\end{theorem}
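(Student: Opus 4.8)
The strategy is to extract all equations hidden in the $2\times 2$ matrix compatibility condition \eqref{eq:comp_eq1} and then eliminate everything except $x_n$, obtaining a scalar recurrence in $\mathsf{x}_n$. First I would substitute the known forms $U_n(z) = U_{n;1}z + U_{n;0}$ and $T_n(z) = (T_{n;2}z^2 + T_{n;1}z + T_{n;0})/(1 - \xi^{-1}q^{\frac 12} z)$ into \eqref{eq:comp_eq1}, clear the denominator (note the prefactor $1-\xi^{-1}q^{\frac12}z$ shifts consistently on both sides since it does not depend on $n$), and compare coefficients of $z^0, z^1, z^2, z^3$. This yields a finite system of matrix identities; the extreme powers $z^0$ and $z^3$ involve only the already-known coefficients $U_{n;1}, U_{n;0}, T_{n;2}, T_{n;0}$ from \eqref{eq:UT_coefficients} and should be automatically satisfied (serving as a consistency check), while the middle powers $z^1, z^2$ determine $T_{n;1}$ and simultaneously impose the nontrivial relation among $\{x_{n-1}, x_n, x_{n+1}, \kappa_n, \kappa_{n+1}\}$.

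Before that, I would pin down $T_{n;1}$ using Lemma~\ref{lem:T_inversion}: writing out $T_n(z)^{-1} = q^{-n}K_n T_n((qz)^{-1})K_n$ as an identity of rational matrix functions and matching the numerator polynomials gives linear equations for the entries of $T_{n;1}$ in terms of $x_n$ and $\kappa_n$. Combined with $\det T_n(z)$ having the expected factorized form (which follows from $\det\Psi_n(qz)/\det\Psi_n(z)$ and the known behavior of $\mathbb I$), this should determine $T_{n;1}$ completely. I expect $T_{n;1}$ to be an affine combination of $\sigma_3$-type and off-diagonal pieces with coefficients built from $x_n$ and $q^n$.

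Next I would feed the now-explicit $T_n$ into the middle-degree components of the compatibility condition. Using Lemma~\ref{lemma:Zx_relation} to replace ratios $\kappa_{n-1}^2/\kappa_n^2$ by $1 - x_n^2$ (and $\kappa_{n+1}^2/\kappa_n^2$ by $1/(1-x_{n+1}^2)$), every $\kappa$ disappears and one is left with a relation purely in $x_{n-1}, x_n, x_{n+1}$. Rescaling via $\mathsf{x}_n = \xi^{\frac12}q^{\frac n2} x_n$ should turn this into the symmetric $q$-P$_\text{V}$ form \eqref{eq:qPV}; substituting the parameter values $a = \xi^{1/2}$, $b = -\xi^{1/2}$, $c = \ii$, $q_n = (-\xi)^{-1/2}q^{-n/2}$ and simplifying the numerator $(\mathsf x_n - a)(\mathsf x_n - a^{-1})(\mathsf x_n - b)(\mathsf x_n - b^{-1}) = (\mathsf x_n^2 - \xi)(\mathsf x_n^2 - \xi^{-1})$ and the denominator $(1 - cq_n\mathsf x_n)(1 - c^{-1}q_n\mathsf x_n) = 1 - \xi^{-1}q^{-n}\mathsf x_n^2$ yields \eqref{eq:x_recurrence}. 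Finally the initial condition $\mathsf x_0 = \xi^{1/2}$ follows from $x_0 = \pi_0(0) = 1$.

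**Main obstacle.** The delicate part is the bookkeeping in matching matrix coefficients: the compatibility equation is four scalar equations at each of four powers of $z$, i.e. sixteen relations, most redundant, and isolating the single independent one requires care — in particular verifying that the apparent pole at $z = \xi^{-1}q^{1/2}$ cancels on both sides, which is what actually forces $T_{n;1}$ to take its specific value and is the true source of the nonlinear relation. A secondary subtlety is bracketing the correct branch of square roots in $\mathsf x_n = \xi^{1/2}q^{n/2}x_n$ so that the factor $c = \ii$ appears consistently; this is cosmetic but must be handled to land exactly on the stated normalization. I would organize the computation by first reducing modulo the lower-triangular gauge freedom to kill spurious equations, then read off the one genuine scalar identity.
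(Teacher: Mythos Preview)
Your overall plan matches the paper's proof: expand the polynomial compatibility condition degree by degree, use the inversion Lemma to constrain $T_{n;1}$, eliminate the $\kappa$'s via Lemma~\ref{lemma:Zx_relation}, and rescale to $\mathsf{x}_n$. Two points, however, deserve correction.

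First, your expectation that Lemma~\ref{lem:T_inversion} together with the factorized form of $\det T_n$ will ``determine $T_{n;1}$ completely'' with ``coefficients built from $x_n$ and $q^n$'' is mistaken. Both the inversion relation and the determinant identity involve $T_n$ alone, hence can only produce constraints in terms of $x_n$; they cannot introduce $x_{n\pm1}$. But the actual entries are
\[
\beta_n=-q^{n+1}x_{n+1},\qquad \gamma_n=-q^n(1-x_n^2)x_{n-1},\qquad
\alpha_n=\frac{q^n(x_n^2-1)(qx_{n+1}+x_{n-1})}{x_n}-q^{\frac12}\xi^{-1},
\]
all of which depend on $x_{n\pm1}$. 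What the inversion relation does give you is $\delta_n=-q^{\frac12}\xi^{-1}$ and one linear relation among $\alpha_n,\beta_n,\gamma_n$; the remaining freedom must be fixed by the compatibility equations themselves. The paper in fact interleaves the two sources: $\beta_n,\gamma_n$ come from the $z^2$--coefficient of \eqref{eq:comp_eq1}, $\delta_n$ from inversion, $\alpha_n$ from the $z^1$--coefficient, and only then does the residual entry of the $z^2$--coefficient produce the scalar recurrence \eqref{eq:x_recurrence1}. Your plan will recover once you discover this, but as written the ordering is off.

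Second, the ``main obstacle'' you flag---cancellation of the pole at $z=\xi^{-1}q^{1/2}$---is not one. As you yourself note in the first paragraph, the denominator $1-\xi^{-1}q^{1/2}z$ is $n$-independent and appears identically in $T_n$ and $T_{n+1}$, so it factors out of \eqref{eq:comp_eq1} trivially; no residue condition arises there. The genuine work is the coefficient matching you describe, and the only delicate step is recognizing that after $\alpha_n,\beta_n,\gamma_n,\delta_n$ are inserted, the apparently overdetermined system collapses to the single scalar identity \eqref{eq:x_recurrence1}.
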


\begin{proof}
    We should determine the matrix $T_n$, where we already determined $T_{n;2}$ and $T_{n;0}$.
    Set
    \begin{align}
        T_{n;1} =
        \begin{pmatrix}
            \alpha_n & \beta_n \\ \gamma_n & \delta_n
        \end{pmatrix} \, .
    \end{align}
    The compatibility condition~\eqref{eq:comp_eq1} is equivalent to the following set of matrix equations,
    \begin{subequations}
        \begin{align}
            q U_{n;1} T_{n;2} & = T_{n+1,2} U_{n;1} \, , \\
            q U_{n;1} T_{n;1} + U_{n;0} T_{n;2} & = T_{n+1;2} U_{n;0} + T_{n+1;1} U_{n;1} \label{eq:compatibility_12} \, , \\
            q U_{n;1} T_{n;0} + U_{n;0} T_{n;1} & = T_{n+1;1} U_{n;0} + T_{n+1;0} U_{n;1} \, , \label{eq:compatibility_13} \\
            U_{n;0} T_{n;0} & = T_{n+1;0} U_{n;0} \, .
        \end{align}
    \end{subequations}
    The coefficient $T_{n;1}$ is involved in the second and the third equations.
    We verify that the other coefficients of $U_n$ and $T_n$ shown in~\eqref{eq:UT_coefficients} are consistent with the first and the fourth equations.
    For the second equation, we compute
    \begin{align}
        & q U_{n;1} T_{n;1} + U_{n;0} T_{n;2} - T_{n+1;2} U_{n;0} - T_{n+1;1} U_{n;1} \nonumber \\
        & = \left(
        \begin{array}{cc}
         q \left(\alpha_n+(1-q) q^n x_n x_{n+1}\right)-\alpha_{n+1} & q \left(\beta_n+q^{n+1} x_{n+1}\right) \\
         -q^{n+1} \left(1-x_{n+1}^2\right) x_n -\gamma_{n+1} & 0 \\
        \end{array}
        \right) \, ,
    \end{align}
    hence we obtain
    \begin{align}
        \beta_n = - q^{n+1} x_{n+1} \, , \qquad 
        \gamma_n = - q^n (1 - x_{n}^2) x_{n-1} \, . 
    \end{align}
    By Lemma~\ref{lem:T_inversion}, the inversion relation of the matrix $T_n$ is equivalent to 
    \begin{subequations}
    \begin{align}
        q^{-n} K_n T_{n;0} K_n T_{n;2} & = 0 \, , \\
        q^{-n} \left( q^{-1} K_n T_{n;1} K_n T_{n;2} + K_n T_{n;0} K_n T_{n;1} \right) & = - q^{\frac{1}{2}} \xi^{-1} \, , \\
        q^{-n} \left( q^{-2} K_n T_{n;2} K_n T_{n;2} + q^{-1} K_n T_{n;1} K_n T_{n;1} + K_n T_{n;0} K_n T_{n;0} \right) & = 1 + \xi^{-2} \, , \label{eq:T_inversion_13}\\
        q^{-n} \left( q^{-2} K_n T_{n;2} K_n T_{n;1} + q^{-1} K_n T_{n;1} K_n T_{n;0} \right) & = - q^{-\frac{1}{2}} \xi^{-1} \, , \\
        q^{-n-2} K_n T_{n;2} K_n T_{n;0} & = 0 \, ,
    \end{align}
    \end{subequations}
    where the right hand side of these equations are scalar matrices.
    The first and the fifth equations are consistent with the coefficients~\eqref{eq:T_coefficients}.
    The second equation reads
    \begin{align}
        0 & = q^{-n} \left( q^{-1} K_n T_{n;1} K_n T_{n;2} + K_n T_{n;0} K_n T_{n;1} \right) + q^{\frac{1}{2}} \xi^{-1} \nonumber \\ & =
        \left(
        \begin{array}{cc}
        x_n \left(\alpha_n x_n-\gamma_n\right)+\left(x_n^2-1\right) \left(\beta_n x_n-\delta_n\right)+q^{\frac{1}{2}}\xi^{-1} & 0 \\
        \left(x_n^2-1\right) \left(x_n \left(\alpha_n-\delta_n\right)+\beta_n \left(x_n^2-1\right)-\gamma_n\right) & \delta_n+q^{\frac{1}{2}}\xi^{-1} \\
        \end{array}
        \right) \, ,
    \end{align}
    from which we determine
    \begin{align}
        \delta_n = - q^{\frac{1}{2}}\xi^{-1} \, .
    \end{align}
    We obtain the same result also from the fourth equation.
    Then, from the compatibility condition~\eqref{eq:compatibility_13}, together with $(\beta_n,\gamma_n,\delta_n)$, we obtain
    \begin{align}
        \alpha_n = \frac{q^n \left(x_n^2-1\right) \left(q x_{n+1}+x_{n-1}\right)}{x_n}-q^{\frac{1}{2}}\xi^{-1} \, .
    \end{align}
    From the second equation of the compatibility condition~\eqref{eq:compatibility_12}, we obtain the recurrence relation for $x_n$,
    \begin{align}
    \left(\xi q^{n + \frac{1}{2}} x_{n-1} x_n x_{n+1} - (x_{n-1} + q x_{n+1}) \right)(1 - x_n^2) = - q^{\frac{1}{2}} \left( \xi (1 - x_n^2) + \xi (1 - q^n) x_n^2 + \frac{1 - q^{-n}}{\xi} \right) x_n \, . \label{eq:x_recurrence1}
    \end{align}
    The same relation is obtained also from the inversion relation \eqref{eq:T_inversion_13}.
    Rewriting this relation in terms of $\mathsf{x}_n = \xi^{\frac{1}{2}} q^{\frac{n}{2}} x_n$, we obtain the result shown in~\eqref{eq:x_recurrence}.
\end{proof}

\begin{corollary}\label{cor:qto1}
    Set $\xi = (1 - q)\eta$.
    Taking the limit $q \to 1$, we obtain d-P$_\text{II}$,
    \begin{align}
     (x_{n-1} + x_{n+1}) (1 - x_n^2) + \frac{n}{\eta} x_n = 0 \, .
    \end{align}
\end{corollary}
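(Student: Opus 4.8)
\textbf{Proof proposal for Corollary~\ref{cor:qto1}.}

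The plan is to work directly with the unrescaled recurrence \eqref{eq:x_recurrence1} for $x_n=\pi_n(0)$, rather than with the $q$-P$_\text{V}$ form \eqref{eq:x_recurrence}: under $\xi=(1-q)\eta$ with $q\to 1$ one has $\mathsf{x}_n=\xi^{\frac12}q^{\frac n2}x_n\to 0$, so both sides of \eqref{eq:x_recurrence} tend to $1$ and the continuum limit is invisible at leading order, whereas \eqref{eq:x_recurrence1} is already phrased in variables in which the limit is nondegenerate. The whole proof is then a term-by-term $q\to 1$ asymptotic analysis of \eqref{eq:x_recurrence1} after the substitution $\xi=(1-q)\eta$, treating $x_{n-1},x_n,x_{n+1}$ as quantities admitting a finite limit.

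On the left-hand side, the coefficient $\xi q^{n+\frac12}=(1-q)\eta\,q^{n+\frac12}$ tends to $0$, so the cubic term $\xi q^{n+\frac12}x_{n-1}x_nx_{n+1}$ drops out, while $x_{n-1}+qx_{n+1}\to x_{n-1}+x_{n+1}$ and the factor $1-x_n^2$ is unchanged; hence the left-hand side tends to $-(x_{n-1}+x_{n+1})(1-x_n^2)$. On the right-hand side $q^{\frac12}\to 1$, and I examine the bracket summand by summand. The first summand $\xi(1-x_n^2)=(1-q)\eta(1-x_n^2)\to 0$. For the second, since $1-q^{n}=1-\ee^{n\log q}\sim n(1-q)$ as $q\to 1$, one gets $\xi(1-q^{n})x_n^2\sim(1-q)^2\eta\,n\,x_n^2\to 0$. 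The third summand is the only one that survives: $\dfrac{1-q^{-n}}{\xi}=\dfrac1\eta\cdot\dfrac{1-q^{-n}}{1-q}$, and from $1-q^{-n}=1-\ee^{-n\log q}\sim n\log q$ together with $1-q\sim-\log q$ we obtain $\dfrac{1-q^{-n}}{\xi}\to-\dfrac n\eta$. Therefore the right-hand side tends to $-\bigl(-\dfrac n\eta\bigr)x_n=\dfrac n\eta x_n$, and equating the two limits yields $(x_{n-1}+x_{n+1})(1-x_n^2)+\dfrac n\eta x_n=0$, which is d-P$_\text{II}$.

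The step that genuinely needs care — and the one I would flag as the main subtlety — is the term $\dfrac{1-q^{-n}}{\xi}$: it is the sole source of the linear-in-$n$ term of d-P$_\text{II}$, so its expansion must be carried to the correct order, and one must simultaneously notice that the superficially $O(1)$ summand $\xi(1-q^{n})x_n^2$ is in fact $o(1)$ because $1-q^{n}$ itself vanishes. A last remark concerns rigor: \eqref{eq:x_recurrence1} is an identity among the $q$-dependent quantities $x_n=x_n(q)$, so the corollary should be read as the assertion that whenever $\lim_{q\to 1}x_n(q)$ exists the limiting sequence solves d-P$_\text{II}$; the existence of this limit (with $x_0=1$, recovering the Painlevé~II solution governing the ordinary Poissonized Plancherel measure) follows from the convergence of the Toeplitz determinants $Z_n^{(k)}$ under $\xi=(1-q)\eta$, consistently with the known results for the Plancherel measure recalled in the introduction.
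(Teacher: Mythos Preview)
Your proof is correct and follows exactly the approach the paper indicates: the paper's own proof is the single sentence ``It directly follows from the relation~\eqref{eq:x_recurrence1},'' and your term-by-term limit of that recurrence under $\xi=(1-q)\eta$ is precisely the computation that line summarizes. Your additional remarks on the existence of $\lim_{q\to1}x_n(q)$ go beyond what the paper states and are a reasonable caveat, though not required for the corollary as formulated.
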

\begin{proof}
    It directly follows from the relation~\eqref{eq:x_recurrence1}.
\end{proof}

This result has a natural interpretation as a $q$-deformation of the Tracy--Widom formula for the Fredholm determinant of the Airy kernel~\cite{Tracy:1992rf}.
Let $K_\text{Ai}$ the integral operator associated with the Airy kernel. 
The Fredholm determinant for the interval $[s,\infty)$ is given by
\begin{align}
    \tau(s) := \det(1 - K_\text{Ai})_{L^2([s,\infty))} = \exp \left( - \int_s^\infty (x - s) \mathsf{q}(x)^2 \dd{x} \right) \, ,
\end{align}
where $\mathsf{q}$ solves the Painlevé II equation (P$_\text{II}$), $\mathsf{q}'' = x \mathsf{q} + 2 \mathsf{q}^3$, with the boundary condition, $\mathsf{q}(x) \xrightarrow{x \to \infty} \operatorname{Ai}(x)$ (Hastings--McLeod solution to P$_\text{II}$~\cite{Hastings1980}).

Let $\nabla$ a difference operator: $\nabla f_n = f_{n+1/2} - f_{n-1/2}$.
By Lemma~\ref{lemma:Zx_relation}, we have
\begin{align}
    \nabla^2 \log Z_n = \log (1 - x_n^2) = - x_n^2 + O(x_n^4) \, ,
\end{align}
which is an analogue of the relation, $(\log \tau)'' = - \mathsf{q}^2$. 
Hence, the Fredholm determinant is interpreted as the corresponding $\tau$-function in this context.
Moreover, again by Lemma~\ref{lemma:Zx_relation}, we have the asymptotic behavior, $|\mathsf{x}_n| \ll \xi^{\frac{1}{2}} q^{\frac{n}{2}} \ll 1$, in the limit $n \to \infty$.
In this situation, we may omit the non-linear terms in the recurrence relation to have a linear relation,
\begin{align}
 \mathsf{x}_{n-1} + \mathsf{x}_{n+1} = \left( \frac{1 - q^{-n}}{\xi} + \xi \right) \mathsf{x}_n \, .
\end{align}
This agrees with the recurrence relation for the Hahn--Exton $q$-Bessel function~\eqref{eq:q-Bessel_rec_rel3}, implying the asymptotic behavior, $\mathsf{x}_n \xrightarrow{n \to \infty} J_{-n}^{(3)}(2\xi;q)$.
From this point of view, our solution to $q$-P$_\text{V}$ is interpreted as a $q$-deformed version of the Hastings--McLeod solution.
Such a connection to the continuous case was similarly discussed in the limit $q \to 1$~\cite{Borodin:2003Duke,Chouteau:2023}.

\subsection{Weight function $\check{\mathbb{I}}$}\label{sec:Icheck}

Set $\check{\mathbb{I}}(z) = \check{\mathbb{I}}(z;\xi)$.
Let $\check{Y}_n$ be the matrix-valued function defined for the weight function $\check{\mathbb{I}}$ instead of the previous one $\mathbb{I}$.

\begin{proposition}    
    The matrix-valued function,
    \begin{align}
        \Phi(z) = \Phi_n(z) = 
        \begin{pmatrix}
            1 & 0 \\ 0 & \check{\kappa}_n^{-2}
        \end{pmatrix} \check{Y}_n(z) 
        \begin{pmatrix}
            1 & 0 \\ 0 & z^n \check{\mathbb{I}}(z)^{-1}
        \end{pmatrix} \, , 
        \label{eq:Psi2_def}
    \end{align}
    solves the following Riemann--Hilbert problem:
    \begin{enumerate}
        \item $\Phi$ is analytic on $\mathbb{C}\backslash\check{\mathbb{D}}$ with $\check{\mathbb{D}} = \mathbb{T} \cup (- \xi) q^{\mathbb{Z}_{>0}'} \cup (- \xi^{-1}) q^{\mathbb{Z}_{<0}'}$.

        \item The boundary values on $\mathbb{T}$ denoted by $\Phi_\pm$ have the relation, $\displaystyle \Phi_+ = \Phi_- 
        \begin{pmatrix}
            1 & 1 \\ 0 & 1 
        \end{pmatrix}$.

        \item The asymptotic behavior is given by
        \begin{subequations}
            \begin{align}
                \Phi(z) \xrightarrow{z \to \infty} & \begin{pmatrix}
            1 & 0 \\ 0 & \check{\kappa}_n^{-2}
        \end{pmatrix} 
        \begin{pmatrix}
            z^n & 0 \\ 0 & \check{\mathbb{I}}(z)^{-1}
        \end{pmatrix} (1 + O(z^{-1})) \, , \\[.5em]
        \Phi(z) \xrightarrow{z \to 0} & 
        \begin{pmatrix}
            1 & 0 \\ 0 & \check{\kappa}_n^{-2}
        \end{pmatrix} \check{Y}_n(0) 
        \begin{pmatrix}
            1 & 0 \\ 0 & z^n \check{\mathbb{I}}(z)^{-1}
        \end{pmatrix} (1 + O(z)) \, ,
            \end{align}
        \end{subequations}
        where $\displaystyle \check{Y}_n(0) = 
        \begin{pmatrix}
        y_n & \check{\kappa}_n^{-2} \\ - \check{\kappa}_{n-1}^2 & y_n
        \end{pmatrix}$.
    \end{enumerate}
\end{proposition}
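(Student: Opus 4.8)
The plan is to transcribe, essentially verbatim, the proof of the corresponding statement for $\Psi$ and the weight $\mathbb{I}$: the dressing \eqref{eq:Psi2_def} applied to $\check{Y}_n$ is upper triangular of exactly the same shape, so only the bookkeeping of the extra singularities differs. Throughout I would use that $\check{Y}_n$ solves the orthogonal-polynomial Riemann--Hilbert problem for the weight $\check{\mathbb{I}}$ (the analogue for $\check{\mathbb{I}}$ of the characterization in~\cite{Baik:1999JAMS}): it is analytic with unit determinant on $\mathbb{C}\setminus\mathbb{T}$, its boundary values satisfy $\check{Y}_{n,+}(z) = \check{Y}_{n,-}(z)\begin{pmatrix} 1 & z^{-n}\check{\mathbb{I}}(z) \\ 0 & 1\end{pmatrix}$, and it has the standard behaviour at $z\to\infty$ and $z\to 0$; in particular $\check{Y}_n(0) = \begin{pmatrix} y_n & \check{\kappa}_n^{-2} \\ -\check{\kappa}_{n-1}^2 & y_n\end{pmatrix}$ by the analogue of~\cite{Baik2016}.

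For item~(1), analyticity of $\Phi_n$ away from $\mathbb{T}$ is inherited from $\check{Y}_n$ except where the lower-right dressing factor $z^n\check{\mathbb{I}}(z)^{-1}$ becomes singular, i.e.\ at the zeros of $\check{\mathbb{I}}$. Reading $\check{\mathbb{I}}(z) = (-\xi q^{\frac12}z, -\xi q^{\frac12}z^{-1};q)_\infty$ off \eqref{eq:I_definition}, the factor $(-\xi q^{\frac12}z;q)_\infty$ vanishes precisely at $z \in (-\xi^{-1})q^{\mathbb{Z}_{<0}'}$ and $(-\xi q^{\frac12}z^{-1};q)_\infty$ precisely at $z \in (-\xi)q^{\mathbb{Z}_{>0}'}$, whose union is exactly $\check{\mathbb{D}}\setminus\mathbb{T}$. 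The point that deserves a moment's attention — in contrast with the $\mathbb{I}$ case — is that here the weight itself is entire, so the new punctures come from the \emph{zeros} of $\check{\mathbb{I}}$ rather than from poles of the weight.

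For item~(2), I would substitute the jump relation for $\check{Y}_n$ into \eqref{eq:Psi2_def} and use $\operatorname{diag}(1,\check{\kappa}_n^{-2})\check{Y}_{n,-} = \Phi_{n,-}\operatorname{diag}(1, z^{-n}\check{\mathbb{I}})$; the three diagonal/triangular factors then telescope,
\begin{align}
\Phi_{+} = \Phi_{-}\,\operatorname{diag}(1, z^{-n}\check{\mathbb{I}})\begin{pmatrix} 1 & z^{-n}\check{\mathbb{I}} \\ 0 & 1\end{pmatrix}\operatorname{diag}(1, z^{n}\check{\mathbb{I}}^{-1}) = \Phi_{-}\begin{pmatrix} 1 & 1 \\ 0 & 1\end{pmatrix}\, ,
\end{align}
exactly as in the computation for $\Psi$, but with the corners $\mathbb{I}$ and $\check{\mathbb{I}}^{-1}$ interchanged. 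For item~(3), I would plug in the two expansions of $\check{Y}_n$: at $z\to\infty$, using $\operatorname{diag}(z^n, z^{-n})\operatorname{diag}(1, z^n\check{\mathbb{I}}^{-1}) = \operatorname{diag}(z^n, \check{\mathbb{I}}^{-1})$ and keeping the $I + O(z^{-1})$ correction between the two diagonal matrices (as was done for $\Psi$); and at $z\to 0$, substituting $\check{Y}_n(0) + O(z)$ with $\check{Y}_n(0)$ as above.

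I do not expect a serious obstacle; the argument is a direct transcription of the $\mathbb{I}$ case. The only two places where one should not be careless are the identification of $\check{\mathbb{D}}\setminus\mathbb{T}$ with the zero set of the entire function $\check{\mathbb{I}}$ in item~(1), and, in the $z\to\infty$ asymptotics of item~(3), the fact that the conjugating factor is now $\check{\mathbb{I}}(z)^{-1}$ rather than a monomial — but since $\check{\mathbb{I}}$ is entire of order zero it interacts with the decaying Cauchy-transform entries of $\check{Y}_n$ in the same harmless way as in the proof for $\Psi$, and nothing beyond routine matrix algebra remains.
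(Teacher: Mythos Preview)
Your proposal is correct and follows essentially the same approach as the paper: the paper does not spell out a separate proof for this proposition, relying instead on the analogous computation for $\Psi$ and the remark that $\check{\mathbb{I}}(z;\xi)=\mathbb{I}(z;-\xi)^{-1}$, which is exactly the transcription you carry out. Your observation that the punctures in $\check{\mathbb{D}}\setminus\mathbb{T}$ now arise from the \emph{zeros} of the entire weight $\check{\mathbb{I}}$ (rather than from poles, as in the $\mathbb{I}$ case) is the one genuine bookkeeping difference, and you have identified it correctly.
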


Compared with the previous case, the weight function dependence is different in the matrix-valued function.
In fact, $\check{\mathbb{I}}(z;\xi) = \mathbb{I}(z;-\xi)^{-1}$.
Hence, in this case, we have
\begin{align}
    \frac{\check{\mathbb{I}}(z)}{\check{\mathbb{I}}(qz)} = \frac{1 + \xi q^{\frac{1}{2}} z}{1 + \xi q^{-\frac{1}{2}} / z} =
    \begin{cases}
        \xi q^{\frac{1}{2}} z (1+O(z^{-1})) & (z \to \infty) \\
        O(z) & (z \to 0)
    \end{cases}
    \label{eq:I_ratio2}
\end{align}
This behavior plays an essential role in the Riemann--Hilbert problem as we discuss below.

\begin{proposition}
    We have the following Lax pair,
    \begin{align}
    \Phi_{n+1}(z) = U_n(z) \Phi_n(z) \, , \qquad 
    \Phi_n(qz) = T_n(z) \Phi_n(z) \, .
    \end{align}
    where
    \begin{align}
        U_n(z) = U_{n;1} z + U_{n;0} \, , \qquad
        T_n(z) = \frac{T_{n;2} z^2 + T_{n;1} z + T_{n;0}}{1 + z \xi^{-1} q^{\frac{1}{2}}} \, .
    \end{align}        
    We have the same matrix $U_{n}$ as before \eqref{eq:U_coefficients} under the replacement $x_n \mapsto y_n$, while the coefficients of the matrix $T_n$ are given by 
    \begin{align}
        T_{n;2} = 
        \begin{pmatrix}
            0 & 0 \\ 0 & q
        \end{pmatrix} \, , \qquad 
        T_{n;0} = 
        \begin{pmatrix}
            y_n^2 & - y_n \\ -(1-y_n^2) y_n & 1 - y_n^2
        \end{pmatrix} \, .
    \end{align}
\end{proposition}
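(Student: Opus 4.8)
The plan is to mirror the two-step derivation used for the weight function $\mathbb{I}$, the only new ingredient being that in \eqref{eq:Psi2_def} the weight now sits in the lower-right slot of the dressing matrix as $z^n\check{\mathbb{I}}(z)^{-1}$, rather than in the upper-left slot as $\mathbb{I}(z)$; this interchanges the roles of the growing and decaying powers of $z$ throughout the asymptotic analysis. It is convenient to recall that $\check{\mathbb{I}}(z;\xi)=\mathbb{I}(z;-\xi)^{-1}$, so that the structural facts about $\check Y_n$ (analyticity and $\det\check Y_n=1$ on $\mathbb{C}\setminus\mathbb{T}$, the value $\check Y_n(0)$ given in the statement, its inversion relation) are the exact counterparts of those for $Y_n$ in \S\ref{sec:OPUC}--\S\ref{sec:RHP}. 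In particular I would first record the counterpart of Lemma~\ref{lemma:Zx_relation}, namely $\check\kappa_{n-1}^2/\check\kappa_n^2=1-y_n^2$, so that $\det\check Y_n(0)=1$ and $\check Y_n(0)^{-1}=\bigl(\begin{smallmatrix}y_n&-\check\kappa_n^{-2}\\ \check\kappa_{n-1}^2&y_n\end{smallmatrix}\bigr)$.

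\textbf{Step 1 (the matrix $U_n$).} Put $U_n(z)=\Phi_{n+1}(z)\Phi_n(z)^{-1}$. Since $\Phi_{n+1}$ and $\Phi_n$ share the constant jump matrix $\bigl(\begin{smallmatrix}1&1\\0&1\end{smallmatrix}\bigr)$ on $\mathbb{T}$, one has $\Phi_{n+1,+}\Phi_{n,+}^{-1}=\Phi_{n+1,-}\Phi_{n,-}^{-1}$, so $U_n$ extends analytically across $\mathbb{T}$; the factors $z^n\check{\mathbb{I}}(z)^{-1}$ cancel in the product, leaving $U_n$ entire and, by its behavior as $z\to\infty$, a degree-one polynomial $U_{n;1}z+U_{n;0}$. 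Its coefficients are read off from the $z\to\infty$ and $z\to 0$ expansions of $\check Y_n$ and $\check Y_{n+1}$ in exactly the way $U_{n}$ was obtained from $Y_n$, $Y_{n+1}$ in the $\mathbb{I}$ case; since $\check Y_n(0)$ has the same shape as $Y_n(0)$ with $x_n$ replaced by $y_n$, this produces $U_{n;1}=\bigl(\begin{smallmatrix}1&0\\0&0\end{smallmatrix}\bigr)$ and $U_{n;0}$ as in \eqref{eq:U_coefficients} with $x_n\mapsto y_n$.

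\textbf{Step 2 (the matrix $T_n$).} Put $T_n(z)=\Phi_n(qz)\Phi_n(z)^{-1}$. The constant jump matrix again gives $\Phi_+(qz)\Phi_+(z)^{-1}=\Phi_-(qz)\Phi_-(z)^{-1}$, so $T_n$ is analytic across $\mathbb{T}$, and after cancellation the only nontrivial factor is $\operatorname{diag}\bigl(1,\,q^n\,\check{\mathbb{I}}(z)/\check{\mathbb{I}}(qz)\bigr)$. By \eqref{eq:I_ratio2} this ratio is $O(z)$ near $0$, is $\xi q^{\frac12}z\,(1+O(z^{-1}))$ near $\infty$, and has its only pole on $\mathbb{C}\setminus\{0\}$ at $z=-\xi q^{-\frac12}$; hence $(1+z\xi^{-1}q^{\frac12})T_n(z)$ is a matrix polynomial of degree two, which is the claimed form. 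To fix $T_{n;2}$: as $z\to\infty$ the diagonal factor has upper-left entry $q^n\cdot O(1)$ and lower-right entry $\sim\xi q^{n+\frac12}z$, so, after unwinding the $z^{\pm n}$ from $\check Y_n(qz)$ and $\check Y_n(z)^{-1}$ and conjugating, the $z$-linear part of $T_n$ lies in the $(2,2)$ slot; matching it against $\xi q^{-\frac12}T_{n;2}z$ gives $T_{n;2}=\bigl(\begin{smallmatrix}0&0\\0&q\end{smallmatrix}\bigr)$. To fix $T_{n;0}$: as $z\to 0$ the diagonal factor collapses to $\operatorname{diag}(1,0)$, whence $T_n(0)=\bigl(\begin{smallmatrix}1&0\\0&\check\kappa_n^{-2}\end{smallmatrix}\bigr)\check Y_n(0)\operatorname{diag}(1,0)\check Y_n(0)^{-1}\bigl(\begin{smallmatrix}1&0\\0&\check\kappa_n^{2}\end{smallmatrix}\bigr)$, and substituting $\check Y_n(0)$ and $\check Y_n(0)^{-1}$ from Step 1 gives $T_{n;0}=\bigl(\begin{smallmatrix}y_n^2&-y_n\\ -(1-y_n^2)y_n&1-y_n^2\end{smallmatrix}\bigr)$.

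The main obstacle is not any single hard computation but keeping the swapped asymptotics straight: because the weight occupies the $(2,2)$ slot in \eqref{eq:Psi2_def}, the diagonal entry carrying the growing power of $z$ is interchanged relative to the $\Psi$-problem, which is exactly why the $z^{n+1}$ appearing in $T_{n;2}$ of the $\mathbb{I}$-case becomes the constant $q$ in the $(2,2)$ entry here, and why the pole of $T_n$ moves from $z=\xi q^{-\frac12}$ to $z=-\xi q^{-\frac12}$. One must therefore be careful in Step 2 about the order in which the $\operatorname{diag}(1,\check\kappa_n^{\mp2})$ conjugations and the $z^{n}$-powers are combined. The remaining coefficient $T_{n;1}$ is not part of this proposition; it would be pinned down afterwards, just as in Lemma~\ref{lem:T_inversion} and Theorem~\ref{thm:qPV1}, from the inversion relation for $T_n$ together with the compatibility condition $U_n(qz)T_n(z)=T_{n+1}(z)U_n(z)$, which is the route to the recurrence for $y_n$.
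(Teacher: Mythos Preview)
Your proposal is correct and follows essentially the same route as the paper: define $U_n=\Phi_{n+1}\Phi_n^{-1}$ and $T_n=\Phi_n(q\cdot)\Phi_n^{-1}$, use the constant jump to get analyticity across $\mathbb{T}$, identify the pole of $T_n$ from the ratio $\check{\mathbb{I}}(z)/\check{\mathbb{I}}(qz)$ via \eqref{eq:I_ratio2}, and read off $T_{n;2}$ and $T_{n;0}$ from the $z\to\infty$ and $z\to 0$ asymptotics. One small wording slip: the middle diagonal factor $\operatorname{diag}\bigl(1,\,q^n\check{\mathbb{I}}(z)/\check{\mathbb{I}}(qz)\bigr)$ has upper-left entry equal to $1$, not $q^n\cdot O(1)$; the $q^n$ appears only after you absorb the $z^{\pm n}$ from $\check Y_n(qz)$ and $\check Y_n(z)^{-1}$, giving the effective diagonal $\operatorname{diag}(q^n,\check{\mathbb{I}}(z)/\check{\mathbb{I}}(qz))$ --- but this does not affect your conclusion, since only the $(2,2)$ entry contributes at leading order in $z$.
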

\begin{proof}
    The matrix $U_n$ is completely the same as before.
    For the matrix $T_n$, we have
    \begin{align}
        T_n(z) & = \Phi_n(qz) \Phi_n(z)^{-1} = 
        \begin{pmatrix}
            1 & 0 \\ 0 & \check{\kappa}_n^{-2}
        \end{pmatrix} \check{Y}_n(qz) 
        \begin{pmatrix}
            1 & 0 \\ 0 & q^n \frac{\check{\mathbb{I}}(z)}{\check{\mathbb{I}}(qz)}
        \end{pmatrix} \check{Y}_n(z)^{-1}
        \begin{pmatrix}
            1 & 0 \\ 0 & \check{\kappa}_n^{2}
        \end{pmatrix} \, .
    \end{align}
    Hence, from the $q$-shift behavior of the weight function given in \eqref{eq:I_ratio2}, we identify the pole of the matrix $T_n$ at $z = -\xi q^{-\frac{1}{2}}$.
    The asymptotic behavior is given at $z \to \infty$ by
    \begin{align}
        T_n(z) & = 
        \begin{pmatrix}
            0 & 0 \\ 0 & \xi q^{\frac{1}{2}} z
        \end{pmatrix} (1 + O(z^{-1})) \, ,
    \end{align}
    and, at $z \to 0$, we have
    \begin{align}
        T_n(z) & = 
        \begin{pmatrix}
            1 & 0 \\ 0 & \check{\kappa}_n^{-2}
        \end{pmatrix} \check{Y}_n(0) 
        \begin{pmatrix}
            1 & 0 \\ 0 & 0
        \end{pmatrix} \check{Y}_n(0)^{-1}
        \begin{pmatrix}
            1 & 0 \\ 0 & \check{\kappa}_n^{2}
        \end{pmatrix} (1 + O(z)) \nonumber \\
        & = 
        \begin{pmatrix}
            y_n^2 & - y_n \\ -(1-y_n^2) y_n & 1-y_n^2
        \end{pmatrix} (1 + O(z)) \, ,
    \end{align}
    from which we determine $T_{n;2}$ and $T_{n;0}$.
\end{proof}

\begin{theorem}\label{thm:qPV2}
    Set $\mathsf{y}_n = (-\xi)^{\frac{1}{2}} q^{-\frac{n}{2}} y_n$.
    The compatibility condition
    \begin{align}
    U_n(qz) T_n(z) = T_{n+1}(z) U_n(z) \, ,
    \end{align}
    is equivalent to the recurrence relation,
    \begin{align}
        \left( \mathsf{y}_n \mathsf{y}_{n+1} - 1 \right) \left( \mathsf{y}_{n-1} \mathsf{y}_{n} - 1 \right) 
        & = \frac{(\mathsf{y}_n^2 + \xi)(\mathsf{y}_n^2 + \xi^{-1})}{(1 + \xi^{-1} q^{n} \mathsf{y}_n^2)} \, ,
    \end{align}
    with the initial condition $\mathsf{y}_0 = (-\xi)^{\frac{1}{2}}$, which is identified with $q$-P$_\text{V}$~\eqref{eq:qPV} under the parameter identification,
    \begin{align}
    a = (-\xi)^{\frac{1}{2}} \, , \quad b = - (-\xi)^{\frac{1}{2}}, \quad c = \ii \, , \quad q_n = q_0 \lambda^n \, , \quad q_0 = \xi^{-\frac{1}{2}} \, \quad \lambda = q^{\frac{1}{2}} \, .
    \end{align}
\end{theorem}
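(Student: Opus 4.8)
The plan is to rerun the argument of the proof of Theorem~\ref{thm:qPV1}, now for the inverted weight $\check{\mathbb{I}}(z;\xi) = \mathbb{I}(z;-\xi)^{-1}$, keeping careful track of the signs and powers of $q$ that change along the way. The Lax pair $(U_n,T_n)$ is already available from the preceding Proposition: $U_n$ is the same matrix-valued polynomial as in the $\mathbb{I}$ case with $x_n$ replaced by $y_n$, and $T_n(z) = (T_{n;2}z^2 + T_{n;1}z + T_{n;0})/(1 + z\xi^{-1}q^{\frac12})$ with $T_{n;2} = \operatorname{diag}(0,q)$ and $T_{n;0}$ already expressed through $y_n$; only $T_{n;1}$ remains unknown. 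First I would clear the common denominator from the compatibility condition $U_n(qz)T_n(z) = T_{n+1}(z)U_n(z)$, turning it into the polynomial identity $U_n(qz)(T_{n;2}z^2 + T_{n;1}z + T_{n;0}) = (T_{n+1;2}z^2 + T_{n+1;1}z + T_{n+1;0})U_n(z)$, and match the coefficients of $z^0$ through $z^3$. The $z^3$ and $z^0$ equations are automatically satisfied by the already-known $T_{n;2}$ and $T_{n;0}$, while the $z^2$ and $z^1$ equations involve the unknown $T_{n;1} = \left(\begin{smallmatrix}\alpha_n & \beta_n \\ \gamma_n & \delta_n\end{smallmatrix}\right)$.

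From the $z^2$-equation I expect to read off $\beta_n$, $\gamma_n$ and one diagonal entry in terms of $y_{n-1}, y_n, y_{n+1}$, exactly as $\beta_n$, $\gamma_n$ and $\delta_n$ were pinned down in the $\mathbb{I}$ case. To fix the remaining entry of $T_{n;1}$ I would establish the analogue of Lemma~\ref{lem:T_inversion}, i.e.\ an inversion relation $T_n(z)^{-1} = q^{\mp n}\check K_n\, T_n((qz)^{-1})\,\check K_n$ for an involutive matrix $\check K_n$ built from $\check Y_n(0)$ and $\sigma_3$. As before, this follows from the reflection symmetry $\check{\mathbb{I}}(z) = \check{\mathbb{I}}(z^{-1})$ (equivalently $\check I_n = \check I_{-n}$, Remark~\ref{rmk:inversion_symmetry_I}), the $\check Y_n$-inversion relation parallel to~\eqref{eq:Y_inversion}, and the explicit form of $\check Y_n(0)$; one must recompute $\check K_n$ and the scalar prefactor from scratch, because in the definition~\eqref{eq:Psi2_def} of $\Phi_n$ the weight sits in the $(2,2)$-slot rather than the $(1,1)$-slot, so the cancellation of the $z^n$ and $\check{\mathbb{I}}$ factors is organized differently than for $\Psi_n$. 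With $T_{n;1}$ completely determined, one of the entries of the $z^2$- (or $z^1$-) equation, cross-checked against the inversion relation, yields a single scalar three-term recurrence for $y_n$, of the same shape as~\eqref{eq:x_recurrence1} but with the changes induced by $\xi \mapsto -\xi$ and by the reversed sign of the $q$-shift ratio~\eqref{eq:I_ratio2}.

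Finally I would pass to $\mathsf{y}_n = (-\xi)^{\frac12} q^{-\frac{n}{2}} y_n$. The prefactors are chosen precisely so that $\xi^{-1}q^{n}\mathsf{y}_n^2 = -y_n^2$, which rationalizes the right-hand side into $(\mathsf{y}_n^2+\xi)(\mathsf{y}_n^2+\xi^{-1})/(1+\xi^{-1}q^{n}\mathsf{y}_n^2)$; the identification with $q$-P$_\text{V}$~\eqref{eq:qPV} under $a = (-\xi)^{\frac12}$, $b = -(-\xi)^{\frac12}$, $c = \ii$, $q_n = q_0\lambda^n$ with $q_0 = \xi^{-\frac12}$, $\lambda = q^{\frac12}$ is then immediate upon expanding $(\mathsf y_n-a)(\mathsf y_n-a^{-1})(\mathsf y_n-b)(\mathsf y_n-b^{-1})$ and $(1-cq_n\mathsf y_n)(1-c^{-1}q_n\mathsf y_n)$. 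The initial condition $\mathsf{y}_0 = (-\xi)^{\frac12}$ follows at once from $\check\pi_0 \equiv 1$, hence $y_0 = \check\pi_0(0) = 1$.

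The main obstacle is not conceptual but bookkeeping: since $\check{\mathbb{I}}$ enters inverted and its $q$-shift ratio~\eqref{eq:I_ratio2} carries the opposite sign to the $\mathbb{I}$ case, every occurrence of $\xi q^{\frac12}$ and of $q^{n}$ in the proof of Theorem~\ref{thm:qPV1} must be re-derived rather than copied, and a single dropped sign would produce the $q$-P$_\text{V}$ recurrence with $q^{-n}$ instead of $q^{n}$ or with $+\xi$ and $-\xi$ interchanged. A useful sanity check at the end, in the spirit of Corollary~\ref{cor:qto1}, is that the scaling $\xi = (1-q)\eta$, $q\to1$ should collapse the $y_n$-recurrence to a discrete Painlevé~II type equation.
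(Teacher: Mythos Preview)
Your proposal is correct and follows essentially the same route as the paper: rerun the $\mathbb{I}$-argument for the inverted weight $\check{\mathbb{I}}(z;\xi)=\mathbb{I}(z;-\xi)^{-1}$, determine $T_{n;1}$ from the compatibility condition plus an inversion relation, and then rescale to $\mathsf{y}_n$. The paper's own proof is in fact even terser---it simply invokes ``the same proof as before under $\xi\mapsto-\xi$''---whereas you are right to flag that the weight sits in the $(2,2)$-slot of $\Phi_n$ rather than the $(1,1)$-slot of $\Psi_n$, so the $q$-power in the $\check K_n$-inversion relation and the placement of the nonzero entry in $T_{n;2}$ genuinely need to be re-derived rather than copied; your bookkeeping caveats are well placed.
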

\begin{proof}
    The same proof is applied to this case as before.
    Compared with the previous case, we obtain the expressions under the replacement, $\xi \mapsto - \xi$.
\end{proof}

In the limit $n \to \infty$, similar to the previous case, we have the asymptotic behavior, $|\mathsf{y}_n| \ll 1$.
Hence, the non-linear recurrence relation is reduced to the linear relation in this limit,
\begin{align}
 \mathsf{y}_{n-1} + \mathsf{y}_{n+1} = - \left( \frac{1 - q^{n}}{\xi} + \xi \right) \mathsf{y}_n \, , 
\end{align}
which is the recurrence relation for the Hahn--Exton $q$-Bessel function, $\mathsf{y}_n \xrightarrow{n \to \infty} J_n^{(3)}(-2\xi;q)$.

\begin{corollary}
    Set $\xi = (1 - q)\eta$.
    In the limit $q\to 1$, we obtain d-P$_\text{II}$,
    \begin{align}
        (y_{n-1} + y_{n+1})(1 - y_n^2) + \frac{n}{\eta} y_n = 0 \, .
    \end{align}
\end{corollary}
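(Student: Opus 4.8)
The plan is to follow the proof of Corollary~\ref{cor:qto1} almost verbatim, with the $\check{\mathbb{I}}$-data replacing the $\mathbb{I}$-data. First I would record the explicit nonlinear recurrence for $y_n = \check{\pi}_n(0)$ that the compatibility condition of Theorem~\ref{thm:qPV2} produces, i.e., the $\check{\mathbb{I}}$-counterpart of~\eqref{eq:x_recurrence1}. Running the computation in the proof of Theorem~\ref{thm:qPV1} with the $\check{\mathbb{I}}$-Lax matrices $T_{n;2}$ and $T_{n;0}$ (and the pole of $T_n$ now located at $z = -\xi q^{-\frac{1}{2}}$) --- equivalently, undoing the rescaling $\mathsf{y}_n = (-\xi)^{\frac{1}{2}} q^{-\frac{n}{2}} y_n$ in the $q$-P$_\text{V}$ relation of Theorem~\ref{thm:qPV2} and clearing denominators --- one arrives at a relation of the shape
\begin{align}
    \xi^2 q^{-n} y_{n-1} y_n y_{n+1}(1-y_n^2) + \xi \left( q^{-\frac{1}{2}} y_{n+1} + q^{\frac{1}{2}} y_{n-1} \right)(1-y_n^2) + (1-q^n) y_n + \xi^2 \left( 1 - q^{-n} y_n^2 \right) y_n = 0 \, .
\end{align}

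Next I would substitute $\xi = (1-q)\eta$ and divide through by $1-q$, so that the limit is non-degenerate: every term except the neighbour-coupling term and the linear ``force'' term then carries a strictly positive power of $1-q$ and drops out. Passing to $q \to 1$ and using $\lim_{q\to1} q^{\pm n} = 1$ together with $\lim_{q\to1} \frac{1-q^{\pm n}}{1-q} = \pm n$, the surviving contributions are $\eta\,(y_{n-1}+y_{n+1})(1-y_n^2)$ from the $\xi\,(q^{\mp\frac{1}{2}} y_{n\pm1})(1-y_n^2)/(1-q)$ pieces and $n\,y_n$ from $(1-q^n) y_n/(1-q)$; dividing by $\eta$ these assemble into the claimed d-P$_\text{II}$, $(y_{n-1}+y_{n+1})(1-y_n^2) + \frac{n}{\eta}\,y_n = 0$.

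The only genuine point of care --- and the main place an error could slip in --- is the bookkeeping of signs and of the $q^{\pm n}$ exponents when transporting the Theorem~\ref{thm:qPV1} computation to the $\check{\mathbb{I}}$ case: relative to the $\mathbb{I}$ case the two diagonal entries of $T_{n;2}$ and $T_{n;0}$ exchange roles and the overall $q^n$ prefactor of $T_{n;0}$ is absent, which effectively performs $n \mapsto -n$ in several places of~\eqref{eq:x_recurrence1} on top of $\xi \mapsto -\xi$. Since $q^{\pm n} \to 1$ and $\frac{1-q^{\pm n}}{1-q} \to \pm n$ in the limit, none of this changes the leading order, and one checks that the residual signs conspire to give $+n/\eta$ as the coefficient of $y_n$, exactly as in Corollary~\ref{cor:qto1}. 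Everything else is the routine $q \to 1$ expansion already performed there, and the statement follows.
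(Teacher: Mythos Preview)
Your proposal is correct and follows exactly the route the paper intends: the paper's own proof of this corollary is omitted (mirroring Corollary~\ref{cor:qto1}, whose proof reads simply ``It directly follows from the relation~\eqref{eq:x_recurrence1}''), and your argument carries out precisely that step for the $\check{\mathbb{I}}$-case by writing down the unrescaled recurrence for $y_n$ and taking the $q\to 1$ limit with $\xi=(1-q)\eta$. Your displayed identity for $y_n$ is indeed what one obtains by undoing the rescaling $\mathsf{y}_n=(-\xi)^{1/2}q^{-n/2}y_n$ in Theorem~\ref{thm:qPV2} and clearing denominators, so the limit computation is exactly as you describe.
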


\appendix

\section{$q$-functions}\label{sec:q-functions}

\subsubsection*{MacMahon function}
Let $|q| < 1$.
We define a modified MacMahon function,
\begin{align}
    M(\xi;q) = \prod_{n \ge 1} \frac{1}{(1 - \xi^2 q^n)^n} \, . \label{eq:MacMahon_fn}
\end{align}

\begin{proposition}\label{prop:MacMahon_PE}
    We have
    \begin{align}
    M(\xi;q) = \exp \left( \sum_{n \ge 1} \frac{\xi^{2n}}{n(q^{\frac{n}{2}}-q^{-\frac{n}{2}})^2} \right) \, . 
\end{align}
\end{proposition}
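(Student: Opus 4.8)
The plan is to take logarithms and expand. First I would apply $-\log(1-x) = \sum_{k \ge 1} x^k/k$ to each factor of the product~\eqref{eq:MacMahon_fn}; this is legitimate since $|\xi^2 q^n| \le |\xi^2 q| < 1$ for every $n \ge 1$ when $|q|, |\xi| < 1$. This turns the logarithm of the product into a double series,
\begin{align}
    \log M(\xi;q) = - \sum_{n \ge 1} n \log(1 - \xi^2 q^n) = \sum_{n \ge 1} \sum_{k \ge 1} \frac{n}{k} \, \xi^{2k} q^{nk} \, .
\end{align}

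Next I would justify interchanging the two summations. Bounding the inner sum over $k$ by a geometric series, $\sum_{k \ge 1} |\xi^2 q^n|^k/k \le |\xi^2 q^n|/(1 - |\xi^2 q|)$, and then using $\sum_{n \ge 1} n |q|^n < \infty$, one sees that the double series converges absolutely, so Fubini applies. After swapping the order and invoking the identity $\sum_{n \ge 1} n x^n = x/(1-x)^2$ (valid for $|x|<1$, obtained by differentiating the geometric series) with $x = q^k$, I obtain
\begin{align}
    \log M(\xi;q) = \sum_{k \ge 1} \frac{\xi^{2k}}{k} \sum_{n \ge 1} n q^{nk} = \sum_{k \ge 1} \frac{\xi^{2k}}{k} \cdot \frac{q^k}{(1 - q^k)^2} \, .
\end{align}

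Finally I would use the elementary identity $(q^{\frac{k}{2}} - q^{-\frac{k}{2}})^2 = q^{-k}(1-q^k)^2$, equivalently $q^k/(1-q^k)^2 = 1/(q^{\frac{k}{2}} - q^{-\frac{k}{2}})^2$, to rewrite the last sum as $\sum_{k \ge 1} \xi^{2k}/\bigl(k(q^{\frac{k}{2}} - q^{-\frac{k}{2}})^2\bigr)$; exponentiating then gives the assertion. The argument is entirely routine: the only point deserving a line of justification is the interchange of summations, which is precisely the absolute-convergence estimate above, so there is no genuine obstacle here.
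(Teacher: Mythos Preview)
Your proof is correct and follows essentially the same route as the paper: take logarithms, expand each factor via the Taylor series of $-\log(1-x)$, swap the two summations, and evaluate the inner sum as $q^k/(1-q^k)^2$. You include the justification for the interchange and the final algebraic rewriting $(q^{k/2}-q^{-k/2})^2 = q^{-k}(1-q^k)^2$ that the paper leaves implicit, but the argument is otherwise identical.
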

\begin{proof}
    We may use the series expansion of $\log (1 - z) = - \sum_{n \ge 1} \frac{z^n}{n}$. For $|q|<1$, we have
    \begin{align}
        \prod_{n \ge 1} \frac{1}{(1 - \xi^2 q^n)^n} 
        & = \exp \left( - \sum_{n \ge 1} n \log (1 - \xi^2 q^n) \right) \nonumber \\
        & = \exp \left( \sum_{n \ge 1} \sum_{m \ge 1} n \frac{\xi^{2m} q^{nm}}{m} \right) \nonumber \\
        & = \exp \left( \sum_{m \ge 1} \frac{\xi^{2m}}{m} \frac{q^m}{(1-q^m)^2} \right) \, ,
    \end{align}
    which agrees with the desired expression.
\end{proof}

Let $p(n)$ be the number of plane partitions of size $n$.
The generating function of $p(n)$ is known to be the MacMahon function,
\begin{align}
    M(1;q) = \prod_{n \ge 1} \frac{1}{(1 - q^n)^n} = \sum_{n = 0}^\infty p(n) q^n \, .
\end{align}

\subsubsection*{$q$-shifted factorial}
We define the $q$-shifted factorial,
\begin{align}
    (x;q)_n := \prod_{k=0}^{n-1} (1 - x q^k) \, .
\end{align}
The limit $\lim_{n \to \infty} (x;q)_n = (x;q)_\infty$ converges for $|q|<1$.
We write
\begin{align}
    (x_1,x_2,\ldots,x_k;q)_n = (x_1;q)_n (x_2;q)_n \cdots (x_k;q)_n \, .
\end{align}

\subsubsection*{Basic hypergeometric series}

We define the basic hypergeometric series,
\begin{align}
    & {}_r\phi_s(a_1,\ldots,a_r;b_1,\ldots,b_s;q,x) 
    = \sum_{n = 0}^\infty \frac{(a_1,\ldots,a_r;q)_n}{(q;b_1,\ldots,b_s;q)_n} \left((-1)^n q^{n \choose 2}\right)^{1+s-r} x^n \, .
\end{align}

\section{$q$-Bessel functions}\label{sec:q-Bessel}

We summarize the definitions and the properties of the $q$-Bessel functions.
We follow the notations of~\cite{Gasper2004}.

\subsubsection*{$q$-Bessel functions}

Let $q \in [0,1)$.
There are three types of $q$-Bessel function,
\begin{subequations}
    \begin{align}
        J_\nu^{(1)}(x;q) & = \frac{(q^{\nu+1};q)_\infty}{(q;q)_\infty} \left(\frac{x}{2}\right)^\nu {}_2\phi_1(0,0;q^{\nu+1};q,-x^2/4) \, , \\
        J_\nu^{(2)}(x;q) & = \frac{(q^{\nu+1};q)_\infty}{(q;q)_\infty} \left(\frac{x}{2}\right)^\nu {}_0\phi_1(-;q^{\nu+1};q,-x^2 q^{\nu+1}/4) \, , \\
        J_\nu^{(3)}(x;q) & = \frac{(q^{\nu+1};q)_\infty}{(q;q)_\infty} \left(\frac{x}{2}\right)^\nu {}_1\phi_1(0;q^{\nu+1};q,qx^2/4) \, , \label{eq:qBessel_J3}
    \end{align}
\end{subequations}
which are reduced to the ordinary Bessel function in the limit $q \to 1$,
\begin{align}
    \lim_{q \to 1} J_\nu^{(j)}((1-q)x;q) = J_\nu(x) \, , \qquad j = 1, 2, 3 \, .
\end{align}
We remark the relation,
\begin{align}
    J_\nu^{(2)}(x;q) = \left(-\frac{x^2}{4};q\right)_\infty J_\nu^{(1)}(x;q) \, , \qquad |x| < 2 \, .
\end{align}

\subsubsection*{Modified $q$-Bessel functions}

We define the modified $q$-Bessel functions,
\begin{align}
    I_\nu^{(j)}(x;q) = \ee^{-\ii\pi\nu/2} J_\nu^{(j)}(x \ee^{\ii\pi/2};q) \, .
\end{align}
There is another expression in terms of the basic hypergeometric series~\cite{Ismail2018},
\begin{subequations}\label{eq:hypergeometric_I}
\begin{align}
    I_\nu^{(1)}(2x;q) & = \frac{x^\nu}{(x^2,q;q)_\infty} {}_1\phi_1(x^2;0;q,q^{\nu+1}) \, , \\
    I_\nu^{(2)}(2x;q) & = \frac{x^\nu}{(q;q)_\infty} {}_1\phi_1(x^2;0;q,q^{\nu+1}) \, .
\end{align}
\end{subequations}

\subsubsection*{Generating functions}

We have the generating functions of the $q$-Bessel functions,
\begin{subequations}
\begin{align}
    \left(\frac{xz}{2},-\frac{x}{2z};q\right)_\infty^{-1} & = \sum_{n \in \mathbb{Z}} z^n J_n^{(1)}(x;q) \, , \\    
    \frac{(qx/2z;q)_\infty}{(xz/2;q)_\infty} & = \sum_{n \in \mathbb{Z}} z^n J_n^{(3)}(x;q) \, .
\end{align}
\end{subequations}
For the modified $q$-Bessel functions, we have,
\begin{subequations}\label{eq:Bessel_gen_fn_mod}
\begin{align}
    \left(\frac{xz}{2},\frac{x}{2z};q\right)_\infty^{-1} & = \sum_{n \in \mathbb{Z}} z^n I_n^{(1)}(x;q) \, , \\ 
    \left(-\frac{zx}{2},-\frac{qx}{2z};q\right)_\infty & = \sum_{n \in \mathbb{Z}} z^n q^{n \choose 2} I_n^{(2)}(x;q) \, .
\end{align}
\end{subequations}

\subsubsection*{Recurrence relations}

The $q$-Bessel functions obey the following recurrence relations,
\begin{subequations}\label{eq:q-Bessel_rec_rel}
    \begin{align}
        q^{\nu} J_{\nu+1}^{(j)}(x;q) & = \frac{2}{x}(1 - q^\nu) J^{(j)}_\nu(x;q) - J^{(j)}_{\nu-1}(x;q) \, , \qquad j = 1, 2 \, , \\
        J^{(3)}_{\nu+1}(x;q) & = \left( \frac{2}{x} (1-q^\nu) + \frac{x}{2} \right) J^{(3)}_\nu(x;q) - J^{(3)}_{\nu-1}(x;q) \, , \label{eq:q-Bessel_rec_rel3}
    \end{align}
\end{subequations}
and
\begin{align}
    q^{\nu} I_{\nu+1}^{(j)}(x;q) & = - \frac{2}{x}(1 - q^\nu) I^{(j)}_\nu(x;q) + I^{(j)}_{\nu-1}(x;q) \, , \qquad j = 1, 2 \, .
\end{align}
We have the following $q$-shift relations,
\begin{subequations}
    \begin{align}
        J_\nu^{(1)}(xq^{\frac{1}{2}};q) & = q^{\pm \frac{\nu}{2}} \left( J_\nu^{(1)}(x;q) \pm \frac{x}{2} J_{\nu\pm 1}^{(1)}(x;q) \right) \, , \\
        I_\nu^{(1)}(xq^{\frac{1}{2}};q) & = q^{\pm \frac{\nu}{2}} \left( I_\nu^{(1)}(x;q) - \frac{x}{2} I_{\nu\pm 1}^{(1)}(x;q) \right) \, ,
    \end{align}
\end{subequations}
and hence we have
\begin{subequations}
    \begin{align}
        J_\nu^{(1)}(xq;q) & = \left( q^{\pm \nu} - \frac{x^2}{4} q^{-\frac{1}{2}} \right) J_\nu^{(1)}(x) \pm \frac{x}{2} \left( q^{\pm \nu} + q^{-\frac{1}{2}} \right) J_{\nu \pm 1}^{(1)}(x;q) \, , \\
        I_\nu^{(1)}(xq;q) & = \left( q^{\pm \nu} + \frac{x^2}{4} q^{-\frac{1}{2}} \right) I_\nu^{(1)}(x) - \frac{x}{2} \left( q^{\pm \nu} + q^{-\frac{1}{2}} \right) I_{\nu \pm 1}^{(1)}(x;q) \, .
    \end{align}
\end{subequations}

\bibliographystyle{ytamsalpha}
\bibliography{ref}

\end{document}